\newtheorem{lem}{Lemma}[section]
\newtheorem{cor}{Corollary}[section]
\newtheorem{pro}{Proposition}[section]
\newtheorem{teo}{Theorem}[section]
\theoremstyle{definition}
\newtheorem{defi}{Definition}[section]
\newcommand{\T}[1]{\Tilde{#1}}
\newcommand{\mc}[1]{\mathcal{#1}}
\newcommand{\mb}[1]{\mathbb{#1}}
\title{Equilibrium measures of manifolds without conjugate points having visibility covering}
\author[1]{Edhin Mamani\thanks{emamani@ufmg.br}}
\affil[1]{Instituto de Ciências Exatas ICEx, Universidade Federal de Minas Gerais, Av. Antônio Carlos 6627, Belo Horizonte, 31270-901, Brazil.}
\date{}
\begin{document}

\maketitle

\begin{abstract}
In this paper we study the equilibrium measures of geodesic flows of closed manifolds without conjugate points which have a visibility universal covering. Specifically, the uniqueness problem for Bowen potentials which are constants on some sets--intersection of horospheres--  and satisfy a weak pressure gap. Moreover, we study some ergodic properties of these measures such as the K-mixing property, weighted equidistribution of closed geodesics, the Gibbs property, large deviations and the entropy density of ergodic measures. Assuming, furthermore continuity of Green bundles, existence of a hyperbolic closed geodesic and a Gromov hyperbolic universal covering we prove that the above potentials always satisfy the weak pressure gap.    
\end{abstract}

\section{Introduction}
One of the most important examples of dynamical systems of geometric origin is the geodesic flow. These flows model the movement of a particle without any forces acting on it. It was the inspiration and the toy model for many developments in dynamical systems and ergodic theory. 

In the 1960's, Sinai, Ruelle, Bowen among others established a framework for the study of ergodic properties of dynamical systems called today Thermodynamics Formalism. Among other things, they proved the existence and uniqueness of the measure of maximal entropy for the geodesic flow of a compact manifold of variable negative curvature. The thermodynamic formalism provided a way to generalize the measure of maximal entropy, the so-called equilibrium measures or states. These measures are analogues of physical concepts coming from the Statistical Physics and Thermodynamics. In the 1980's, for compact manifolds of negative curvature, \cite{bowen72} proved the existence and uniqueness of equilibrium measures of Holder continuous potentials. 

The methods used by Bowen to show the uniqueness of equilibrium measures were based on the expansivity and the specification property of the flow. In 2014, Climenhaga and Thompsom extended these methods to non-uniform versions of expansivity and specification \cite{clim16}. Using this method Burns-Climenhaga-Fisher-Thompson proved the existence and uniqueness of equilibrium measures of Holder potentials for rank-1 manifolds of non-positive manifolds \cite{burns18}. On the other hand, using symbolic dynamics Lima and Poletti reproved the same result and showed the Bernoullicity of the measures \cite{lima25}. 

Another way to extend the analysis is to consider more general Riemannian metrics. The natural generalizations of Riemannian metrics of non-positive curvature are the metrics without focal points and metrics without conjugate points. These metrics have mostly negative curvature while allowing some zero and positive curvature. 

For the measure of maximal entropy Gelfert-Ruggiero, Climenhaga-Knieper-War and Mamani-Ruggiero \cite{gelf19,clim21,mam24} extended the uniqueness to the cases of no-focal points and no-conjugate points. For the case of equilibrium measures, there were extensions using the methods of Climenhaga and Thompson. 

On the other hand, the case of manifolds without conjugate points proved to be more complicated. For surfaces without conjugate points, Wu used the general results of Lima-Poletti which holds in a more general setting. In this context, assuming furthermore the continuity of Green bundles and a visibility universal covering Wu showed the existence and uniqueness of equilibrium measures of Holder potentials satisfying the generalized gap pressure \cite{wu24}. Following these works, we contribute to the case of manifolds without conjugate points as follows.
\begin{teo}\label{main}
Let $M$ be a closed manifold without conjugate points and with visibility universal covering. If $f:T_1M\to \mb{R}$ is a Bowen potential which is constant on classes and has the weak pressure gap then $f$ has a unique equilibrium measure $\mu$ and satisfies the following properties, 
\begin{enumerate}
    \item $\mu$ is K-mixing.
    \item The $f$-pressure of periodic prime orbits agree with topological pressure of $f$.
    \item Periodic prime orbits weighted by $f$ equidistribute to $\mu$.
    \item $\mu$ has the Gibbs property on expansive points.
    \item $\mu$ satisfies the level-1 large deviations principle for Bowen potentials which are constant on classes.
    \item The geodesic flow has a restricted entropy density of ergodic measures.
\end{enumerate}
\end{teo}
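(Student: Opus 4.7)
The plan is to prove uniqueness of the equilibrium measure $\mu$ via the non-uniform specification/decomposition machinery of Climenhaga-Thompson, as already implemented by Burns-Climenhaga-Fisher-Thompson in the nonpositive curvature setting and by Wu for surfaces without conjugate points, and then to deduce the six ergodic properties from uniqueness together with the structural properties of the decomposition. The key construction would be a partition of the space of finite orbit segments as $\mc{P}\cdot\mc{G}\cdot\mc{S}$, where $\mc{G}$ consists of segments whose endpoints lie in the expansive part of $T_1M$ (the complement of the singular set of points sitting on a nontrivial intersection of horospheres), and $\mc{P},\mc{S}$ are prefixes/suffixes lying in the singular set. The weak pressure gap is precisely the hypothesis $P(\mc{P}\cup\mc{S},f)<P(f)$ needed to control the singular contribution, while on the core $\mc{G}$ the visibility of the universal cover yields specification at every scale, and the Bowen property of $f$ (strengthened by constancy on horospheric classes) supplies the required distortion estimate. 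The abstract Climenhaga-Thompson theorem for flows then delivers existence and uniqueness of $\mu$.

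From uniqueness and the decomposition, the six properties follow by now-standard routes. Property (1) K-mixing is a consequence of the specification/decomposition framework with unique equilibrium state, adapting Ledrappier's Bernoullicity argument to the present non-uniform setting. For (2), orbit-counting estimates provided by specification on $\mc{G}$, together with the pressure gap bounding the contribution of periodic orbits that spend too long in the singular set, force the $f$-pressure of periodic prime orbits to equal $P(f)$; property (3) follows because any weak-$*$ accumulation of the weighted periodic orbit measures is an equilibrium state, hence equals $\mu$. Property (4) is the Gibbs lower/upper bound built into the Climenhaga-Thompson construction, restricted to points of the expansive core. Property (5) follows from uniqueness, upper semicontinuity of the entropy map, and Kifer's general criterion for the level-$1$ large deviation principle. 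Finally, (6) is obtained by approximating an arbitrary ergodic measure by periodic orbit measures produced by specification on $\mc{G}$.

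The main technical obstacle is the joint verification, on the core $\mc{G}$, of specification together with the Bowen bound for $f$. In the absence of conjugate points alone the Green bundles may fail to be continuous, horospheres are only $C^0$, and there is no uniform hyperbolicity, so classical Anosov shadowing is unavailable. The visibility property of the universal cover is exactly what produces a fellow-traveler estimate for geodesic rays, replacing shadowing up to a bounded reparameterization; and the hypothesis that $f$ be constant on horosphere-intersection classes is precisely what absorbs the residual parameter ambiguity of this shadowing into an additive constant that does not affect the Birkhoff integral, thereby yielding the Bowen bound on $\mc{G}$. Once this core estimate is in place, the rest of the proof is a careful application of the established non-uniform thermodynamic formalism.
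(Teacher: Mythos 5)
Your route is genuinely different from the paper's: the paper never runs the Climenhaga--Thompson decomposition machinery on the geodesic flow itself. Instead it collapses each class $\mc{I}(\xi)=\mc{F}^s(\xi)\cap\mc{F}^u(\xi)$ to a point, obtaining the quotient flow $\Psi$ on $X$, which (by Mamani--Ruggiero) is globally expansive with specification; a potential $f$ constant on classes descends to a Bowen potential $\T{f}$ on $X$, Franco's classical theorem gives a unique equilibrium measure $\nu$ for $\T{f}$, and the weak pressure gap forces any equilibrium measure of $f$ to live on $\mc{R}_0$, where $\chi$ is a bijection, so $\mu$ is determined by $\nu$. The six properties are then lifted from $X$ (Call--Thompson for K-mixing, periodic-orbit counting and equidistribution for $\T{f}$, the Gibbs property via saturated sets, Constantine--Lafont--Thompson for entropy density and level-1 large deviations), using throughout that $\mu(\mc{R}_0^c)=0$.

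However, as it stands your proposal has two genuine gaps. First, the hypothesis ``weak pressure gap'' in this paper is \emph{not} the Climenhaga--Thompson condition $P(\mc{P}\cup\mc{S},f)<P(f)$: it is only the statement that no equilibrium measure of $f$ is supported on the non-expansive set $\mc{R}_0^c$, which is strictly weaker than even the measure-theoretic pressure gap $\sup\{P_\mu(f,\Phi):\mathrm{Supp}(\mu)\subset\mc{R}_0^c\}<P(f)$, and a fortiori gives no control on the orbit-segment pressure of the prefix/suffix collections (nor on the expansivity pressure $P^{\perp}_{\exp}(\epsilon)$ that the CT theorem also requires). So the hypothesis you are given simply does not feed into the abstract theorem you want to invoke. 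Second, the step you flag as ``the main technical obstacle''---specification at all scales and the Bowen estimate on the core $\mc{G}$ for the geodesic flow itself---is precisely what is unavailable in this generality: without continuity of Green bundles and in arbitrary dimension there is no known shadowing/fellow-traveling argument strong enough for this (Wu needed continuous Green bundles even for surfaces), and the whole point of the paper is to bypass it by transferring the problem to the quotient, where expansivity and specification hold unconditionally. A smaller but real issue is item (6): full entropy density of ergodic measures is false here (Coudene--Schapira's flat-strip examples), so any argument must build in the restriction $\mu(\mc{R}_0^c)=0$, which your sketch does not do.
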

Comparing this theorem to Wu's result, note that we maintain the visibility hypothesis. This is because in higher dimensions the behavior can be very complicated and hence we assume some global geometry assumption in order to have some tools available. On the other hand, we remove the hypothesis of continuity of Green bundles assumed by Wu. However, we change the family of potentials for which the equilibrium measures are unique.

In the original Bowen work about the uniqueness of equilibrium measures, he was able to prove this property for potentials more general than Holder potentials, now called Bowen potentials. In manifolds of negative curvature, Holder potentials are Bowen potentials. In the non-uniformly case, the relation  between them is more diffuse but they both include the constant potentials, in particular the zero potential which corresponds to the measure of maximal entropy.

The other hypothesis imposed on the potentials considered in Theorem x seems to be a bit technical but it is common to the proof strategy that we follow. That is, we use some factor flow of the geodesic flow with uniqueness of equilibrium measures for Bowen potentials and then we somehow try to lift this property to the geodesic flow. Indeed, under our assumptions it is possible to build an equivalence relation on our phase space (unit tangent bundle) and hence define a quotient space. The geodesic flow induces a factor flow acting on the quotient space and this flow has the desired property. Since our potentials are defined on the unit tangent bundle, to induce potentials on the quotient space, we require that our potentials be constant on the equivalence classes. The use of a factor flow or factor map to show uniqueness of equilibrium measures is used in other contexts. In these contexts, it is also necessary to consider potentials constant on certain sets in order to apply the method.

Concerning the last hypothesis, for manifolds of nonpositive curvature, Burns-Climenhaga-Fisher-Thompson showed the pressure gap for potentials which are locally constant on a neighborhood of the singular set, i.e., the set where there is lack of hyperbolicity. In the setting of Theorem \ref{main}, we can prove a weak pressure gap in particular contexts as follows.
\begin{teo}
Let $M$ be a closed manifold without conjugate points and with Gromov hyperbolic universal covering. If $M$ has a hyperbolic closed geodesic and continuous Green bundles then every Bowen potential which is constant on classes, satisfy the weak pressure gap. Moreover, the unique equilibrium measure $\mu$ of $f$ is fully supported. In particular, $\mu$ satisfies all the properties of Theorem \ref{main}.
\end{teo}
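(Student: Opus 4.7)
The plan is to establish the weak pressure gap $P(\mathrm{Sing},f)<P(f)$, where $\mathrm{Sing}\subset T_1M$ denotes the non-expansive set, and then to derive the full support of $\mu$ from topological transitivity together with the weighted equidistribution of periodic orbits provided by item (3) of Theorem~\ref{main}. Under continuity of Green bundles, $\mathrm{Sing}$ coincides with the locus where the stable and unstable Green subbundles agree, which in turn equals the union of non-trivial equivalence classes.

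For the upper bound on $P(\mathrm{Sing},f)$, I would note that coincidence of the Green bundles on $\mathrm{Sing}$ forces the transverse Lyapunov exponents to vanish for every $\phi$-invariant probability $\mu$ with $\mu(\mathrm{Sing})=1$, and the Ruelle inequality then gives $h_\mu(\phi)=0$. The restricted variational principle therefore yields
\[
P(\mathrm{Sing},f)\;\le\;\sup\Bigl\{\int f\,d\mu : \mu\text{ is }\phi\text{-invariant with }\mu(\mathrm{Sing})=1\Bigr\},
\]
and since $f$ is constant on classes, $f|_{\mathrm{Sing}}$ descends to a well-defined function on the quotient $\mathrm{Sing}/{\sim}$, which I will use in the final comparison.

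For the lower bound on $P(f)$, I would exploit the hyperbolic closed geodesic $\gamma$. The Anosov closing lemma at $\gamma$, combined with shadowing provided by Gromov hyperbolicity, produces hyperbolic horseshoes $\Lambda_n\subset T_1M\setminus\mathrm{Sing}$ with $h_{\mathrm{top}}(\phi,\Lambda_n)\to h_{\mathrm{top}}(\phi)>0$. Applying Theorem~\ref{main} with $f\equiv 0$ (where the weak pressure gap is immediate, since singular measures have zero entropy while the horseshoe provides measures of positive entropy) yields a unique MME $\mu_0$ that cannot charge $\mathrm{Sing}$ and whose entropy satisfies $h_{\mu_0}(\phi)=h_{\mathrm{top}}(\phi)>0$. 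Then $P(f)\ge h_{\mu_0}(\phi)+\int f\,d\mu_0$, and the positive-entropy term should force the strict inequality with the upper bound above. The main obstacle I anticipate is aligning the $f$-integrals on both sides: a priori $\int f\,d\mu_0$ could be less than the supremum on the singular side by more than $h_{\mathrm{top}}(\phi)$. I expect to overcome this by using the factor-map construction from earlier in the paper, rewriting both pressures as pressures of induced potentials on the expansive quotient flow; constancy of $f$ on classes ensures the induced potentials agree on the images of $\mathrm{Sing}$ and of the full space, so the entropy contribution from $\Lambda_n$ translates directly into a pressure gap.

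Finally, the full support of $\mu$ is comparatively routine. Gromov hyperbolicity together with visibility yields topological transitivity of $\phi$ on $T_1M$, and the hyperbolic closed geodesic combined with specification-type behavior near $\gamma$ implies density of periodic orbits. By item (3) of Theorem~\ref{main}, $\mu$ is the weighted limit of averages along prime periodic orbits, so every nonempty open set contains such an orbit contributing positive $\mu$-mass, giving $\mathrm{supp}(\mu)=T_1M$ and therefore all properties listed in Theorem~\ref{main}.
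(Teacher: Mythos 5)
Your proposal aims at the classical pressure gap $P(\mathrm{Sing},f)<P(f)$, but the theorem only needs, and the paper only proves, the \emph{weak} pressure gap in the sense of Section \ref{uni}: no equilibrium measure of $f$ is supported on the non-expansive set $\mc{R}_0^c$. The step you yourself flag as the main obstacle --- comparing $\int f\,d\mu_0$ with the supremum of $\int f\,d\mu$ over singular measures --- is a genuine gap: constancy of $f$ on classes does not make these integrals comparable (the singular measures and $\mu_0$ live over different parts of the quotient, so nothing forces the entropy surplus from the horseshoes to dominate the difference of the $f$-integrals), and the classical gap for general Bowen potentials constant on classes is not claimed anywhere in the paper; even in nonpositive curvature it is only known for potentials locally constant near the singular set. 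The paper's actual argument involves no pressure estimates at all: by Theorem \ref{gromo}, Gromov hyperbolicity plus continuous Green bundles gives visibility, so Theorem \ref{main}'s framework applies; the hyperbolic closed geodesic then yields a nonempty open set contained in the expansive set $\mc{R}_0$; a dense orbit (Theorem \ref{v1}) makes $\mc{R}_0$ dense; by Lemmas \ref{sat}, \ref{gibs} and \ref{supp}, every ergodic equilibrium component of $f$ has full support, pulled back from the Gibbs/full-support property of the quotient equilibrium measure through saturated open sets, hence it charges the open subset of $\mc{R}_0$ and, being ergodic with $\mc{R}_0$ invariant, is supported on $\mc{R}_0$. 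That is exactly the weak pressure gap, and Theorems \ref{toe} and \ref{int} then give uniqueness, ergodicity and full support.

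Two further steps of your proposal would also fail as written. The claim that invariant measures on $\mathrm{Sing}$ have zero entropy via coincidence of Green bundles and the Ruelle inequality is not established in this generality (and is not needed), and the identification of $\mathrm{Sing}$ with the union of nontrivial classes is likewise unproved here. More seriously, your route to full support is invalid: from $\mu_{T,\epsilon}\to\mu$ in the weak$^*$ topology and $\mu_{T,\epsilon}(U)>0$ for every nonempty open $U$ one cannot conclude $\mu(U)>0$, since the portmanteau inequality for open sets reads $\liminf_{T}\mu_{T,\epsilon}(U)\geq \mu(U)$, which is the wrong direction. Full support must instead be obtained as in Lemma \ref{supp}, from the Gibbs property of the quotient equilibrium measure together with density of $\mc{R}_0$ and saturated open sets.
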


\section{Preliminaries}
\subsection{Closed manifolds without conjugate points and horospheres}\label{m}
We introduce the general background and main notations we will use throughout the paper. Let $(M,g)$ be a $C^{\infty}$ closed (compact and boundaryless) Riemannian manifold, $TM$ be the tangent bundle and $T_1M$ be the unit tangent bundle. Consider the universal covering $\tilde{M}$ of $M$, the unit tangent bundle $T_1\T{M}$ of $\T{M}$ and the covering maps $\pi:\tilde{M}\to M$ and $d\pi:T_1\tilde{M}\to T_1M$. The covering $\tilde{M}$ is a complete Riemannian manifold with the pullback metric $\tilde{g}$. A manifold $M$ has no conjugate points if the exponential map $\exp_p$ is non-singular at every $p\in M$. In particular, $\exp_p$ is a covering map for every $p\in M$ (p. 151 of \cite{doca92}).

A geodesic is a smooth curve $\gamma\subset M$ satisfying $\nabla_{\dot{\gamma}}\dot{\gamma}=0$ where $\nabla$ is the covariant derivative induced by $g$. For every $\theta=(p,v)\in T_1M$, $\gamma_{\theta}$ is the unique geodesic satisfying the initial conditions $\gamma_{\theta}(0)=p$ and $\dot{\gamma}_{\theta}(0)=v$. The geodesic flow $\phi_t$ on $T_1M$ is defined by 
\[ \phi: \mathbb{R}\times T_1M\to T_1M, \qquad (t,\theta)\mapsto \phi_t(\theta)=\dot{\gamma}_{\theta}(t). \]

There exists a Riemannian metric $h$ on $TM$ induced by $g$ \cite{pater97}. We call $h$ Sasaki metric when restricted to $T_1M$ and call Sasaki distance $d_s$ the induced Riemannian distance on $T_1M$. Sasaki metric gives a decomposition of the tangent bundle of $T_1M$. For every $\theta\in T_1M$, denote by $G(\theta)\subset T_{\theta}T_1M$ the subspace tangent to the geodesic flow at $\theta$. Let $N(\theta)\subset T_{\theta}T_1M$ be the subspace orthogonal to $G(\theta)$ with respect to Sasaki metric. Thus, we have the following orthogonal decompositions
\[  T_{\theta}T_1M=N(\theta)\oplus G(\theta) \quad \text{ and }\quad N(\theta)=H(\theta)\oplus V(\theta).  \]
So, every $\xi\in N(\theta)$ has an orthogonal decomposition $\xi=(\xi_h,\xi_v)\in H(\theta)\oplus V(\theta)$.

Now, every $\theta\in T_1\T{M}$ has two important associated sets $\mc{\T{F}}^s(\theta)$ and $\mc{\T{F}}^u(\theta)$ included in $T_1\T{M}$ (\cite{esch77,pesin77}). With these sets we define the center stable and center unstable sets of $\theta$ by 
\[  \mathcal{\tilde{F}}^{cs}(\theta)=\bigcup_{t\in \mathbb{R}} \mathcal{\tilde{F}}^s(\phi_t(\theta))  \quad \text{ and }\quad \mathcal{\tilde{F}}^{cu}(\theta)=\bigcup_{t\in \mathbb{R}} \mathcal{\tilde{F}}^u(\phi_t(\theta)).  \]
Moreover, we collect them in families $\mathcal{\tilde{F}}^s=\{\mathcal{\tilde{F}}^s(\theta) \}_{\theta\in T_1\tilde{M}}$ and $\mathcal{\tilde{F}}^u=\{ \mathcal{\tilde{F}}^u(\theta) \}_{\theta\in T_1\tilde{M}}$.
\begin{pro}[\cite{esch77,pesin77}]\label{horo}
Let $M$ be a closed $n$-manifold without conjugate points. Then, for every $\theta\in T_1\tilde{M}$, $\mathcal{\tilde{F}}^s(\theta)$ and $\mathcal{\tilde{F}}^u(\theta)$ are Lipschitz continuous embedded $(n-1)$-submanifolds of $T_1\T{M}$ invariant by the geodesic flow: $\phi_t(\mathcal{\tilde{F}}^*(\theta))=\mathcal{\tilde{F}}^*(\phi_t(\theta))$ for every $t\in \mb{R}$ and for $*=s,u$.
\end{pro}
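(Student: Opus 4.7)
The plan is to realise both $\mathcal{\tilde{F}}^s(\theta)$ and $\mathcal{\tilde{F}}^u(\theta)$ as graphs of Lipschitz unit vector fields defined over the stable and unstable horospheres through $\theta$, and to read off the claimed regularity and flow invariance from the regularity of the associated Busemann functions. First, for $\theta=(p,v)\in T_1\tilde{M}$ I would set
\[ b_\theta(x)=\lim_{t\to +\infty}\bigl(d_{\tilde g}(x,\gamma_\theta(t))-t\bigr), \]
and note that the limit exists because $t\mapsto d_{\tilde g}(x,\gamma_\theta(t))-t$ is non-increasing by the triangle inequality and bounded below by $-d_{\tilde g}(x,p)$. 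From this one immediately reads off that $b_\theta$ is $1$-Lipschitz, that $b_\theta(p)=0$, and that $b_\theta(\gamma_\theta(s))=-s$ for every $s\in\mathbb{R}$.

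The key analytic step, and the main obstacle, is to upgrade $b_\theta$ to a $C^{1,1}$ function with $\|\nabla b_\theta\|\equiv 1$. Here the absence of conjugate points enters crucially: because $\exp_{\gamma_\theta(t)}$ is non-singular, the spheres $S_t=\{x:d_{\tilde g}(x,\gamma_\theta(t))=t\}$ are smooth embedded hypersurfaces of $\tilde{M}$, and their second fundamental forms satisfy a matrix Riccati equation along each geodesic emanating radially from $\gamma_\theta(t)$. The absence of conjugate points yields uniform two-sided bounds on these Riccati solutions, following the arguments of Eschenburg and Pesin in \cite{esch77,pesin77}, so the inward unit normal fields of $S_t$ converge as $t\to+\infty$ to a Lipschitz unit vector field on $\tilde{M}$. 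This limit field is precisely $-\nabla b_\theta$, which gives the desired $C^{1,1}$ regularity of the Busemann function.

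With $b_\theta$ of class $C^{1,1}$ and nonvanishing gradient, the horosphere $H^s(\theta):=b_\theta^{-1}(0)$ is an embedded $(n-1)$-dimensional Lipschitz submanifold of $\tilde{M}$ by the implicit function theorem, and I would then identify
\[ \mathcal{\tilde{F}}^s(\theta)=\{-\nabla b_\theta(q):q\in H^s(\theta)\}\subset T_1\tilde{M}, \]
which is the graph of the Lipschitz section $q\mapsto -\nabla b_\theta(q)$ over $H^s(\theta)$ and hence itself a Lipschitz embedded $(n-1)$-submanifold of $T_1\tilde{M}$. The unstable leaf $\mathcal{\tilde{F}}^u(\theta)$ is obtained from the same construction applied to $-\theta$, equivalently by taking $t\to-\infty$ in the Busemann limit. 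Finally, the flow invariance is a direct consequence of the translation identity $b_{\phi_s(\theta)}=b_\theta-s$, which follows from $\gamma_{\phi_s(\theta)}(\cdot)=\gamma_\theta(\cdot+s)$: this gives $H^s(\phi_s\theta)=b_\theta^{-1}(s)$, and feeding this back into the gradient parametrisation yields $\phi_s(\mathcal{\tilde{F}}^s(\theta))=\mathcal{\tilde{F}}^s(\phi_s\theta)$, with the analogous identity for $\mathcal{\tilde{F}}^u$.
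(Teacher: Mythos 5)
The paper does not prove this proposition at all: it is imported by citation from Eschenburg and Pesin, and your construction (monotone Busemann limit, uniform Riccati bounds from the absence of conjugate points giving $C^{1,1}$ regularity of $b_\theta$, realization of $\mathcal{\tilde{F}}^s(\theta)$ as the graph of the Lipschitz field $-\nabla b_\theta$ over the horosphere, and the translation identity for flow invariance) is exactly the standard argument of those references, so your proposal is correct and follows essentially the same route as the cited sources. One small slip: with your normalization $b_\theta(\gamma_\theta(s))=-s$ the identity is $b_{\phi_s\theta}=b_\theta+s$, hence $H^s(\phi_s\theta)=b_\theta^{-1}(-s)$ rather than $b_\theta^{-1}(s)$; since geodesics tangent to $-\nabla b_\theta$ decrease $b_\theta$ at unit speed, the invariance $\phi_s(\mathcal{\tilde{F}}^s(\theta))=\mathcal{\tilde{F}}^s(\phi_s\theta)$ still follows verbatim.
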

We consider analogous objects in $T_1M$. For every $\theta\in T_1M$ and every lift $\T{\theta}\in T_1\T{M}$ of $\theta$ define 
\[  \mathcal{F}^*(\theta)=d\pi (\mathcal{\tilde{F}}^*(\tilde{\theta}))\subset T_1M \quad \text{ and }\quad \mathcal{F}^*=d\pi (\mathcal{\tilde{F}}^*), \quad *=s,u. \]

\subsection{Visibility manifolds}\label{v}
Let $M$ be a simply connected Riemannian manifold without conjugate points. We say that $M$ is a visibility manifold if for every $z\in M$ and every $\epsilon>0$ there exists $R(\epsilon, z)>0$ such that if $x,y\in M$ with $d(z,[x,y])>R(\epsilon,z)$ then $\sphericalangle_z(x,y)<\epsilon$, where $[x,y]$ is the unique geodesic segment joining $x$ to $y$ and $\sphericalangle_z(x,y)$ is the angle at $z$ formed by $[z,x]$ and $[z,y]$. If $R(\epsilon,z)$ does not depend on $z$ then $M$ is called a uniform visibility manifold. Let us note that a visibility universal covering of a compact manifold without conjugate points is actually a uniform visibility manifold \cite{eber72}. 

Visibility manifolds are important because of the good dynamical properties of their geodesic flows. Recall that a foliation is minimal if its leaves are dense.
\begin{teo}[\cite{eber72,eber73neg2}]\label{v1}
Let $M$ be a closed manifold without conjugate points, with visibility universal covering and with geodesic flow $\phi_t$. Then
\begin{enumerate}
    \item The families $\mathcal{F}^s$ and $\mathcal{F}^u$ are continuous minimal foliations of $T_1M$ invariant by $\phi_t$.
    \item The geodesic flow is topologically mixing.
\end{enumerate}
\end{teo}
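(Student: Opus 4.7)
My plan is to work at the geometric boundary $\partial \T{M}$ of the universal cover and exploit the Busemann function representation of horospheres. For $\theta \in T_1\T{M}$, the leaf $\T{\mc{F}}^s(\theta)$ is the unit normal field along the stable horosphere, equivalently the level set of the Busemann function $b_{\theta}(x) = \lim_{t\to +\infty}(\T{d}(x,\gamma_\theta(t)) - t)$, with a symmetric picture for $\T{\mc{F}}^u$.

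First I would establish continuity of the foliations. The Busemann function is a decreasing limit of $1$-Lipschitz functions, and uniform visibility forces this convergence to be uniform on compact subsets and stable under perturbation of $\theta$. Hence $\theta \mapsto b_{\theta}$ is continuous in the compact-open topology, and the horospheres together with their unit normals depend continuously on $\theta$. Equivariance of the construction under the deck action passes continuity through $d\pi$ down to $T_1M$.

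Next I would prove minimality of $\mc{F}^s$. Fix $\theta, \eta \in T_1M$ with lifts $\T{\theta}, \T{\eta}$ and a neighborhood $U \ni \theta$. Uniform visibility together with compactness of $M$ implies that the $\pi_1(M)$-orbit of every point on $\partial \T{M}$ is dense (Eberlein's duality argument). Pick a deck transformation $T$ such that the forward endpoint of $T\T{\eta}$ approximates that of $\T{\theta}$. Since the stable foliation is $\pi_1(M)$-equivariant and continuous, the leaf $\T{\mc{F}}^s(T\T{\eta}) = T \cdot \T{\mc{F}}^s(\T{\eta})$ contains a point near $\T{\theta}$; projecting via $d\pi$ produces the desired point of $\mc{F}^s(\eta) \cap U$, and the unstable case is symmetric.

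Finally, for topological mixing I would combine minimality with a visibility-based closing argument \`a la Eberlein. Given open $U,V \subset T_1M$ and $t$ large, minimality of $\mc{F}^u$ and $\mc{F}^s$ provides an orbit segment from near $\theta \in U$ to near $\eta \in V$ for some time $s \approx t$; the uniform visibility lemma then permits a small perturbation of the initial vector that lands exactly in $U$ at time zero and arbitrarily close to $\eta$ at time $t$, giving $\phi_t(U) \cap V \neq \emptyset$. The hardest step will be this quantitative closing: without any hyperbolic rate available, the only quantitative substitute is the uniform visibility modulus coming from compactness of $M$, and making it do the work of exponential contraction --- sufficient to upgrade transitivity to mixing --- is the technical heart of the proof.
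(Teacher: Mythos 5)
This theorem is imported from Eberlein's papers \cite{eber72,eber73neg2}; the paper gives no proof, so your sketch has to stand on its own, and it has genuine gaps at each of its three stages. First, continuity: the functions $x\mapsto d(x,\gamma_\theta(t))-t$ are a \emph{decreasing} family only when distance functions are convex, i.e.\ in nonpositive curvature; without conjugate points that monotonicity is simply unavailable, and, more seriously, the joint continuity $\theta\mapsto b_\theta$ (equivalently, continuity of $\mathcal{F}^s,\mathcal{F}^u$) is exactly the delicate point for manifolds without conjugate points --- it can fail or is unknown in general and in the visibility setting it rests on the divergence of geodesic rays in $\tilde M$, itself a nontrivial consequence of uniform visibility established by Eberlein. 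Your phrase ``uniform visibility forces this convergence to be uniform \dots and stable under perturbation of $\theta$'' asserts the conclusion rather than proving it.

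Second, minimality: choosing a deck transformation $T$ so that the forward endpoint of $T\tilde\eta$ approximates that of $\tilde\theta$ only produces a leaf whose vectors are asymptotic to a nearby boundary point; it gives no control on where the corresponding horosphere sits, so the point of $\mathcal{F}^s(T\tilde\eta)$ you obtain may have footpoint far from that of $\tilde\theta$. At best this argument yields density of the center-stable sets $\mathcal{F}^{cs}$; removing the flow direction to get minimality of the strong foliation $\mathcal{F}^s$ is the real content and requires controlling both endpoints (duality of the $\pi_1$-action, axial isometries), not just the forward one. Third, mixing: your proposal concedes that the decisive step is missing, and the mechanism suggested does not work as stated --- minimality of the foliations does not by itself furnish orbit segments of prescribed approximate length $t$ from near $U$ to near $V$, and ``making the uniform visibility modulus do the work of exponential contraction'' is a hope, not an argument. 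Eberlein's actual route to item (2) goes through the duality condition, the structure of the nonwandering set, and the relation between topological mixing and density of strong stable leaves; without some substitute for that machinery the proposal establishes at most transitivity-type statements, not the theorem.
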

Similarly, the families $\mc{\T{F}}^s$ and $\mc{\T{F}}^u$ are continuous invariant foliations of $T_1\tilde{M}$ usually called stable and unstable horospherical foliations. Hence $\mathcal{\tilde{F}}^s(\theta)$ and $\mathcal{\tilde{F}}^u(\theta)$ are respectively the stable and unstable horospherical leaf of $\theta\in T_1\T{M}$. 

We now consider intersections between horospherical leaves. For every $\theta\in T_1M$ and every lift $\T{\theta}\in T_1\T{M}$ of $\theta$, define the intersection sets $\mathcal{I}(\theta)=\mathcal{F}^s(\theta)\cap \mathcal{F}^u(\theta)$ and $\mathcal{\T{I}}(\T{\theta})=\mathcal{\T{F}}^s(\T{\theta})\cap \mathcal{\T{F}}^u(\T{\theta})$. The covering map $d\pi$ maps $\mathcal{\T{I}}(\T{\theta})$ onto $\mathcal{I}(\theta)$. We say that $\theta\in T_1M$ is expansive if $\mathcal{I}(\theta)=\{ \theta\}$ otherwise $\theta$ is non-expansive. The expansive set is denoted by
\[ \mathcal{R}_0=\{ \theta\in T_1M: \mathcal{I}(\theta)=\{ \theta\} \}.  \]
\begin{pro}[\cite{riff18}]\label{morse}
Let $M$ be a closed manifold without conjugate points and with visibility universal covering $\T{M}$. Then, there exists a uniform constant $Q(M)>0$ such that for every $\xi\in T_1\T{M}$, $\mc{\T{I}}(\xi)$ is a compact connected set with $Diam(\mc{\T{I}}(\xi))\leq Q(M)$.     
\end{pro}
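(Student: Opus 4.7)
The plan is to exploit uniform visibility of $\T{M}$ to establish a Morse-type comparison lemma for bi-asymptotic geodesics, from which the diameter bound and compactness follow quickly, while connectedness will require extra work with the horospherical foliations. Set $\eta^{\pm}:=\gamma_{\T{\theta}}(\pm\infty)$ for the endpoints at infinity of $\gamma_{\T{\theta}}$. Since $\mc{\T{F}}^s(\T{\theta})$ and $\mc{\T{F}}^u(\T{\theta})$ consist of vectors forward-, respectively backward-asymptotic to $\gamma_{\T{\theta}}$, every $\xi\in\mc{\T{I}}(\T{\theta})$ satisfies $\gamma_\xi(\pm\infty)=\eta^{\pm}$. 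Thus the problem reduces to controlling the set of geodesics of $\T{M}$ that are bi-asymptotic to $\gamma_{\T{\theta}}$.

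For the diameter bound I would argue by contradiction. Suppose that the footpoint of some $\xi\in\mc{\T{I}}(\T{\theta})$ lies at distance larger than a prescribed $R$ from $z:=\gamma_{\T{\theta}}(0)$. Picking $x_n,y_n\in\gamma_\xi$ with $x_n\to\eta^-$ and $y_n\to\eta^+$, the segments $[x_n,y_n]$ are contained in $\gamma_\xi$, so they stay at distance larger than $R$ from $z$. Uniform visibility (available on $\T{M}$ by \cite{eber72}) then forces $\sphericalangle_z(x_n,y_n)<\epsilon$ provided $R=R(\epsilon,M)$ is large enough, so in the limit $\sphericalangle_z(\eta^-,\eta^+)\leq\epsilon$. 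Since $z$ lies on $\gamma_{\T{\theta}}$ which joins $\eta^-$ and $\eta^+$, this angle must equal $\pi$, contradicting $\epsilon<\pi$. A uniform bound on footpoint distance is thereby obtained, and it upgrades to the Sasaki bound $d_s(\T{\theta},\xi)\leq Q(M)$ because at two close footpoints the unique unit vectors pointing to a common ideal endpoint $\eta^+$ vary continuously, again by visibility. This yields $\text{Diam}(\mc{\T{I}}(\T{\theta}))\leq Q(M)$ uniformly in $\T{\theta}$.

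Compactness is then immediate. The leaves $\mc{\T{F}}^s(\T{\theta})$ and $\mc{\T{F}}^u(\T{\theta})$ are closed in $T_1\T{M}$ as level sets of Busemann-type functions, hence $\mc{\T{I}}(\T{\theta})$ is closed; the diameter bound makes it Sasaki-bounded; and applying Hopf--Rinow to the complete Riemannian manifold $(T_1\T{M},d_s)$ gives compactness.

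The main obstacle is connectedness. My approach would be to note that the footpoint projection $(q,v)\mapsto q$ restricted to $\mc{\T{I}}(\T{\theta})$ is injective, since visibility yields at each $q\in\T{M}$ a unique unit vector whose forward geodesic limits at $\eta^+$; being continuous and proper, this restriction is a homeomorphism onto its image $I(\T{\theta})\subset\T{M}$, so it suffices to prove $I(\T{\theta})$ is connected. Given $\xi_1,\xi_2\in\mc{\T{I}}(\T{\theta})$, the idea is to interpolate locally using the continuous product structure of the horospherical foliations from Theorem \ref{v1}: a small transversal motion inside $\mc{\T{F}}^u(\T{\theta})$ changes the forward endpoint continuously, and the preimage of the fixed pair $(\eta^-,\eta^+)$ under the endpoint map should be locally connected. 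Patching finitely many such local pieces, with the diameter bound ensuring that the path remains in the ball of radius $Q(M)$ around $\T{\theta}$, produces a continuous path in $\mc{\T{I}}(\T{\theta})$ between $\xi_1$ and $\xi_2$. The delicate point here is that without nonpositive curvature one lacks flat-strip rigidity, so the local connectedness of fibers of the endpoint map must be extracted carefully from the visibility hypothesis and from the topological properties of horospheres provided by Proposition \ref{horo}.
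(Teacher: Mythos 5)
The paper itself offers no proof of this proposition (it is imported verbatim from \cite{riff18}), so your attempt has to stand on its own, and as written it has two genuine gaps. First, the contradiction setup for the diameter bound does not work: you assume only that the \emph{footpoint} $\gamma_\xi(0)$ is at distance greater than $R$ from $z$, and then assert that the segments $[x_n,y_n]\subset\gamma_\xi$ ``stay at distance larger than $R$ from $z$''. That inference is false — nothing prevents $\gamma_\xi$ from passing close to $z$ at some other time even though $\gamma_\xi(0)$ is far away. The visibility axiom, applied correctly, runs the other way: since $\eta^-\neq\eta^+$, the angles $\sphericalangle_z(x_n,y_n)$ stay bounded away from $0$, so for large $n$ the segment $[x_n,y_n]$ must meet the ball $B(z,R)$, i.e.\ $\gamma_\xi$ comes within $R$ of $z$ at some time $t_0$. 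To conclude you still need $|t_0|$ bounded, and this is exactly the step your sketch omits: it uses that the footpoint of $\xi$ lies on \emph{both} horospheres through $z$, so the Busemann functions $b^{\pm}_{\T{\theta}}$ vanish there, together with $b^{+}_{\T{\theta}}(\gamma_\xi(t))=-t$ and the $1$-Lipschitz property of $b^{+}_{\T{\theta}}$, which give $|t_0|\leq R$ and hence $d(\gamma_\xi(0),z)\leq 2R$. (Your further claim that the limit angle ``must equal $\pi$'' also presupposes uniqueness of asymptotic rays from $z$, which is not available in this generality; a positive lower bound on the angle, coming from $\eta^-\neq\eta^+$ in the cone topology of a uniform visibility manifold, is what one actually uses.)

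Second, connectedness — the genuinely hard part of the statement — is not proved. Your plan reduces it to ``local connectedness of the fibers of the endpoint map'' under small transversal motions in $\mc{\T{F}}^u(\T{\theta})$, but that local connectedness is precisely the nontrivial content and is nowhere established; without nonpositive curvature there is no flat-strip rigidity, asymptotic rays from a point need not be unique, and Theorem \ref{v1} only gives continuity of the foliations, not a product structure on the fibers of the endpoint map, so ``patching finitely many local pieces'' is a hope rather than an argument. A smaller inaccuracy in the same step: injectivity of the footpoint projection on $\mc{\T{I}}(\T{\theta})$ should not be justified by ``visibility yields a unique unit vector at $q$ asymptotic to $\eta^+$'' (unknown in this generality); it holds because each leaf $\mc{\T{F}}^s(\T{\theta})$ is a graph over its horosphere, the vector at a footpoint being $-\nabla b^{s}_{\T{\theta}}$. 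The compactness argument (closed leaves, Sasaki-bounded set, Hopf--Rinow) is fine once the diameter bound is repaired, but the diameter step needs the Busemann correction above and the connectedness claim needs an actual proof, for which the paper simply defers to \cite{riff18}.
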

In the visibility setting there is also a kind of ``local product". 
\begin{teo}[\cite{eber72}]\label{conec}
Let $M$ be a closed manifold without conjugate points and with visibility universal covering $\Tilde{M}$. Then, given $\xi,\xi'\in T_1\tilde{M}$ with $\xi\not\in \mathcal{\tilde{F}}^{cu}(\xi')$ there exist $\eta,\eta'\in T_1\tilde{M}$ with
\[ \mathcal{\tilde{F}}^s(\xi)\cap \mathcal{\tilde{F}}^{cu}(\xi')=\mathcal{\tilde{I}}(\eta) \quad \text{ and }\quad  \mathcal{\tilde{F}}^s(\xi')\cap \mathcal{\tilde{F}}^{cu}(\xi)=\mathcal{\Tilde{I}}(\eta').   \]
\end{teo}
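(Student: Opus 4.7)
The strategy is to push the problem out to the sphere at infinity of $\tilde{M}$ and then invoke the visibility axiom. In the setting of no conjugate points with visibility covering, $v\in\mathcal{\tilde{F}}^s(\xi)$ forces the underlying geodesic $\gamma_v$ to share its forward ideal endpoint with $\gamma_\xi$, and after taking the flow saturation, $v\in\mathcal{\tilde{F}}^{cu}(\xi')$ amounts exactly to $\gamma_v(-\infty)=\gamma_{\xi'}(-\infty)$. Consequently $\mathcal{\tilde{F}}^s(\xi)\cap\mathcal{\tilde{F}}^{cu}(\xi')$ is the set of unit vectors whose underlying geodesic runs between the prescribed pair of ideal points $p:=\gamma_{\xi'}(-\infty)$ and $q:=\gamma_\xi(+\infty)$, constrained so that the footpoint sits on the specific stable horosphere through the footpoint of $\xi$.

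The hypothesis $\xi\notin\mathcal{\tilde{F}}^{cu}(\xi')$ gives $\gamma_\xi(-\infty)\ne p$; together with uniform visibility, one checks that $p\ne q$, for otherwise the intersection is empty and there is nothing to prove. Next I would apply the Eberlein visibility property from Section \ref{v}: any two distinct points at infinity of $\tilde{M}$ are joined by a geodesic. This produces $\eta\in T_1\tilde{M}$ with $\gamma_\eta(-\infty)=p$ and $\gamma_\eta(+\infty)=q$, and by the correct choice of footpoint one may arrange $\eta\in\mathcal{\tilde{F}}^s(\xi)\cap\mathcal{\tilde{F}}^{cu}(\xi')$.

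Then I would show $\mathcal{\tilde{F}}^s(\xi)\cap\mathcal{\tilde{F}}^{cu}(\xi')=\mathcal{\tilde{I}}(\eta)=\mathcal{\tilde{F}}^s(\eta)\cap\mathcal{\tilde{F}}^u(\eta)$. The inclusion $\supseteq$ is structural: $\mathcal{\tilde{F}}^s(\eta)=\mathcal{\tilde{F}}^s(\xi)$ because both are stable horospherical leaves based at $q$ through the same footpoint, and $\mathcal{\tilde{F}}^u(\eta)\subseteq\mathcal{\tilde{F}}^{cu}(\xi')$ because both have backward ideal point $p$. For $\subseteq$, any $v$ in the intersection has $\gamma_v$ bi-asymptotic to $(p,q)$; by the flow-invariance of $\mathcal{\tilde{F}}^u$ there is some $s\in\mathbb{R}$ with $\phi_s v\in\mathcal{\tilde{F}}^u(\eta)$, and the stable-horosphere condition $v\in\mathcal{\tilde{F}}^s(\xi)=\mathcal{\tilde{F}}^s(\eta)$ then forces $s=0$, placing $v$ in $\mathcal{\tilde{I}}(\eta)$. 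The symmetric statement for $\mathcal{\tilde{F}}^s(\xi')\cap\mathcal{\tilde{F}}^{cu}(\xi)$ follows by swapping the roles of $\xi$ and $\xi'$.

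The main obstacle is the $\subseteq$ inclusion. In negative curvature, bi-asymptotic geodesics coincide up to reparametrization, but without conjugate points they may form a nontrivial family, and one must argue that the horospherical footpoint constraints and the time parameter $s$ line up exactly. Here I would exploit Proposition \ref{morse} (connectedness and uniformly bounded diameter of $\mathcal{\tilde{I}}(\cdot)$) together with the rigidity of horospheres coming from uniform visibility to rule out any nontrivial vertical shift. Once this matching is established, the two inclusions close up and the theorem follows.
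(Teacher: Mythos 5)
This statement is quoted in the paper from \cite{eber72} without proof, so your outline has to stand on its own, and as written it does not. The decisive step, the inclusion $\mathcal{\tilde{F}}^s(\xi)\cap \mathcal{\tilde{F}}^{cu}(\xi')\subseteq \mathcal{\T{I}}(\eta)$, is exactly where you stop arguing. Having written $v\in\mathcal{\tilde{F}}^s(\eta)$ and $v\in\mathcal{\tilde{F}}^u(\phi_s(\eta))$ for some $s$, what you must show is $s=0$; in Busemann terms, that $b^+_\eta+b^-_\eta$ vanishes at the footpoint of $v$ whenever $b^+_\eta$ does and $\gamma_v$ is bi-asymptotic to $\gamma_\eta$. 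In nonpositive curvature this is the flat strip theorem; without conjugate points it is a genuinely nontrivial statement, and Proposition \ref{morse} does not deliver it: connectedness and bounded diameter of the sets $\mc{\T{I}}(\cdot)$ say nothing about whether the stable leaf of $\xi$ could meet two distinct unstable leaves $\mathcal{\tilde{F}}^u(\phi_{s_1}\eta)\neq\mathcal{\tilde{F}}^u(\phi_{s_2}\eta)$ inside the same center-unstable set. ``Rigidity of horospheres coming from uniform visibility'' is a placeholder, not an argument, and it is precisely the content of the theorem beyond the elementary fact that two distinct ideal points are joined by a geodesic.

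There are two further gaps. First, your dictionary between leaves and ideal endpoints --- that $\mathcal{\tilde{F}}^{cu}(\xi')$ is exactly the class of vectors with backward endpoint $\gamma_{\xi'}(-\infty)$, and that a geodesic from $p$ to $q$, suitably parametrized, yields a vector actually lying on the leaf $\mathcal{\tilde{F}}^s(\xi)$ (i.e.\ is an integral curve of the Busemann gradient of $\xi$, not merely forward asymptotic to $\gamma_\xi$) --- is assumed throughout but never justified. For manifolds without conjugate points the horospherical leaves are defined via Busemann limits, and these identifications rely on divergence of geodesic rays and continuity properties that hold under visibility but must be invoked or proved; they carry most of the weight of the cited result. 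Second, non-emptiness: you say that if $p=q$ ``the intersection is empty and there is nothing to prove,'' but the conclusion asserts the intersection equals some $\mc{\T{I}}(\eta)$, which is never empty, so emptiness would contradict the statement rather than trivialize it. Under your own dictionary the hypothesis $\xi\notin\mathcal{\tilde{F}}^{cu}(\xi')$ only gives $\gamma_\xi(-\infty)\neq\gamma_{\xi'}(-\infty)$ and does not exclude $\gamma_\xi(+\infty)=\gamma_{\xi'}(-\infty)$ (consider $\xi'$ near $-\xi$), so the claim ``one checks that $p\neq q$'' is unsupported as written; you need to derive it from the precise hypothesis in Eberlein's formulation or treat that case explicitly.
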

These intersections are sometimes referred as the heteroclinic connections of the geodesic flow. 

\subsection{Jacobi fields and Green bundles}\label{jaco}
Let $(M,g)$ be a closed $n$-manifold without conjugate points, $\theta\in T_1M$ and $\gamma_{\theta}$ be the geodesic induced by $\theta$. A vector field $J$ along $\gamma_{\theta}$ is called a Jacobi field if $J$ satisfies the Jacobi equation
\[ J''(t)+R(\dot{\gamma}_{\theta}(t),J(t))\dot{\gamma}_{\theta}(t)=0, \]
where $R$ is the curvature tensor induced by $g$. Clearly a Jacobi field is totally determined by its initial conditions. Moreover, $J(t)$ is orthogonal to $\dot{\gamma}_{\theta}(t)$ for all $t\in \mathbb{R}$ if both $J(0)$ and $J'(0)$ are orthogonal to $\theta$. In this case, $J$ is called an orthogonal Jacobi field. We denote by $\mathcal{J}_{\theta}$ the vector subspace of orthogonal Jacobi fields along $\gamma_{\theta}$. For every $\xi\in T_{\theta}T_1M$, let $J_{\xi}$ be the Jacobi field on $\gamma_{\theta}$ satisfying $J_{\xi}(0)=\xi_h$ and $J'_{\xi}(0)=\xi_v$, where $\xi=(\xi_h,\xi_v)$ as in Subsection \ref{m}. By above, $J_{\xi}\in \mathcal{J}_{\theta}$ if and only if $\xi\in N(\theta)$. This can be extended to an isomorphism.
\begin{pro}[\cite{eber73.1,pater12}]\label{j1}
Let $M$ be a closed manifold without conjugate points. Then, for every $\theta\in T_1M$,
\begin{enumerate}
    \item The map $\xi\mapsto J_{\xi}$ is a linear isomorphism between $N(\theta)$ and $\mathcal{J}_{\theta}$.
    \item For every $\xi \in N(\theta)$ and every $t\in \mathbb{R}$, $d_{\theta}\phi_t(\xi)=(J_{\xi}(t),J'_{\xi}(t))$. 
\end{enumerate}
\end{pro}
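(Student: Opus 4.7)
The plan is to treat the two claims essentially independently: the first is a purely linear-algebraic statement about initial conditions of ODEs, and the second is the standard identification of the geodesic flow's differential with the Jacobi equation obtained by differentiating a variation of geodesics.

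For part (1), I would start by noting that the map $\xi \mapsto J_{\xi}$ is well-defined and linear because the Jacobi equation is linear in its initial data, and by construction $J_\xi(0)=\xi_h$, $J_\xi'(0)=\xi_v$ depend linearly on $\xi=(\xi_h,\xi_v)$. Injectivity is immediate: if $J_\xi\equiv 0$ along $\gamma_\theta$ then in particular $J_\xi(0)=J_\xi'(0)=0$, forcing $\xi_h=\xi_v=0$. For surjectivity, given any $J\in\mathcal J_\theta$ set $\xi_h:=J(0)$ and $\xi_v:=J'(0)$; orthogonality of $J$ to $\dot\gamma_\theta$ means both vectors lie in the orthogonal complement of $\dot\gamma_\theta(0)$, and the Sasaki horizontal/vertical decomposition then places $\xi=(\xi_h,\xi_v)$ in $H(\theta)\oplus V(\theta)=N(\theta)$; by uniqueness of solutions of the Jacobi equation $J_\xi=J$. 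A dimension count $\dim N(\theta)=2(n-1)=\dim\mathcal J_\theta$ cross-checks bijectivity.

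For part (2), I would choose a smooth curve $c:(-\epsilon,\epsilon)\to T_1M$ with $c(0)=\theta$ and $\dot c(0)=\xi\in N(\theta)$, and consider the variation of geodesics $\alpha(s,t):=\gamma_{c(s)}(t)=\pi(\phi_t(c(s)))$. Since each $\alpha(s,\cdot)$ is a geodesic, the variation field $J(t):=\partial_s\alpha(s,t)|_{s=0}$ is a Jacobi field along $\gamma_\theta$. The Sasaki decomposition identifies the horizontal and vertical components of $\dot c(0)$ as $J(0)$ and $J'(0)$ respectively — this is the defining property of the Sasaki connection map — so $J=J_\xi$. Writing $\phi_t(c(s))=\dot\alpha(s,t)$ and differentiating in $s$ at $s=0$, one reads off that the horizontal component of $d_\theta\phi_t(\xi)$ is $J_\xi(t)$ and the vertical component is the covariant derivative $J_\xi'(t)$, which is exactly the claim.

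The no-conjugate-points hypothesis plays essentially no direct role here (both statements hold for any Riemannian manifold), so there is no real obstacle; the only step that requires care is the bookkeeping between the horizontal/vertical components under the Sasaki identification and the geometric quantities $J(0),J'(0)$, which is where all the content of part (2) lives. I would therefore allocate the bulk of the writeup to making that identification precise by referring back to the conventions for $N(\theta)=H(\theta)\oplus V(\theta)$ fixed in Subsection \ref{m}, and cite \cite{eber73.1,pater12} for the standard computation.
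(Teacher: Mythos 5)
The paper does not prove this proposition at all: it is quoted as standard background from \cite{eber73.1,pater12}, so there is no internal argument to compare against. Your proof is the classical one (linearity, injectivity and surjectivity read off from initial conditions of the Jacobi equation, plus the geodesic-variation computation identifying horizontal and vertical components of $d_\theta\phi_t(\xi)$ with $J_\xi(t)$ and $J'_\xi(t)$ via the connection map) and is correct as sketched; the only cosmetic point is that $\pi$ already denotes the covering map $\tilde{M}\to M$ in this paper, so the footpoint projection $T_1M\to M$ used in your variation $\alpha(s,t)$ should get a different symbol.
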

In \cite{green58} Green defined 2 bundles existing always for compact manifolds without conjugate points. Given $\theta=(p,v)\in T_1M$, for every $T\in \mathbb{R}$ and every $w\in T_pM$ orthogonal to $v$, let $J_T$ be the Jacobi field satisfying $J_T(0)=w$ and $J_T(T)=0$. Green \cite{green58} showed that limits 
\begin{equation}\label{limi}
    J_w^s=\lim_{T\to \infty}J_T \quad \text{ and } \quad J_w^u=\lim_{T\to -\infty}J_T
\end{equation}
always exist and are Jacobi fields. They are called stable and unstable Jacobi field on $\gamma_{\theta}$, or simply Green fields. Green fields never vanish and form 2 vector subspaces of $\mc{J}_{\theta}$ denoted by $\mc{J}^s_{\theta}$ and $\mc{J}^u_{\theta}$. By Proposition \ref{j1}(1), $\mc{J}^s_{\theta}$ and $\mc{J}^u_{\theta}$ correspond to 2 vector subspaces $G^s(\theta)$ and $G^u(\theta)$ of $T_{\theta}T_1M$. The families $G^s=\bigcup_{\theta\in T_1M}G^s(\theta)$ and $G^u=\bigcup_{\theta\in T_1M}G^u(\theta)$ form 2 sub-bundles of the tangent bundle of $T_1M$, called stable and unstable Green bundle. Green bundles are just measurable and invariant by $d\phi_t$ \cite{eber73.1}. We say that Green bundles are continuous if $G^s(\theta)$ and $G^u(\theta)$ depend continuously on $\theta\in T_1M$. We have an improvement of Proposition \ref{j1}(2).
\begin{pro}[\cite{eber73.1}]\label{compa}
Let $M$ be a closed manifold without conjugate points. Then, there exists $B>0$ such that for every $\xi\in G^s\cup G^u$ and every $t\in \mb{R}$, $\|J_{\xi}(t)\|\leq \|d\phi_t(\xi)\|\leq B \|J_{\xi}(t)\|$.
\end{pro}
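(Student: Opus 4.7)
\medskip

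\noindent\textbf{Proof plan for Proposition \ref{compa}.}

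The plan is to exploit the Sasaki decomposition in Proposition \ref{j1}(2) together with a uniform bound on the stable and unstable Riccati operators that parametrize the Green bundles. First I would observe that the horizontal and vertical subspaces $H(\theta)$ and $V(\theta)$ are Sasaki–orthogonal, so for every $\xi\in N(\theta)$,
\[
\|d\phi_t(\xi)\|^2 \;=\; \|J_\xi(t)\|^2 + \|J'_\xi(t)\|^2.
\]
This immediately yields the lower bound $\|J_\xi(t)\|\leq \|d\phi_t(\xi)\|$ without any additional hypothesis, and reduces the upper bound to producing a constant $B>0$ such that
\[
\|J'_\xi(t)\| \;\leq\; C\,\|J_\xi(t)\|\qquad (\xi\in G^s\cup G^u,\; t\in\mathbb{R})
\]
for some $C$ independent of $\theta$, $\xi$ and $t$; then $B=\sqrt{1+C^2}$ will work.

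The key step is a uniform bound on the stable and unstable Riccati operators. For each $T>0$ and each $\theta\in T_1M$, the Jacobi fields $J_T$ used in \eqref{limi} never vanish on $(-\infty,T)$ because $M$ has no conjugate points, so the endomorphism $U_T(t):=J'_T(t)J_T(t)^{-1}$ is well defined and symmetric on the orthogonal complement of $\dot\gamma_\theta(t)$, and satisfies the matrix Riccati equation $U_T'+U_T^2+R(t)=0$, where $R(t)$ is the curvature operator along $\gamma_\theta$. Following Green and Eberlein, on a closed manifold without conjugate points one shows that the family $\{U_T(t)\}$ is monotone in $T$ and is uniformly bounded on compact time intervals by a constant depending only on $\|R\|_\infty$ and the diameter of $M$; passing to the limit $T\to+\infty$ (resp. $T\to-\infty$) gives symmetric operators $U^s(\theta)$ and $U^u(\theta)$ with a uniform bound $\|U^\ast(\theta)\|\le C$ (here one uses compactness of $M$ to make $C$ independent of $\theta$). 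By construction, a stable Green field satisfies $J'(t)=U^s(\phi_t\theta)J(t)$, and similarly for unstable fields, so $\|J'_\xi(t)\|\le C\|J_\xi(t)\|$ follows.

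Putting both bounds together finishes the argument. The main obstacle is precisely the uniform Riccati bound: a priori the Riccati solutions $U_T$ could blow up as $T\to\infty$, and without negative curvature one cannot invoke the usual comparison arguments. The absence of conjugate points only guarantees existence of $U_T(t)$ for $t<T$, so one must prove \emph{global} uniform bounds on $(-\infty,\infty)$. This is done by combining the monotonicity of $U_T$ in $T$ (a standard consequence of the Riccati equation together with absence of conjugate points) with a compactness argument on $T_1M$ to remove the dependence on $\theta$, and is exactly the content of Eberlein's estimate in \cite{eber73.1}. Once that uniform bound is in hand, the remainder of the proof is a direct consequence of Proposition \ref{j1}(2) and the orthogonality of the Sasaki decomposition.
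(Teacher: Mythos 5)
Your argument is essentially correct and is the standard Green--Eberlein proof of this estimate: the paper does not prove Proposition \ref{compa} itself but imports it from \cite{eber73.1}, where exactly this combination of the Sasaki-orthogonal splitting $\|d\phi_t(\xi)\|^2=\|J_\xi(t)\|^2+\|J'_\xi(t)\|^2$ (via Proposition \ref{j1}(2)) with a uniform bound on the limiting Riccati operators along Green fields is what yields the two inequalities. Two cosmetic points: the Riccati operator $U_T$ should be built from matrix (Lagrangian) Jacobi solutions vanishing at time $T$ rather than from the single vector fields $J_T$ of \eqref{limi}, and the uniform bound depends only on a lower sectional curvature bound $-c^2$ supplied by compactness (giving $\|U^{s}\|,\|U^{u}\|\le c$ and hence $B=\sqrt{1+c^2}$), not on the diameter of $M$.
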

There is also a link with visibility manifolds. A combination of Proposition 3.6 of \cite{rugg21} and \cite{rugg94} express visibility in terms of Gromov hyperbolicity (see \cite{grom87} for a definition).
\begin{teo}\label{gromo}
Let $M$ be a closed manifold without conjugate points. If the universal covering $\T{M}$ is Gromov hyperbolic and Green bundles are continuous then $\T{M}$ is a visibility manifold.    
\end{teo}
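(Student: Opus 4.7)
The plan is to argue by contradiction, following the strategy of the cited works of Ruggiero. Suppose $\T{M}$ is not a visibility manifold. Then there exist a point $z\in \T{M}$, an angle $\epsilon_0>0$ and sequences $(x_n),(y_n)\subset \T{M}$ with $d(z,[x_n,y_n])\to\infty$ while $\sphericalangle_z(x_n,y_n)\geq \epsilon_0$ for every $n$; since $M$ is compact, $z$ may be fixed in a fundamental domain after applying deck transformations. In a $\delta$-hyperbolic geodesic space the Gromov product satisfies $(x_n,y_n)_z = d(z,[x_n,y_n]) + O(\delta)$, so $(x_n,y_n)_z\to\infty$. The standard thin-triangle property then forces the initial segments of length $L_n\sim (x_n,y_n)_z$ of $[z,x_n]$ and $[z,y_n]$, parametrized by arclength from $z$, to fellow-travel uniformly: writing $v_n,w_n$ for their initial unit vectors, $d(\gamma_{v_n}(t),\gamma_{w_n}(t))\leq C\delta$ for all $0\leq t\leq L_n$ and some uniform constant $C$.

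Next I would pass to the limit. Compactness of the unit tangent sphere at $z$ allows one to extract a subsequence with $v_n\to v$ and $w_n\to w$, still with $\sphericalangle(v,w)\geq \epsilon_0>0$. Since $L_n\to\infty$, an Arzel\`a--Ascoli argument produces two \emph{distinct} geodesic rays $\gamma_v,\gamma_w$ starting at $z$ satisfying $d(\gamma_v(t),\gamma_w(t))\leq C\delta$ for every $t\geq 0$.

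The core step is to rule out such a pair of rays using continuity of the Green bundles, and it is where the hypotheses actually enter. Let $J$ be the orthogonal Jacobi field along $\gamma_v$ describing the infinitesimal variation toward $\gamma_w$; then $J(0)=0$ and $J'(0)\neq 0$. Combining Proposition \ref{compa} with the continuity of $G^s$ and $G^u$ one obtains a continuous, flow-invariant splitting $N=G^s\oplus G^u$ on $T_1\T{M}$, along which $\|d\phi_t\|$ behaves like the norm of the associated Jacobi field up to the constant $B$. Since stable Green fields never vanish, $J$ cannot lie entirely in $G^s$, so its unstable component is nontrivial; the Gromov hyperbolicity of $\T{M}$ then guarantees that unstable Jacobi fields grow without bound as $t\to\infty$, whence $\|J(t)\|\to\infty$, contradicting the uniform bound from the previous step. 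This is the content extracted from \cite{rugg94} together with Proposition 3.6 of \cite{rugg21}. The delicate point, and the \emph{main obstacle}, is exactly this translation of Gromov hyperbolicity of the universal cover into the transversality $G^s\cap G^u=\{0\}$ and the unbounded growth of Jacobi fields off the stable bundle; the coarse-geometric manipulations in the first two steps are routine, while the growth dichotomy is where all the geometric content lies.
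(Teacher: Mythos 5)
First, note that the paper does not give an independent proof of Theorem \ref{gromo}: it is stated as a direct combination of Proposition 3.6 of \cite{rugg21} with \cite{rugg94}, so a self-contained argument would have to supply precisely the content of those references. Your coarse-geometric reduction is fine and is indeed the standard first half of such a proof: negating visibility, using $(x_n|y_n)_z=d(z,[x_n,y_n])+O(\delta)$ and thin triangles to make $[z,x_n]$ and $[z,y_n]$ fellow-travel up to times $L_n\to\infty$, and passing to limits of the initial vectors produces two geodesic rays $\gamma_v,\gamma_w$ from $z$ with $\sphericalangle_z(v,w)\geq\epsilon_0$ and $d(\gamma_v(t),\gamma_w(t))\leq C\delta$ for all $t\geq 0$; this reduces visibility to ``Gromov hyperbolicity plus divergence of geodesic rays''.

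The genuine gap is in your core step. There is no ``orthogonal Jacobi field along $\gamma_v$ describing the infinitesimal variation toward $\gamma_w$'': the directions $v$ and $w$ are a definite angle $\geq\epsilon_0$ apart, so their separation is not recorded by any single Jacobi field. To produce one you would differentiate the interpolating family $s\mapsto\gamma_{v_s}$, but the bound $d(\gamma_v(t),\gamma_w(t))\leq C\delta$ only controls the distance between the endpoints of the curve $s\mapsto\gamma_{v_s}(t)$, not its length $\int_0^1\|J_s(t)\|\,ds$, so no bounded nontrivial Jacobi field with $J(0)=0$ follows. What is needed at exactly this point is the divergence of geodesic rays in $\T{M}$, which is not known for general manifolds without conjugate points and is the actual substance of Proposition 3.6 of \cite{rugg21}, where the continuity of the Green bundles enters. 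Moreover, even granting such a $J$, decomposing it as $J^s+J^u$ with $J^u\neq 0$ presupposes the transversality $G^s(\theta)\cap G^u(\theta)=\{0\}$, which fails in general (the Green bundles may coincide along non-expansive orbits), and the claim that Gromov hyperbolicity forces unbounded growth of unstable Jacobi fields is again a nontrivial theorem you do not prove; Proposition \ref{compa} alone gives no growth. Since you explicitly defer these points to \cite{rugg94} and \cite{rugg21}, your argument in the end assumes the cited results --- as the paper itself does --- while the Jacobi-field bridge you insert between the two non-diverging rays and those results does not work as stated.
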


\subsection{Topological and metric pressure}
We recall some basic notions of ergodic theory. Let $\Phi(\phi_t:X\to X)$ be a continuous flow acting on a compact metric space $X$. Given $\epsilon,T>0$ and $x\in X$, the $(\epsilon,T)$-dynamical ball is defined by 
\[ B(x,\epsilon,T)=\{y\in X: d(\phi_t(x),\phi_t(y))<\epsilon, \text{ for }t\in[0,T]  \}. \]
A subset $E\subset X$ is called $(\epsilon,T)$-spanning if $X=\bigcup_{x\in E}B(x,\epsilon,T)$. A continuous function $f:X\to \mb{R}$ is called potential on $X$. Given a potential $f$, varying over all possible $(\epsilon,T)$-spanning sets $E$, we define 
\[ S(f,\Phi,\epsilon,T)=\inf \left\{ \sum_{x\in E}e^{\int_0^Tf(\phi_t(x))dt}: E \text{ is a } (\epsilon,T)\text{-spanning set} \right\}.\]
The topological pressure of $f$ with respect to the flow $\Phi$ is given by 
\[ P(f,\Phi)=\lim_{\epsilon\to 0}\limsup_{T\to \infty}\frac{1}{T}\log S(f,\Phi,\epsilon,T). \]
Note that when $f\equiv 0$, the summation gives the cardinality of the $(\epsilon,T)$-spanning set $E$ hence $S(f,\Phi,\epsilon,T)$ is just the minimum cardinality of a $(\epsilon,T)$-spanning set. We thus get the topological entropy of the flow $h(\Phi)=P(0,\Phi)$. Moreover, for every $\Phi$-invariant measure $\mu$ on $X$, define its metric entropy by
\[ P_{\mu}(f,\Phi)=h_{\mu}(\phi_1)+\int_Xfd\mu, \]
where $h_{\mu}(\phi_1)$ is the metric entropy of the time-1 map $\phi_1$. For any potential on $X$ we have the following variational principle \cite{walt75},
\[  P(f,\Phi)=\sup_{\mu}P_{\mu}(f,\Phi),\]
where $\mu$ varies over all $\Phi$-invariant measures on $X$. A $\Phi$-invariant measure $\mu$ on $X$ is called equilibrium measure for $(f,\Phi)$ if $\mu$ attains the supremum.  

\section{The factor flow}\label{c26}

We give a brief sketch of the construction of a factor flow of the geodesic flow of a closed manifold $M$ without conjugate points and visibility universal covering $\T{M}$. The construction was carried originally by Gelfert-Ruggiero \cite{gelf19} and was extended to the higher dimensional setting by Mamani-Ruggiero \cite{mam24}. For every $\eta,\theta\in T_1M$, $\eta$ and $\theta$ are equivalent, 
\[  \eta\sim\theta \quad \text{ if and only if }\quad  \eta\in \mathcal{I}(\theta).\]
This is an equivalence relation that induces a quotient space $X$ and a quotient map $\chi:T_1M\to X$. We endow $X$ with the quotient topology and hence $\chi$ is continuous. For every $\theta \in T_1M$, we denote by $[\theta]=\chi(\theta)$ the equivalence class of $\theta$. Using the geodesic flow $\phi_t$ induced by $(M,g)$, we define a quotient flow $\Psi(\psi_t:X\to X)$ for every $\theta\in T_1M$ and every $t\in \mathbb{R}$ by 
\begin{equation}\label{for}
    \psi_t\circ \chi(\theta)=\chi\circ \phi_t(\theta).    
\end{equation}
We can make an analogous construction on the universal covering $\tilde{M}$: $\eta,\theta\in T_1\tilde{M}$ are equivalent if $\eta \in\mathcal{\tilde{I}}(\theta)=\mathcal{\tilde{F}}^s(\theta)\cap \mathcal{\tilde{F}}^u(\theta)\subset T_1\tilde{M}$. Similarly we get a quotient space $\tilde{X}$ endowed with the quotient topology, a quotient map $\Tilde{\chi}:T_1\tilde{M}\rightarrow \tilde{X}$ and a quotient flow $\T{\Psi}(\tilde{\psi}_t:\tilde{X}\rightarrow \Tilde{X})$. Since $T_1\tilde{M}$ is the universal covering of $T_1M$, the map $\Pi:\tilde{X}\to X$ satisfying $\chi\circ d\pi =\Pi\circ \tilde{\chi}$ is a well-defined covering map hence $\tilde{X}$ and $X$ are locally homeomorphic.
\begin{pro}[Theorem 1.1 of \cite{mam24}]
Let $M$ be a closed $n$-manifold without conjugate points and with visibility universal covering. Then, its geodesic flow is time-preserving semi-conjugate to a continuous flow $\Psi$ on a compact metric space $X$ of topological dimension at least $n-1$. 
\end{pro}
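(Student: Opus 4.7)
The plan is to verify, in sequence, the four ingredients of the statement: that $\sim$ descends to a compact Hausdorff metric quotient $X$, that the induced map $\psi_t$ is a well-defined continuous flow, that the semi-conjugacy $\chi\circ \phi_t = \psi_t\circ \chi$ is time-preserving, and that $\dim X\geq n-1$.

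First I would check that $\sim$ is an equivalence relation. Symmetry and transitivity both follow from the defining property of a foliation: $\eta\in \mathcal{F}^{*}(\theta)$ forces $\mathcal{F}^{*}(\eta)=\mathcal{F}^{*}(\theta)$, so $\eta\sim\theta$ gives $\mathcal{I}(\eta)=\mathcal{I}(\theta)$. The key topological step is to show that the graph $R=\{(\eta,\theta):\eta\sim\theta\}\subset T_1M\times T_1M$ is closed. Suppose $(\eta_n,\theta_n)\to(\eta,\theta)$ with $\eta_n\in \mathcal{F}^s(\theta_n)\cap \mathcal{F}^u(\theta_n)$. By continuity of the horospherical foliations (Theorem \ref{v1}) and the uniform diameter bound $\mathrm{Diam}(\mathcal{I})\leq Q(M)$ from Proposition \ref{morse}, the limit $\eta$ must lie in $\mathcal{F}^s(\theta)\cap \mathcal{F}^u(\theta)=\mathcal{I}(\theta)$. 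Closedness of $R$ together with compactness of the equivalence classes yields Hausdorffness of $X$, and a compact Hausdorff quotient of a compact metrizable space is itself metrizable.

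The flow $\psi_t$ defined by \eqref{for} is well-posed because the invariance $\phi_t(\mathcal{F}^{*}(\theta))=\mathcal{F}^{*}(\phi_t(\theta))$ (Proposition \ref{horo}) gives $\phi_t(\mathcal{I}(\theta))=\mathcal{I}(\phi_t(\theta))$, so $\phi_t$ sends equivalence classes to equivalence classes. The flow identities $\psi_0=\mathrm{id}$, $\psi_s\circ\psi_t=\psi_{s+t}$ and the joint continuity of $(t,[\theta])\mapsto \psi_t([\theta])$ transfer from $\phi$ via the universal property of the quotient map $\chi$. The time-preserving character of the semi-conjugacy is built into the definition, since the same parameter $t$ appears on both sides of \eqref{for} with no reparametrization needed.

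For the dimension estimate I would appeal to the Hurewicz dimension formula for mappings: if $f:A\to B$ is a closed continuous surjection between separable metrizable spaces with $\dim f^{-1}(b)\leq k$ for every $b\in B$, then $\dim A\leq \dim B+k$. The quotient map $\chi:T_1M\to X$ is continuous and surjective, and it is closed because $T_1M$ is compact and $X$ is Hausdorff by the previous step. Its fibers are the equivalence classes $\mathcal{I}(\theta)\subset \mathcal{F}^s(\theta)$, and $\mathcal{F}^s(\theta)$ is a Lipschitz $(n-1)$-submanifold by Proposition \ref{horo}, so every fiber has topological dimension at most $n-1$. Plugging $\dim T_1M=2n-1$ into Hurewicz yields $\dim X\geq n$, which is in fact stronger than the claimed bound. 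The main obstacle in the whole argument is the closedness of $R$: without the uniform diameter control from Proposition \ref{morse}, a convergent sequence in $R$ could drift onto a different pair of horospherical leaves, destroying Hausdorffness and consequently the dimension argument, and it is precisely the visibility hypothesis that rules this out.
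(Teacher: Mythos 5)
The step you yourself call ``the key topological step'' --- closedness of the relation $R=\{(\eta,\theta):\eta\in\mc{I}(\theta)\}$ --- is exactly where your argument has a genuine gap. You justify it ``by continuity of the horospherical foliations (Theorem \ref{v1})'' working in $T_1M$, but Theorem \ref{v1} says these foliations are \emph{minimal}: every leaf $\mc{F}^s(\theta)$ is dense in $T_1M$. Consequently the same-leaf relation $\{(\eta,\theta):\eta\in\mc{F}^s(\theta)\}$ is dense in $T_1M\times T_1M$ and is never closed, so continuity of the foliations downstairs cannot let you pass to the limit in $\eta_n\in\mc{F}^s(\theta_n)\cap\mc{F}^u(\theta_n)$; moreover, membership in $\mc{F}^s(\theta)\cap\mc{F}^u(\theta)$ is itself defined through lifts, since $\mc{F}^*(\theta)=d\pi(\mc{\T{F}}^*(\T\theta))$. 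The argument has to be run in the universal covering: there the leaves $\mc{\T{F}}^s(\T\theta)$, $\mc{\T{F}}^u(\T\theta)$ are closed, properly embedded and depend continuously on $\T\theta$, and the uniform bound $Diam(\mc{\T{I}}(\T\theta))\le Q(M)$ of Proposition \ref{morse} is what lets you choose lifts $\T\eta_n\in\mc{\T{I}}(\T\theta_n)$ staying within distance $Q(M)$ of convergent lifts $\T\theta_n\to\T\theta$, extract a convergent subsequence $\T\eta_n\to\T\eta\in\mc{\T{F}}^s(\T\theta)\cap\mc{\T{F}}^u(\T\theta)$, and then project using $d\pi(\mc{\T{I}}(\T\theta))=\mc{I}(\theta)$. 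You cite the diameter bound but never use it in this way, and without this the Hausdorffness and metrizability of $X$ --- on which your flow-continuity transfer and your Hurewicz argument both rest --- are not established. (Note also that the present paper does not prove this proposition at all; it quotes it as Theorem 1.1 of \cite{mam24}, where the quotient topology is handled via saturated neighborhoods as in Proposition \ref{que1}.)

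The remaining architecture is sound. Well-definedness of $\psi_t$ from $\phi_t(\mc{I}(\theta))=\mc{I}(\phi_t(\theta))$, the time-preserving character being built into Equation \eqref{for}, and metrizability of a compact Hausdorff continuous image of $T_1M$ are all fine once the closedness step is repaired. Your dimension argument via the Hurewicz dimension-lowering theorem is also legitimate at that point: the fibers $\chi^{-1}(\chi(\theta))=\mc{I}(\theta)$ sit inside the $(n-1)$-dimensional Lipschitz leaves of Proposition \ref{horo}, $\chi$ is closed, and $\dim T_1M=2n-1$ gives $\dim X\ge n$, which indeed implies (and even improves) the stated bound $n-1$; but this conclusion is only as good as the Hausdorffness step it depends on.
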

So, the factor flow $\Psi$ of the geodesic flow is just the quotient flow defined above. Among other dynamical properties of $\Psi$ we mention the following.
\begin{pro}[\cite{mam24}]\label{spe}
Let $M$ be a closed manifold without conjugate points and with visibility universal covering. Then, the factor flow $\Psi$ is topologically mixing and expansive. Moreover, $\Psi$ has a local product structure, the specification property and a unique measure of maximal entropy.
\end{pro}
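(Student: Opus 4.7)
The plan is to establish the five assertions in the order stated, exploiting the semi-conjugacy \eqref{for} together with the tools already collected: Theorem~\ref{v1} (topological mixing and continuity of the horospherical foliations), Theorem~\ref{conec} (heteroclinic connections) and Proposition~\ref{morse} (uniform bound on the diameter of the classes).

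First I would obtain topological mixing essentially for free. Since $\chi:T_1M\to X$ is a continuous surjection and semi-conjugates $\phi_t$ to $\psi_t$, topological mixing descends to the factor: given open sets $U,V\subset X$, their preimages $\chi^{-1}(U),\chi^{-1}(V)$ are open and non-empty in $T_1M$, so by Theorem~\ref{v1}(2) there exists $T>0$ with $\phi_t(\chi^{-1}(U))\cap \chi^{-1}(V)\neq \emptyset$ for every $t\geq T$, and applying $\chi$ yields $\psi_t(U)\cap V\neq \emptyset$.

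Next I would address expansiveness. The equivalence relation $\sim$ is designed exactly to collapse the obstruction to expansivity: two points in $T_1M$ whose geodesic orbits stay forever close must have the same stable horospherical behaviour in forward time and the same unstable behaviour in backward time. Suppose $d_X(\psi_t[\eta],\psi_t[\theta])<\epsilon$ for all $t\in\mathbb{R}$. Choose lifts $\tilde\eta,\tilde\theta\in T_1\tilde M$ whose $\phi_t$-orbits remain close in the Sasaki metric; working in the visibility universal covering I would use continuity of $\mc{\T F}^s,\mc{\T F}^u$ (Theorem~\ref{v1}) to show that forward closeness forces $\tilde\eta\in \mc{\T F}^s(\tilde\theta)$ and backward closeness forces $\tilde\eta\in \mc{\T F}^u(\tilde\theta)$, so $\tilde\eta\in \mc{\T I}(\tilde\theta)$ and $[\eta]=[\theta]$. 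The uniform diameter bound $Q(M)$ of Proposition~\ref{morse} is what permits $\epsilon$ to be chosen independently of the base point.

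For the local product structure I would invoke Theorem~\ref{conec} directly. Given $[\xi],[\xi']\in X$ close, take nearby lifts $\tilde\xi,\tilde\xi'\in T_1\tilde M$ with $\tilde\xi\notin \mc{\T F}^{cu}(\tilde\xi')$; Theorem~\ref{conec} provides $\tilde\eta\in T_1\tilde M$ with $\mc{\T F}^s(\tilde\xi)\cap \mc{\T F}^{cu}(\tilde\xi')=\mc{\T I}(\tilde\eta)$, i.e.\ a single equivalence class. Composing with the local homeomorphism $\Pi:\tilde X\to X$, the bracket $([\xi],[\xi'])\mapsto [\eta]$ is well-defined on a neighborhood of the diagonal and yields the local product structure. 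With expansiveness, topological mixing, and local product structure on a compact metric space in hand, the specification property follows from the classical criterion of Bowen and Franco; the unique measure of maximal entropy is then produced by Bowen's uniqueness theorem for expansive flows with specification, applied to the potential $f\equiv 0$. The main obstacle I anticipate is the passage between the intrinsic topology of $X$ and the horospherical geometry on $T_1\tilde M$: since $\chi$ is not a local homeomorphism in general, the choice of lifts must be managed carefully, and showing that the expansiveness constant and the size of the product neighborhoods can be taken uniform on $X$ is the technically delicate step, depending essentially on Proposition~\ref{morse} together with the uniform visibility of $\tilde M$.
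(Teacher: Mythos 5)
This proposition is not proved in the paper at all: it is imported verbatim from \cite{mam24}, and your outline essentially retraces the strategy of that reference (mixing via the semi-conjugacy, expansiveness via collapsing bi-asymptotic orbits, local product structure from Theorem~\ref{conec}, then specification and the unique maximal measure). Within that outline, however, the step you pass over is precisely the hard one. Your expansiveness argument rests on the claim that if the $\phi_t$-orbits of two lifts $\T{\eta},\T{\theta}$ stay at bounded Sasaki distance for all forward time then $\T{\eta}\in\mc{\T{F}}^s(\T{\theta})$ (and dually for backward time). In a general manifold without conjugate points this is false as a soft statement: geodesics that remain at bounded distance need not lie on the same horosphere, because horospheres are Busemann level sets and asymptotic geodesics need not share a Busemann function. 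Continuity of the foliations (Theorem~\ref{v1}) and the diameter bound of Proposition~\ref{morse}) do not give this; one needs the uniform visibility axiom in an essential way (divergence of rays, identification of points at infinity, comparison of Busemann functions), and establishing exactly this implication is one of the main technical results of \cite{mam24}, not a consequence of the ingredients quoted in the present paper. A second, smaller, inaccuracy: flow expansiveness (Bowen--Walters) only allows you to conclude $[\eta]=\psi_\tau[\theta]$ for some small time shift $\tau$, not $[\eta]=[\theta]$; e.g.\ $\eta=\phi_\tau(\theta)$ has an orbit forever close to that of $\theta$ but lies in a different class, so the statement you set out to prove is too strong and the reparametrization issue must be handled.

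Finally, the deduction of specification is misattributed. Franco's theorem (Theorem~\ref{franco}) \emph{assumes} expansiveness and specification and yields uniqueness of equilibrium measures; it does not produce specification. The implication ``expansive $+$ topologically mixing $+$ local product structure $\Rightarrow$ specification'' is true in this setting but must be proved for flows (with the attendant time-change bookkeeping), and this is again carried out in \cite{mam24} following the surface case of Gelfert--Ruggiero rather than being available as a citable classical criterion. Once specification and expansiveness are in hand, your final step (uniqueness of the measure of maximal entropy from the zero potential) is correct.
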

A key tool to prove these properties was a special family of neighborhoods of fibers $\mc{I}(\xi)$.
\begin{pro}[Lemma 4.1 of \cite{mam24}]\label{que1}
Let $M$ be a closed manifold without conjugate points and with visibility universal covering. For every $\xi\in T_1M$ there exists a basis of neighborhoods $\mc{A}(\xi)$ of $\mc{I}(\xi)$ in $T_1M$ such that $\chi(\mc{A}(\xi))$ is a basis of neighborhoods of $\chi(\xi)$ in $X$. In particular, $\{ \chi(\mc{A}(\xi)): \xi\in T_1M \}$ is a basis for the quotient topology of $X$. 
\end{pro}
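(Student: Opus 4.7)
The idea is to realize $\mathcal{A}(\xi)$ as the family of $\sim$-saturated open neighborhoods of $\mathcal{I}(\xi)$: for every open $U\subset T_1M$ with $\mathcal{I}(\xi)\subset U$, set
\[
  A_U \;:=\; \{\theta \in T_1M : \mathcal{I}(\theta) \subset U\},
\]
and define $\mathcal{A}(\xi) := \{A_U\}$ (it suffices to index along the countable family $U = B_s(\mathcal{I}(\xi),1/n)$). Since $\theta\in\mathcal{I}(\theta)$ we always have $A_U\subset U$, and transitivity of $\sim$ makes $A_U$ saturated, that is $\chi^{-1}(\chi(A_U))=A_U$; moreover $\mathcal{I}(\xi)\subset A_U$ by definition.

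The heart of the argument is to show that each $A_U$ is open in $T_1M$, which is equivalent to upper hemicontinuity of the set-valued map $\theta\mapsto \mathcal{I}(\theta)$. I would establish this via the closed-graph criterion for compact-valued correspondences into the compact space $T_1M$: if $(\theta_n,\eta_n)\to(\theta,\eta)$ with $\eta_n\in \mathcal{F}^s(\theta_n)\cap\mathcal{F}^u(\theta_n)$, then the continuity of the horospherical foliations granted by Theorem \ref{v1}(1) forces $\eta \in \mathcal{F}^s(\theta)\cap\mathcal{F}^u(\theta) = \mathcal{I}(\theta)$, so the graph of $\mathcal{I}$ is closed. Proposition \ref{morse} plays a crucial role here: the uniform bound $\mathrm{Diam}(\mathcal{I}(\theta))\leq Q(M)$ confines the relevant $\eta_n$ to a fixed Sasaki-tube around $\theta_n$, preventing escape to infinity along the (possibly non-closed) leaves and so supplying the compactness needed for the closed-graph-implies-USC implication.

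Given openness and saturation of $A_U$, the definition of the quotient topology yields that $\chi(A_U)$ is open in $X$. For the basis property at $\chi(\xi)$, take any open neighborhood $V$ of $\chi(\xi)$ in $X$ and set $U:=\chi^{-1}(V)$; then $U$ is an open neighborhood of $\mathcal{I}(\xi)$ in $T_1M$, the inclusion $A_U\subset U$ together with surjectivity of $\chi$ gives $\chi(A_U)\subset \chi(U) = V$, and $A_U$ itself is an open saturated neighborhood of $\mathcal{I}(\xi)$. An analogous argument shows that $\mathcal{A}(\xi)$ is a local basis at $\mathcal{I}(\xi)$ in $T_1M$, since any open neighborhood of the compact set $\mathcal{I}(\xi)$ contains some Sasaki-tube $B_s(\mathcal{I}(\xi),\epsilon)$ and then $A_{B_s(\mathcal{I}(\xi),\epsilon)}$ sits inside it. The last assertion of the proposition, that the collection over all $\xi$ is a basis for the topology of $X$, is then formal. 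The only non-routine step is the upper hemicontinuity of $\theta \mapsto \mathcal{I}(\theta)$: in dimension larger than two the fibers $\mathcal{I}(\theta)$ can be positive-dimensional, so passing from continuity of the two foliations to upper semicontinuity of their intersection requires both the global continuity statement of Theorem \ref{v1}(1) and the uniform Morse-type control of Proposition \ref{morse}, and this is exactly where the visibility hypothesis is used.
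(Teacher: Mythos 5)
Your overall skeleton is sound and is the natural route to this statement (which the paper itself does not prove but imports from Lemma 4.1 of \cite{mam24}): take the saturated sets $A_U=\{\theta:\mathcal{I}(\theta)\subset U\}$, observe $\mathcal{I}(\xi)\subset A_U\subset U$ and $\chi^{-1}(\chi(A_U))=A_U$, and reduce everything to openness of $A_U$, i.e.\ upper semicontinuity of $\theta\mapsto\mathcal{I}(\theta)$; the quotient-topology bookkeeping you do afterwards is correct and routine.

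The gap is in the one step you yourself call the heart of the argument. The closed-graph claim as you formulate it in $T_1M$ --- ``if $\eta_n\in\mathcal{F}^s(\theta_n)\cap\mathcal{F}^u(\theta_n)$ and $(\theta_n,\eta_n)\to(\theta,\eta)$, then continuity of the foliations forces $\eta\in\mathcal{F}^s(\theta)\cap\mathcal{F}^u(\theta)$'' --- is not a valid deduction from Theorem \ref{v1}(1). That theorem also says the foliations $\mathcal{F}^s,\mathcal{F}^u$ are \emph{minimal}, so every leaf is dense in $T_1M$; consequently leaf membership is not closed under limits (already with $\theta_n=\theta$ fixed, points $\eta_n\in\mathcal{F}^s(\theta)$ accumulate on all of $T_1M$), and the graph of $\theta\mapsto\mathcal{F}^s(\theta)$ is dense, not closed. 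Relatedly, your appeal to Proposition \ref{morse} ``to prevent escape to infinity'' is misplaced in $T_1M$, which is compact; and note that $\mathcal{I}(\theta)$ in the quotient construction must be read as $d\pi(\tilde{\mathcal{I}}(\tilde\theta))$, not as a literal intersection of the two dense projected leaves. The correct way to close the graph is to lift: choose lifts $\tilde\theta_n\to\tilde\theta$ in $T_1\tilde{M}$, use $\mathcal{I}(\theta_n)=d\pi(\tilde{\mathcal{I}}(\tilde\theta_n))$ to pick lifts $\tilde\eta_n\in\tilde{\mathcal{I}}(\tilde\theta_n)$ of $\eta_n$, invoke Proposition \ref{morse} to keep $\tilde\eta_n$ within uniformly bounded Sasaki distance of $\tilde\theta_n$ so that a subsequence converges to some lift $\tilde\eta$ of $\eta$, and only then use continuity of the horospherical foliations \emph{upstairs} --- where the leaves $\tilde{\mathcal{F}}^s(\tilde\theta),\tilde{\mathcal{F}}^u(\tilde\theta)$ are closed embedded submanifolds depending continuously on $\tilde\theta$ uniformly on compact sets (this is where visibility enters) --- to conclude $\tilde\eta\in\tilde{\mathcal{I}}(\tilde\theta)$ and hence $\eta\in\mathcal{I}(\theta)$. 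With the closed-graph step rerouted through the universal cover in this way, the rest of your argument (closed graph plus compact values in a compact space gives upper semicontinuity, hence $A_U$ open, hence the basis statements) goes through.
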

Given $\epsilon>0$ and a subset $A\subset T_1M$, we denote by $B(A,\epsilon)$ the $\epsilon$-neighborhood of $A$ with respect to Sasaki distance.
\begin{pro}\label{lev}
Let $M$ be a closed manifold without conjugate points and with visibility universal covering. For every $\epsilon>0$ there exists $\T{\epsilon}>0$ such that for every $[\xi],[\eta]\in X$ with $d([\xi],[\eta])<\T{\epsilon}$ there exists $\theta\in T_1M$ such that $\mc{I}(\xi),\mc{I}(\eta)\subset B(\mc{I}(\theta),\epsilon)$. In particular, if $\theta \in \mc{R}_0$ then for every $\xi'\in \mc{I}(\xi),\eta'\in \mc{I}(\eta)$ we have $d_s(\xi',\eta')<\epsilon$.
\end{pro}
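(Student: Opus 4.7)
The plan is to argue by contradiction, using compactness of $X$ and $T_1M$ together with continuity of the quotient map $\chi$. The heart of the matter is a ``semicontinuity of fibers'' statement: if $[\xi_n]\to[\theta_0]$ in $X$, then $\mc{I}(\xi_n)$ eventually lies in any prescribed Sasaki-neighborhood of $\mc{I}(\theta_0)$.

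First I would fix $\epsilon>0$ and suppose, aiming at a contradiction, that no $\T{\epsilon}>0$ does the job with the radius $\epsilon/2$ in place of $\epsilon$. Then there exist sequences $[\xi_n],[\eta_n]\in X$ with $d([\xi_n],[\eta_n])\to 0$ but such that no $\theta\in T_1M$ satisfies $\mc{I}(\xi_n),\mc{I}(\eta_n)\subset B(\mc{I}(\theta),\epsilon/2)$. Since $X$ is compact (it is a continuous image of the compact $T_1M$, and metric by the proposition preceding Proposition~\ref{spe}), I may pass to a subsequence along which $[\xi_n]\to[\theta_0]$, and then $[\eta_n]\to[\theta_0]$ as well.

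Next I would establish the key claim: $\mc{I}(\xi_n)\subset B(\mc{I}(\theta_0),\epsilon/2)$ for all sufficiently large $n$, and likewise for $\mc{I}(\eta_n)$. Suppose not; then there is a subsequence and points $\xi_{n_k}'\in \mc{I}(\xi_{n_k})$ with $d_s(\xi_{n_k}',\mc{I}(\theta_0))\geq\epsilon/2$. By compactness of $T_1M$ I may pass to a further subsequence with $\xi_{n_k}'\to \xi^*\in T_1M$. Since the distance to the compact set $\mc{I}(\theta_0)$ is continuous, $d_s(\xi^*,\mc{I}(\theta_0))\geq\epsilon/2$, so $\xi^*\notin \mc{I}(\theta_0)$, that is $[\xi^*]\neq[\theta_0]$. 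But $\xi_{n_k}'\in\mc{I}(\xi_{n_k})$ gives $[\xi_{n_k}']=[\xi_{n_k}]$, so by continuity of $\chi$ we have $[\xi^*]=\lim [\xi_{n_k}']=\lim [\xi_{n_k}]=[\theta_0]$, which contradicts the fact that $X$ is Hausdorff. The same argument works verbatim for $\mc{I}(\eta_n)$. Taking $\theta=\theta_0$ then contradicts the original assumption, proving the main conclusion (with $\epsilon/2$ replaced by $\epsilon$ after rewriting). The ``in particular'' clause is immediate: when $\theta\in\mc{R}_0$ one has $\mc{I}(\theta)=\{\theta\}$, so any $\xi'\in\mc{I}(\xi)$ and $\eta'\in\mc{I}(\eta)$ lie in $B(\theta,\epsilon/2)$, and the triangle inequality yields $d_s(\xi',\eta')<\epsilon$.

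The only real obstacle is the fiber semicontinuity step, and the argument above handles it via a soft compactness and Hausdorffness argument; no use of Proposition~\ref{que1} seems strictly necessary, although one could alternatively phrase the proof by choosing for each $\theta\in T_1M$ a neighborhood $U_\theta\in\mc{A}(\theta)$ with $U_\theta\subset B(\mc{I}(\theta),\epsilon/2)$, extracting a finite subcover of $X$ from $\{\chi(U_\theta)\}$, and invoking a Lebesgue-number argument on $X$ to produce $\T{\epsilon}$. Either route avoids any delicate geometric input beyond Proposition~\ref{morse} (to know $\mc{I}(\theta)$ is compact so that $B(\mc{I}(\theta),\epsilon/2)$ is a genuine open neighborhood of it).
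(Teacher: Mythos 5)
Your proof is correct, but it takes a genuinely different route from the paper. The paper's argument is exactly the ``alternative'' you sketch at the end: for each $\zeta\in T_1M$ it picks $A_\zeta\in\mc{A}(\zeta)$ (Proposition~\ref{que1}) inside the $\tfrac{\epsilon}{2}$-neighborhood of $\mc{I}(\zeta)$, uses that $\{\chi(A_\zeta)\}$ is an \emph{open} cover of $X$ (this openness is precisely what Proposition~\ref{que1} supplies, since a quotient map need not send open sets to open sets), and takes $\T{\epsilon}$ to be a Lebesgue number; uniformity comes for free. Your argument instead proves upper semicontinuity of the fibers $\xi\mapsto\mc{I}(\xi)=\chi^{-1}(\chi(\xi))$ by sequential compactness of $T_1M$, continuity of $\chi$, and uniqueness of limits in the metric space $X$, and then gets the uniform statement by a contradiction/subsequence argument. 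This is sound: metrizability of $X$ is legitimately available (it is part of Theorem 1.1 of \cite{mam24}, quoted before Proposition~\ref{spe}, and is already presupposed by the statement through the metric $d$), the identification $\chi^{-1}(\chi(\xi))=\mc{I}(\xi)$ is stated in Section~\ref{pot}, and you correctly run everything at scale $\tfrac{\epsilon}{2}$ so that the triangle inequality yields the ``in particular'' clause when $\mc{I}(\theta)=\{\theta\}$. What the two approaches buy: the paper's is shorter and structurally tied to the basis $\mc{A}(\zeta)$ that is reused elsewhere, while yours is more elementary and self-contained in that it bypasses Proposition~\ref{que1} entirely, at the cost of an indirect (non-constructive) choice of $\T{\epsilon}$.
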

\begin{proof}
Since $\mc{A}(\zeta)$ is a basis of neighborhoods of $\mc{I}(\zeta)$, choose an open set $A_{\zeta}\in \mc{A}(\zeta)$ such that $A_{\zeta}$ is contained in the $\frac{\epsilon}{2}$-neighborhood of $\mc{I}(\zeta)$, for every $\zeta\in T_1M$. Thus, $\mc{B}=\{\chi(A_{\zeta}):\zeta \in T_1M\}$ forms an open cover of the compact space $X$. Let $\T{\epsilon}>0$ be a Lebesgue number for $\mc{B}$ and let $[\xi],[\eta]\in X$ with $d([\xi],[\eta])<\T{\epsilon}$. It follows that there exists $\theta\in T_1M$ such that $[\xi],[\eta]\in \chi(A_{\theta}) \in \mathcal{B}$. Taking pre-images by $\chi$ we see that $\mc{I}(\xi)$ and $\mc{I}(\eta)$ are contained in the $\frac{\epsilon}{2}$-neighborhood of $\mc{I}(\theta)$ and hence in the corresponding $\epsilon$-neighborhood. If $\theta\in \mc{R}_0$ then $\mc{I}(\theta)=\{ \theta\}$ and $\mc{I}(\xi),\mc{I}(\eta)\subset B(\theta,\frac{\epsilon}{2})$. Therefore the triangle inequality provides the desired conclusion. 
\end{proof}

\section{Potentials having unique equilibrium measures}\label{pot}

Let $\phi_t:X\to X$ be a continuous flow acting on a compact metric space. A potential on $X$ is any continuous function $f:X\to \mb{R}$. We say that a potential $f$ is Bowen if there exist $C,\epsilon>0$ such that for every $s>0$ and every $x,x'\in X$ the following condition holds: 
\begin{equation}\label{bow}
\text{if }\quad d(\phi_t(x),\phi_t(x'))<\epsilon \quad\text{ for all } t\in [0,s] \quad \text{ then } \quad   \left|\int_0^sf(\phi_t(x))dt-\int_0^sf(\phi_t(x'))dt\right|\leq C.
\end{equation}
These potentials were introduced by Bowen in his work about uniqueness of equilibrium measures of continuous expansive flows with specification property \cite{bowenendo}. Now, consider a closed manifold $M$ without conjugate points and with visibility universal covering. Recall that $\chi:T_1M\to X$ is the quotient map induced by the equivalence relation defined in Section \ref{c26}. We observe that $\chi^{-1}(\chi(\xi))=\mc{I}(\xi)$ for every $\xi\in T_1M$ where $\mc{I}(\xi)=\mc{F}^s(\xi)\cap \mc{F}^u(\xi)$ (Subsection \ref{v}). Thus, the sets $\mc{I}(\xi)$ are exactly the equivalence classes of the equivalence relation in Section \ref{c26}. For brevity we refer to these equivalence classes $\mc{I}(\xi)$ simply as classes. 

We say that a continuous potential $f:T_1M\to \mb{R}$ is constant on classes if $f$ is constant when restricted to any class $\mc{I}(\xi)$. We remark that $f$ could have different values on different classes $\mc{I}(\xi)$ and $\mc{I}(\xi')$. In addition, $f$ has no restrictions on the expansive set $\mc{R}_0$, i.e., the set of singleton classes $\mc{I}(\xi)=\{\xi\}$ (Section \ref{v}).
\begin{defi}\label{hfam}
A continuous potential $f:T_1M\to\mb{R}$ belongs to the family $\mc{H}$ if $f$ is a Bowen potential which is constant on classes. 
\end{defi}
Since $f$ is constant on the equivalence classes $\mc{I}(\xi)$, it induces a unique well-defined function $\T{f}:X\to \mathbb{R}$ given by $\T{f}(\chi(\xi))=f(\xi)$ for every $\chi(\xi)\in X$. Clearly, this function satisfies $\T{f}\circ \chi=f$. Moreover, $\T{f}$ is continuous because $X$ is endowed with the quotient topology. We call $\T{f}$ the induced potential of $f$.
\begin{pro}\label{indu}
Let $M$ be a closed manifold without conjugate points and with visibility universal covering, and $\chi$ be the semi-conjugacy between the geodesic and quotient flow. For every $f\in \mc{H}$, the induced potential $\T{f}$ is Bowen and satisfies $\T{f}\circ \chi=f$.
\end{pro}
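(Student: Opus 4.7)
The proposition requires showing $\tilde f\circ\chi=f$ and that $\tilde f$ is Bowen on $X$. The identity is immediate from the definition $\tilde f(\chi(\xi))=f(\xi)$, and the continuity of $\tilde f$ has already been observed via the quotient topology on $X$. So the real task is to verify the Bowen condition for $\tilde f$ with respect to $\Psi$.

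My plan is to take Bowen constants $(C_f,\epsilon_f)$ for $f$, apply Proposition \ref{lev} with $\epsilon=\epsilon_f/2$ to obtain $\tilde\epsilon>0$, and claim $(C_f,\tilde\epsilon)$ are Bowen constants for $\tilde f$. Given $[\xi],[\eta]\in X$ with $d(\psi_t([\xi]),\psi_t([\eta]))<\tilde\epsilon$ for all $t\in[0,s]$, the key step is to promote this to the orbit-level closeness $d_s(\phi_t(\xi),\phi_t(\eta))<\epsilon_f$ for all such $t$ in $T_1M$. Once granted, the Bowen property of $f$ yields $\bigl|\int_0^s f(\phi_t(\xi))dt-\int_0^s f(\phi_t(\eta))dt\bigr|\leq C_f$. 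Combining $\psi_t\circ\chi=\chi\circ\phi_t$ from (\ref{for}) with $\tilde f\circ\chi=f$ one gets $\tilde f(\psi_t([\xi]))=f(\phi_t(\xi))$, and similarly for $\eta$, so the bound transfers verbatim to the integrals on $X$ and establishes the Bowen condition for $\tilde f$.

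The main obstacle is the orbit-level closeness in $T_1M$. Proposition \ref{lev} supplies, at each time $t$, a point $\theta_t\in T_1M$ with $\mathcal I(\phi_t(\xi)),\mathcal I(\phi_t(\eta))\subset B(\mathcal I(\theta_t),\epsilon_f/2)$, and its ``in particular'' clause gives the desired bound $d_s(\phi_t(\xi),\phi_t(\eta))<\epsilon_f$ precisely when $\theta_t$ lies in the expansive set $\mathcal R_0$. I would secure the claim by arranging that $\theta_t$ can always be chosen in $\mathcal R_0$. The natural route is to invoke density of $\mathcal R_0$ in $T_1M$, which in the visibility setting is standard: $\mathcal R_0$ is flow-invariant and open (upper semicontinuity of the classes $\mathcal I(\cdot)$ in the presence of the uniform diameter bound $Q(M)$ from Proposition \ref{morse}), and any open invariant set for a topologically mixing flow (Theorem \ref{v1}(2)) is dense. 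With this density in hand, I would revisit the proof of Proposition \ref{lev} and replace its cover $\{\chi(A_\zeta):\zeta\in T_1M\}$ by $\{\chi(A_\zeta):\zeta\in\mathcal R_0\}$; since for $\zeta\in\mathcal R_0$ the set $A_\zeta$ may be taken as an arbitrarily small neighborhood of the singleton $\mathcal I(\zeta)=\{\zeta\}$, density guarantees this refined family still covers $X$, and the resulting Lebesgue number produces an expansive $\theta_t$ at every time, completing the argument.
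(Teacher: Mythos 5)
Your reduction cannot work as stated, because the ``key step'' you want to grant --- promoting $d(\psi_t[\xi],\psi_t[\eta])<\T{\epsilon}$ on $[0,s]$ to the Sasaki closeness $d_s(\phi_t(\xi),\phi_t(\eta))<\epsilon_f$ on $[0,s]$ --- is simply false, and no choice of reference points $\theta_t$ can rescue it. The map $\chi$ collapses each class $\mc{I}(\xi)$ to a point, and by Proposition \ref{morse} classes may have diameter up to $Q(M)$. Take any $\xi\notin\mc{R}_0$ and $\eta\in\mc{I}(\xi)$ with $d_s(\xi,\eta)>\epsilon_f$ (such pairs exist precisely in the non-expansive situation this proposition must handle): then $[\phi_t(\xi)]=[\phi_t(\eta)]$ for every $t$, so the hypothesis of the Bowen condition on $X$ holds with distance $0$, while the orbit-level closeness in $T_1M$ fails already at $t=0$. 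The information lost inside non-trivial classes has to be compensated by the \emph{other} hypothesis on $f$, namely that it is constant on classes --- an assumption your Bowen verification never invokes. That is exactly what the paper's proof does: it covers the compact set $\phi_{[0,s]}(\mc{I}(\xi))\cup\phi_{[0,s]}(\mc{I}(\eta))$ by finitely many neighborhoods $B(\mc{I}(\theta_i),\epsilon)$, partitions $[0,s]$ accordingly, and on the intervals where $\theta_i\notin\mc{R}_0$ uses constancy of $f$ near the class so that the difference of integrands vanishes, reserving the Bowen property of $f$ (via Proposition \ref{lev}) for the intervals where $\theta_i\in\mc{R}_0$.

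The supporting claims about $\mc{R}_0$ are also not available in the generality of Proposition \ref{indu}. Openness of $\mc{R}_0$ does not follow from upper semicontinuity of $\xi\mapsto\mc{I}(\xi)$ together with the bound $Q(M)$: semicontinuity makes nearby classes small, not singletons, and the paper never asserts $\mc{R}_0$ is open; it only obtains density of $\mc{R}_0$ under the additional hypotheses of Theorem \ref{int} ($Int(\mc{R}_0)\neq\emptyset$) and Corollary \ref{gramo}, which are not assumed here. Moreover, even granting density, your refined cover $\{\chi(A_\zeta):\zeta\in\mc{R}_0\}$ with $A_\zeta$ an arbitrarily small neighborhood of the singleton $\{\zeta\}$ cannot cover $X$ in the way Proposition \ref{lev} requires: for a point $[\xi]$ whose class has diameter $\geq\epsilon_f$, the pull-back step of that proposition (concluding $\mc{I}(\xi)\subset B(\mc{I}(\zeta),\epsilon_f/2)$) is impossible when $\mc{I}(\zeta)=\{\zeta\}$ and $A_\zeta$ is small. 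So the Lebesgue-number argument cannot force the reference point into $\mc{R}_0$ at every time --- and, by the counterexample above, it must not. The part of your proposal establishing $\T{f}\circ\chi=f$ and continuity of $\T{f}$ is fine; the Bowen verification needs the paper's two-case partition argument (or an equivalent use of constancy on classes).
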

\begin{proof}
Since $f\in \mc{H}$ there exist $\epsilon,C>0$ satisfying Equation \eqref{bow}. For this $\epsilon>0$ choose $\T{\epsilon}>0$ from Proposition \ref{lev}. Suppose that $[\xi],[\eta]\in X$ with $d(\psi_t[\xi],\psi_t[\eta])<\T{\epsilon}$ for $t\in [0,s]$. Hence by Equation \eqref{for} we get $d([\phi_t(\xi)],[\phi_t(\eta)])<\T{\epsilon}$ for $t\in [0,s]$. Applying Proposition \ref{lev} for every $t\in [0,s]$ we get $\theta_t\in T_1M$ such that $\mc{I}(\phi_t(\xi)),\mc{I}(\phi_t(\eta))\subset B(\mc{I}(\theta_t),\epsilon)$. Since $D=\phi_{[0,s]}(\mc{I}(\xi))\cup \phi_{[0,s]}(\mc{I}(\eta))$ is a compact set, there exist finitely many open neighborhoods $B(\mc{I}(\theta_1),\epsilon),\ldots, B(\mc{I}(\theta_k),\epsilon)$ covering $D$. So, there exists a partition $0=t_0<t_1<\ldots<t_{k-1}<t_k=s$ such that
\begin{equation}\label{cub}
    \phi_{[t_{i-1},t_i]}(\mc{I}(\xi))\cup \phi_{[t_{i-1},t_i]}(\mc{I}(\eta))\subset B(\mc{I}(\theta_i),\epsilon) \quad \text{ for every} \quad i=1,\ldots,k.
\end{equation}
Thus, we can estimate the expression in Equation \ref{bow} in each partition interval. We have two cases. If $\theta_i\in T_1M\setminus \mc{R}_0$ then $\mc{I}(\theta_i)$ is not a singleton fiber and by definition $f$ is constant on $B(\mc{I}(\theta_i),\epsilon)$, the $\epsilon$-neighborhood of $\mc{I}(\theta_i)$. Using Equations \eqref{for} and \eqref{cub} we obtain
\begin{align*}
 \left|\int_{t_{i-1}}^{t_i}\left(\T{f}\circ\psi_t(\chi(\xi))-\T{f}\circ\psi_t(\chi(\eta))\right)dt\right|&= \left|\int_{t_{i-1}}^{t_i}\left(\T{f}\circ\chi(\phi_t(\xi'))-\T{f}\circ\chi(\phi_t(\eta'))\right)dt\right|\\
 &=\left|\int_{t_{i-1}}^{t_i}\left(f(\phi_t(\xi'))-f(\phi_t(\eta'))\right)dt\right|=0<C.   
\end{align*}
If $\theta_i\in \mc{R}_0$ then $\mc{I}(\theta_i)=\{ \theta_i\}$. Write $\xi'=\phi_{t_{i-1}}(\xi)$ and $\eta'=\phi_{t_{i-1}}(\eta)$. From Equation \eqref{cub} and Proposition \ref{lev} it follows that $d_s(\phi_t(\xi'),\phi_t(\eta'))<\epsilon$ for every $t\in [0,t_i-t_{i-1}]$. Applying the Bowen property of $f$ and Equation \eqref{for} we have     
\begin{align*}
 \left|\int_{t_{i-1}}^{t_i}\left(\T{f}\circ\psi_t(\chi(\xi))-\T{f}\circ\psi_t(\chi(\xi'))\right)dt\right|&= \left|\int_{t_{i-1}}^{t_i}\left(\T{f}\circ\chi(\phi_t(\xi))-\T{f}\circ\chi(\phi_t(\xi'))\right)dt\right|\\
 &=\left|\int_{t_{i-1}}^{t_i}\left(f(\phi_t(\xi))-f(\phi_t(\xi'))\right)dt\right|< C.   
\end{align*}
Summing up the estimates over all the partition intervals we conclude that $\T{f}$ is a Bowen potential. 

\end{proof}
We next recall a classical Franco's Theorem about equilibrium measures \cite{fran77}.
\begin{teo}\label{franco}
Let $\phi_t:X\to X$ be a continuous flow acting on a compact metric space. If $\phi_t$ is expansive and has the specification property then every Bowen potential has a unique equilibrium measure.
\end{teo}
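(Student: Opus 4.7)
The plan is to reproduce Bowen's original uniqueness strategy from \cite{bowenendo} in the flow setting, as carried out by Franco \cite{fran77}. The argument splits naturally into three parts: constructing an equilibrium measure, establishing a Gibbs-type estimate, and deducing uniqueness through ergodicity.

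For existence, I would build a candidate measure as a weak-$*$ limit of weighted atomic measures on separated sets. For each $T>0$, pick a maximal $(\epsilon,T)$-separated set $E_T$ and form
\[ \nu_T = \frac{1}{Z_T}\sum_{x\in E_T}e^{\int_0^T f(\phi_t(x))dt}\delta_x,\qquad Z_T=\sum_{x\in E_T}e^{\int_0^T f(\phi_t(x))dt}. \]
Averaging $\nu_T$ along the flow and extracting a weak-$*$ accumulation point yields a $\phi_t$-invariant probability measure $\mu$. A standard computation based on the variational principle and expansivity verifies that $\mu$ realises the pressure, hence is an equilibrium measure for $f$.

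The heart of the argument is the Gibbs property: I would show that there exist constants $K_1,K_2>0$ and $\epsilon_0>0$ such that for any equilibrium measure $\mu$, every $x\in X$, every $T>0$ and every $\epsilon\leq\epsilon_0$,
\[ K_1 \leq \frac{\mu(B(x,\epsilon,T))}{\exp\left(-TP(f,\phi)+\int_0^T f(\phi_t(x))dt\right)} \leq K_2. \]
The upper bound combines the Bowen property (which makes Birkhoff integrals of $f$ almost constant on dynamical balls) with expansivity (which controls how small dynamical balls can become, so they cannot be over-concentrated). The lower bound is the main obstacle and is where the specification property becomes indispensable: one uses specification to concatenate a near-orbit of $x$ on $[0,T]$ with orbits that remain separated on later time intervals, thereby producing sufficiently many distinct orbits passing through $B(x,\epsilon,T)$ to capture a definite fraction of the partition function, which in turn forces a lower bound on the $\mu$-mass of the ball.

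For uniqueness, suppose $\mu_1,\mu_2$ are two ergodic equilibrium measures. Applying the Gibbs estimate to both produces uniform two-sided comparability of $\mu_1$ and $\mu_2$ on every dynamical ball. A standard covering argument extends this to all Borel sets, yielding mutual absolute continuity. Since distinct ergodic invariant measures are always mutually singular, this forces $\mu_1=\mu_2$. Non-ergodic equilibrium measures are ruled out by ergodic decomposition: each component is itself an equilibrium measure, hence coincides with the unique ergodic one. The main technical obstacle throughout is the construction used for the Gibbs lower bound, where one must carefully coordinate specification times with separation constants to guarantee that the orbits produced remain genuinely $(\epsilon,T)$-separated and contribute independently to the estimate.
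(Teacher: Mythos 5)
The paper does not prove this statement at all: it is quoted verbatim from Franco \cite{fran77} (building on Bowen \cite{bowenendo}), so the comparison must be with that classical argument. Your outline correctly identifies the ingredients (weighted separated-set construction, Gibbs estimates, specification for the lower bound, ergodic decomposition), but the logical architecture has a genuine gap. You claim a two-sided Gibbs bound \emph{for any equilibrium measure} and sketch its lower bound by using specification to show that orbits through $B(x,\epsilon,T)$ capture a definite fraction of the partition function $Z_T$. That argument only controls the measure you explicitly built from the partition function, i.e.\ the weak$^*$ limit of the weighted atomic measures; for an arbitrary equilibrium measure $\mu$ there is no a priori relation between $\mu(B(x,\epsilon,T))$ and fractions of $Z_T$. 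In the Bowen--Franco theory the Gibbs property of an arbitrary equilibrium state is a \emph{corollary} of uniqueness (it equals the constructed measure), so taking it as an intermediate step toward uniqueness is circular.

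The second gap is the deduction ``two-sided Gibbs for $\mu_1,\mu_2$ $\Rightarrow$ mutual absolute continuity by a standard covering argument.'' Dynamical balls are not metric balls: $B(x,\epsilon,T)$ never shrinks below diameter comparable to $\epsilon$ as $T\to\infty$ (for an expansive flow it accumulates on sets containing orbit segments $\phi_{[-s,s]}(x)$), and no Besicovitch-type covering lemma is available for them. A maximal $(\epsilon,T)$-separated covering of a set $A$ only yields an estimate of the form $\mu_1(A)\leq C(\epsilon)\,\mu_2(A^{\epsilon})$, where $A^{\epsilon}$ is the $\epsilon$-neighborhood and $C(\epsilon)\to\infty$ as $\epsilon\to 0$; this does not give $\mu_2(A)=0\Rightarrow\mu_1(A)=0$. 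The classical proof closes the argument differently: one shows the constructed Gibbs measure $\mu$ is an equilibrium state, then proves that every ergodic measure $\nu$ singular to $\mu$ has a pressure deficit, $h_{\nu}+\int f\,d\nu<P(f,\Phi)$, by combining the \emph{lower} Gibbs bound for $\mu$ with a Katok-type counting estimate over $(\epsilon,T)$-spanning/separated families covering a set of large $\nu$-measure, and concludes by ergodic decomposition (together with ergodicity of $\mu$, itself obtained from specification). To repair your proof you would need to replace the absolute-continuity step by this pressure-deficit comparison, and restrict the Gibbs estimates to the constructed measure; the flow-specific issues (reparametrization in the definition of expansiveness, which Franco handles) are comparatively minor.
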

\begin{cor}\label{frank}
Let $M$ be a closed manifold without conjugate points and with visibility universal covering. Then, for every $f\in \mc{H}$ the induced potential $\T{f}$ has a unique equilibrium measure.
\end{cor}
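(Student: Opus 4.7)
The plan is to observe that this corollary is essentially an immediate concatenation of the three tools that have just been assembled, so the proof should be short and amount to verifying that the hypotheses of Franco's Theorem (Theorem \ref{franco}) are satisfied on the quotient space $X$.

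First, by Proposition \ref{spe}, the factor flow $\Psi$ acting on the compact metric space $X$ is expansive and enjoys the specification property. Second, by Proposition \ref{indu}, for any $f \in \mc{H}$ the induced potential $\T{f}:X\to \mb{R}$ is a Bowen potential with respect to $\Psi$. With these two facts in hand, Theorem \ref{franco} applies verbatim to the pair $(\T{f},\Psi)$, yielding a unique equilibrium measure for $\T{f}$.

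Since there is really no hidden difficulty, my proof proposal is just this chaining of results. The only thing worth emphasising, to avoid any sleight-of-hand, is that the compactness of $X$ and the continuity of $\Psi$ needed in the statement of Theorem \ref{franco} are part of the construction of the factor flow summarised before Proposition \ref{spe}, so every hypothesis of Franco's Theorem is indeed in place. The heavy lifting has already been done in Proposition \ref{indu} (transferring the Bowen property across the semi-conjugacy) and in \cite{mam24} (establishing expansivity and specification of $\Psi$); the corollary itself should be stated in one or two sentences.
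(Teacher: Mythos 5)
Your proposal is correct and is exactly the argument the paper intends (the corollary is left without an explicit proof precisely because it follows by applying Theorem \ref{franco} to the pair $(\T{f},\Psi)$, using the expansivity and specification of $\Psi$ from Proposition \ref{spe}, the Bowen property of $\T{f}$ from Proposition \ref{indu}, and the compactness of $X$ from the factor-flow construction). Nothing is missing.
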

To end the section we mention a special disintegration of measures. Let $\mu$ be a $\Phi$-invariant measure on $T_1M$. Since fibers $\mc{I}(\xi)$ are closed sets by Corollary \ref{morse}, the family $\{ \mc{I}(\xi)\}_{\chi(\xi)\in X}$ forms a measurable partition of $T_1M$. The partition space is just the quotient space $X$ and the map sending $\xi\in T_1M$ to the partition element containing $\xi$, is just the quotient map $\chi:T_1M\to X$. By Rokhlin's Theorem there exist a system of conditional measures $\{\mu_x\}_{x\in X}$ on $T_1M$ and a measure $\chi_*\mu$ on the partition space $X$ such that for every $\mu$-measurable function $f:T_1M\to \mb{R}$, the map $x\mapsto \int fd\mu_x$ is $\chi_*\mu$-measurable and satisfies
    \[ \int_{T_1M}fd\mu=\int_X\left(\int_{T_1M}fd\mu_x\right)d(\chi_*\mu). \]
In particular, every $f\in \mc{H}$ is constant on each partition element $\mc{I}(\xi)$ with $x=\chi(\xi)\in X$. Thus,
\begin{equation}\label{des}
    \int_{T_1M}fd\mu_x=\int_{\mc{I}(\xi)}fd\mu = f(\xi)= \T{f}(x) \quad \text{ hence } \quad \int_{T_1M}fd\mu=\int_X \T{f}(x)d(\chi_*\mu).
\end{equation}

\section{Projection of equilibrium measures}
For the family of potentials $\mc{H}$, we will show that push-forward of equilibrium measures on $T_1M$ results in equilibrium measures on the quotient space $X$. 

Let us first recall the case of equilibrium measures for the zero potential, i.e., maximal measures. Clearly, the zero potential belongs to $\mc{H}$. Recall that $\Phi(\phi_t:T_1M\to T_1M)$ denotes the geodesic flow. 
\begin{pro}[\cite{mam24}]\label{sur}
Let $M$ be a closed manifold without conjugate points and with visibility universal covering, $\Psi$ be the quotient flow and $\chi$ be the quotient map. Then
\begin{enumerate}
    \item $h(\Phi)=h(\Psi)$ and $h_{\mu}(\phi_1)=h_{\chi_*\mu}(\psi_1)$ for any $\Phi$-invariant measure $\mu$ on $T_1M$.
    \item If $\mu$ is a maximal measure for $\Phi$ so is $\chi_*\mu$ for $\Psi$.
\end{enumerate}
\end{pro}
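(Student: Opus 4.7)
The plan is to reduce both parts to the single geometric fact that the fibers of $\chi$, namely the classes $\mc{I}(\xi)$, carry no dynamical complexity under $\Phi$. Since $\chi$ is a continuous factor map intertwining $\Phi$ and $\Psi$, the trivial directions $h(\Psi)\le h(\Phi)$ and $h_{\chi_*\mu}(\psi_1)\le h_\mu(\phi_1)$ hold automatically, and all the work lies in the reverse inequalities. The key inputs are Proposition \ref{morse} (uniform diameter bound $\mathrm{Diam}(\mc{I}(\zeta))\le Q(M)$) and the $\phi_t$-invariance of the horospherical foliations from Proposition \ref{horo}, which together give $\phi_t(\mc{I}(\xi))=\mc{I}(\phi_t\xi)$ and hence $d_s(\phi_t\eta,\phi_t\xi)\le Q(M)$ for all $\eta\in\mc{I}(\xi)$ and all $t\in\mb{R}$.

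For the topological entropy equality I would invoke Bowen's inequality for a factor map,
\[ h(\Phi)\;\le\;h(\Psi)+\sup_{[\xi]\in X}h(\Phi,\mc{I}(\xi)), \]
where $h(\Phi,K)$ denotes the exponential growth rate of the minimum number of $(\epsilon,T)$-dynamical balls required to cover the compact set $K$. From the geometric observation above, for every $\epsilon>Q(M)$ a single Bowen ball $B(\xi,\epsilon,T)$ already covers $\mc{I}(\xi)$ for all $T>0$; the remaining task is to extend this to small $\epsilon$ and conclude $h(\Phi,\mc{I}(\xi))=0$, which would yield $h(\Phi)=h(\Psi)$.

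For the metric entropy part, I would apply the Abramov--Rokhlin formula to the measure-theoretic factor $\chi:(T_1M,\phi_1,\mu)\to(X,\psi_1,\chi_*\mu)$:
\[ h_\mu(\phi_1)\;=\;h_{\chi_*\mu}(\psi_1)+h_\mu\bigl(\phi_1\mid \chi^{-1}\mc{B}\bigr), \]
where $\mc{B}$ is the Borel $\sigma$-algebra of $X$. The atoms of $\chi^{-1}\mc{B}$ are precisely the classes $\mc{I}(\xi)$, and because their $\phi_t$-iterates remain classes of diameter at most $Q(M)$, any finite partition of $T_1M$ refining the fiber structure has uniformly bounded Rokhlin entropy under the iterates of $\phi_1$, which forces the conditional term to vanish. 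Combined with the topological identity, this gives (1). Part (2) is then immediate: if $\mu$ is maximal for $\Phi$, applying (1) yields $h_{\chi_*\mu}(\psi_1)=h_\mu(\phi_1)=h(\Phi)=h(\Psi)$, so $\chi_*\mu$ attains the supremum in the variational principle for $\Psi$.

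The main obstacle is confirming the vanishing of the fiber entropy for arbitrarily small scales $\epsilon>0$. The diameter bound $Q(M)$ handles large $\epsilon$ instantly, but at small scales one must rule out exponential growth of $(\epsilon,T)$-separated orbits within a single class $\mc{I}(\xi)$, despite the class remaining bounded along the entire orbit. In the visibility setting this should follow from the connectedness and uniform diameter bound provided by Proposition \ref{morse}, together with the geodesic-flow invariance of $\mc{\T{F}}^s$ and $\mc{\T{F}}^u$, but making this quantitative (and doing so simultaneously on the topological and measure-theoretic sides) is the delicate step that really uses the specific structure of horospheres in manifolds without conjugate points.
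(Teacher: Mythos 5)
Your reduction is the right skeleton: the trivial inequalities $h(\Psi)\le h(\Phi)$ and $h_{\chi_*\mu}(\psi_1)\le h_{\mu}(\phi_1)$ for a factor map, Bowen's fiber inequality on the topological side, a relative-entropy (Abramov--Rokhlin/Ledrappier--Walters) formula on the measure side, and the observation that item (2) follows formally from item (1) via the variational principle. This matches the strategy of the cited source \cite{mam24} (the present paper only quotes the result, it does not reprove it). However, there is a genuine gap exactly where you flag it: the vanishing of the fiber entropy $h(\Phi,\mc{I}(\xi))$, equivalently of the conditional term $h_{\mu}(\phi_1\mid\chi^{-1}\mc{B})$, is asserted rather than proven, and the two facts you actually invoke --- $\mathrm{Diam}(\mc{I}(\zeta))\le Q(M)$ and $\phi_t(\mc{I}(\xi))=\mc{I}(\phi_t\xi)$ --- cannot yield it. They only show that any two points of a class remain within Sasaki distance $Q(M)$ of each other for all time, which controls covering numbers at scales $\epsilon>Q(M)$ and says nothing below that scale: a horseshoe contained in a ball of radius $Q(M)$ has all orbits staying within $2Q(M)$ of one another forever, yet carries positive entropy on small dynamical balls. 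Since both $h(\Phi,\mc{I}(\xi))$ and the conditional entropy are defined through a limit $\epsilon\to 0$, the diameter bound alone cannot close the argument; likewise your claim that partitions refining the fiber structure have ``uniformly bounded Rokhlin entropy'' is not a consequence of boundedness of the classes.

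What is needed, and what the proof in \cite{mam24} supplies, is finer control of the classes than boundedness: an estimate of the $(\epsilon,T)$-spanning numbers of $\mc{I}(\xi)$ for every fixed small $\epsilon>0$, subexponential in $T$, obtained from the specific structure of $\mc{I}(\xi)=\mc{F}^s(\xi)\cap\mc{F}^u(\xi)$ in the visibility setting (invariance and continuity of the horospherical foliations, the uniform geometry of the lifted classes in Proposition \ref{morse}, and the behavior of bi-asymptotic geodesics in $\T{M}$), which is then fed into Bowen's inequality and into the Ledrappier--Walters formula. As written, your proposal establishes $h(\Psi)\le h(\Phi)$ and $h_{\chi_*\mu}(\psi_1)\le h_{\mu}(\phi_1)$, and correctly derives (2) from (1), but the reverse inequalities --- the actual content of item (1) --- remain unproven.
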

We denote by $\mc{M}(T_1M)$ and $\mc{M}(X)$ the spaces of Borel probability measures on $T_1M$ and $X$. 
\begin{pro}\label{exis}
Let $M$ be a closed manifold without conjugate points and with visibility universal covering, $(X,\Psi)$ be the quotient flow and $\chi$ be the quotient map. Given a $\Psi$-invariant measure $\nu$ on $X$ there exists a $\Phi$-invariant measure $\T{\nu}$ on $T_1M$ such that $\chi_*\T{\nu}=\nu$.
\end{pro}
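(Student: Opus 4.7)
The plan is to lift $\nu$ through $\chi$ to any (not necessarily invariant) probability measure on $T_1M$ and then average along the geodesic flow in the Krylov--Bogolyubov fashion; the $\Psi$-invariance of $\nu$ together with the semi-conjugacy relation $\chi\circ\phi_t=\psi_t\circ\chi$ will force the limit to project back onto $\nu$.

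First I would verify that the pushforward map $\chi_\ast:\mc{M}(T_1M)\to\mc{M}(X)$ is surjective. Since $\chi$ is continuous and surjective between compact metric spaces, $\chi_\ast$ is weak-$\ast$ continuous, so $\chi_\ast(\mc{M}(T_1M))$ is a weak-$\ast$ compact, convex subset of $\mc{M}(X)$. For every $x\in X$ pick $\xi\in\chi^{-1}(x)$; then $\chi_\ast\delta_\xi=\delta_x$, so every Dirac measure on $X$ lies in $\chi_\ast(\mc{M}(T_1M))$. Since the closed convex hull of the Diracs is all of $\mc{M}(X)$, surjectivity follows. Thus I may choose some $\mu_0\in\mc{M}(T_1M)$ with $\chi_\ast\mu_0=\nu$.

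Next, define for each $T>0$ the averaged measure
\[
\mu_T=\frac{1}{T}\int_0^T(\phi_s)_\ast\mu_0\,ds,
\]
understood in the weak-$\ast$ sense. By compactness of $\mc{M}(T_1M)$ there is a sequence $T_n\to\infty$ along which $\mu_{T_n}\to\T\nu$ weak-$\ast$. The standard Krylov--Bogolyubov argument (the defect $(\phi_t)_\ast\mu_T-\mu_T$ is supported, up to sign, on intervals of total length $2t$, hence vanishes in the limit) shows that $\T\nu$ is $\Phi$-invariant.

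The final step is to check $\chi_\ast\T\nu=\nu$. Using $\chi\circ\phi_s=\psi_s\circ\chi$, the functoriality of pushforward, and $\Psi$-invariance of $\nu$, I compute for every $T>0$
\[
\chi_\ast\mu_T=\frac{1}{T}\int_0^T(\psi_s)_\ast\chi_\ast\mu_0\,ds=\frac{1}{T}\int_0^T(\psi_s)_\ast\nu\,ds=\nu.
\]
Since $\chi_\ast$ is weak-$\ast$ continuous, $\chi_\ast\T\nu=\lim_n\chi_\ast\mu_{T_n}=\nu$, completing the proof. The only conceptually non-routine point is the surjectivity of $\chi_\ast$, for which the Krein--Milman / Dirac-measure argument above is the cleanest substitute for invoking a Borel section $s:X\to T_1M$ of $\chi$ (which would also work by Kuratowski--Ryll-Nardzewski, setting $\mu_0=s_\ast\nu$); everything else is the familiar averaging construction.
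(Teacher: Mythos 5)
Your proof is correct and follows essentially the same route as the paper: lift $\nu$ to an arbitrary measure via surjectivity of $\chi_*$, take Krylov--Bogolyubov averages along $\Phi$, and use the semi-conjugacy plus $\Psi$-invariance to see that every average (hence the weak$^*$ limit) still projects to $\nu$. The only difference is that you supply an explicit justification of the surjectivity of $\chi_*$ (via Dirac measures and convexity), which the paper simply asserts.
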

\begin{proof}
First, recall that $\chi_*:\mc{M}(T_1M)\to \mc{M}(X)$ is an affine continuous surjective map because $\chi$ is continuous and surjective. Thus, there exists $\sigma\in \mc{M}(T_1M)$ such that $\chi_*\sigma=\nu$ but which may not be $\Phi$-invariant. For every $T>0$, define the average measure
\[  \nu_T=\frac{1}{T}\int_0^T(\phi_t)_*\sigma dt. \]
Clearly, $\nu_T$ is a Borel probability measure on $T_1M$. Using Equation \eqref{for} and $\Psi$-invariance of $\nu$, for every $T>0$ we obtain
\[  \chi_*\nu_T = \frac{1}{T}\int_0^T (\chi \circ \phi_t)_*\sigma dt = \frac{1}{T}\int_0^T (\psi_t\circ \chi)_*\sigma dt = \frac{1}{T}\int_0^T (\psi_t)_*\circ \chi_*\sigma dt=\frac{1}{T}\int_0^T (\psi_t)_*\nu dt = \nu.\]
Observe that preimage $(\chi_*)^{-1}\nu$ is a non-empty and compact set in the weak$^*$ topology. So, there exists an accumulation point of $(\nu_T)_{T>0}$ denoted by $\T{\nu}$. Since $\nu_T\in (\chi_*)^{-1}\nu$ for every $T>0$, compactness implies that $\T{\nu}\in (\chi_*)^{-1}\nu$ hence $\chi_*\T{\nu}=\nu$. Finally, we show the $\Phi$-invariance of $\T{\nu}$:
\begin{align*}
    |(\phi_s)_*\T{\nu}-\T{\nu}|&= \lim_{T_n}|(\phi_s)_*\nu_{T_n}-\nu_{T_n}|= \lim_{T_n}\left| \frac{1}{T_n}\int_0^{T_n}(\phi_{t+s})_*\sigma dt-\frac{1}{T_n}\int_0^{T_n}(\phi_{t})_*\sigma dt\right| \\
    &= \lim_{T_n}\left|\frac{1}{T_n}\int_s^{T_n+s}(\phi_{\tau})_*\sigma d\tau-\frac{1}{T_n}\int_0^{T_n}(\phi_{t})_*\sigma dt\right|= \lim_{T_n}\left|\frac{1}{T_n}\int_{[0,s]\cup [T_n,T_n+s]} (\phi_t)_*\sigma dt\right|.
\end{align*}
So, for every Borel set $A\subset T_1M$ we have
\[ |(\phi_s)_*\T{\nu}(A)-\T{\nu}(A)| = \lim_{T_n}\left|\frac{1}{T_n}\int_{[0,s]\cup [T_n,T_n+s]} \sigma(\phi_{t}^{-1}(A)) dt\right| \leq \lim_{T_n\to \infty}\frac{2s}{T_n} = 0. \]
\end{proof}

\begin{pro}\label{proy}
Let $M$ be a closed manifold without conjugate points and with visibility universal covering, $(X,\Psi)$ be the quotient flow, $\chi$ be the quotient map, $f\in \mc{H}$ and $\T{f}$ be the induced potential. If $\mu$ is an equilibrium measure for $f$ then so is $\chi_*\mu$ for $\T{f}$. 
\end{pro}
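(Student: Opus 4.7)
The plan is to prove equality of the topological pressures $P(f,\Phi)=P(\tilde f,\Psi)$ first, and then observe that $\chi_*\mu$ realizes the supremum on the $\Psi$-side whenever $\mu$ realizes it on the $\Phi$-side. The two key identities driving everything are the entropy preservation $h_\mu(\phi_1)=h_{\chi_*\mu}(\psi_1)$ from Proposition \ref{sur}(1) and the integral identity $\int f\,d\mu=\int \tilde f\,d(\chi_*\mu)$ from Equation \eqref{des}. Both combine to give, for every $\Phi$-invariant $\mu$,
\[
P_\mu(f,\Phi)=h_\mu(\phi_1)+\int f\,d\mu = h_{\chi_*\mu}(\psi_1)+\int \tilde f\,d(\chi_*\mu) = P_{\chi_*\mu}(\tilde f,\Psi).
\]

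Taking suprema gives $P(f,\Phi)\leq P(\tilde f,\Psi)$ immediately, since every $\chi_*\mu$ is a $\Psi$-invariant probability measure on $X$. For the reverse inequality I would use Proposition \ref{exis}: any $\Psi$-invariant $\nu$ lifts to a $\Phi$-invariant $\tilde\nu$ with $\chi_*\tilde\nu=\nu$, so applying the same pair of identities to $\tilde\nu$ yields $P_\nu(\tilde f,\Psi)=P_{\tilde\nu}(f,\Phi)\leq P(f,\Phi)$, and taking a supremum over $\nu$ gives the other inequality. Hence $P(f,\Phi)=P(\tilde f,\Psi)$.

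Now let $\mu$ be an equilibrium measure for $f$. Then $\chi_*\mu$ is $\Psi$-invariant (this is routine from the semiconjugacy $\psi_t\circ\chi=\chi\circ\phi_t$ of Equation \eqref{for}), and the displayed equality above gives
\[
P_{\chi_*\mu}(\tilde f,\Psi)=P_\mu(f,\Phi)=P(f,\Phi)=P(\tilde f,\Psi),
\]
so $\chi_*\mu$ attains the variational supremum for $\tilde f$ and is therefore an equilibrium measure for $(\tilde f,\Psi)$.

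The only subtle step is the reverse pressure inequality, since it requires that genuinely every $\Psi$-invariant measure be reachable as a push-forward of a $\Phi$-invariant measure; this is exactly what Proposition \ref{exis} provides, and is the place where the construction of averaged weak$^*$ accumulation points is essential. Everything else is bookkeeping on top of the preservation of entropy and of the integral of $f$ under $\chi_*$, both of which have already been established.
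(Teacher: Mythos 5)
Your proposal is correct and follows essentially the same route as the paper: equality of metric pressures via Proposition \ref{sur}(1) and Equation \eqref{des}, equality of topological pressures using Proposition \ref{exis} for the reverse inequality, and then the variational principle to conclude that $\chi_*\mu$ is an equilibrium measure for $\T{f}$.
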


\begin{proof}
Since $\chi$ is a semi-conjugacy it follows that $\chi_*\mu$ is a $\Psi$-invariant measure on $X$. Using Proposition \ref{sur}(1) and Equation \eqref{des} we see that metric pressures of $\mu$ and $\chi_*\mu$ agree:
\begin{equation}\label{pr}
P_{\mu}(f,\Phi)=h_{\mu}(\phi_1)+\int_{T_1M} fd\mu=h_{\chi_*\mu}(\psi_1)+\int_X \T{f} d(\chi_*\mu)= P_{\chi_*\mu}(\T{f},\Psi)
\end{equation}
Moreover, topological pressures of $f$ and $\T{f}$ also agree. Equation \eqref{pr} gives the direct inequality:
\begin{equation}\label{pres}
    P(f,\Phi)=\sup_{\mu\in \mc{M}(\Phi)}P_{\mu}(f,\Phi) = \sup_{\mu\in \mc{M}(\Phi)}P_{\chi_*\mu}(\T{f},\Psi)\leq \sup_{\nu\in\mc{M}(\Psi)}P_{\nu}(\T{f},\Psi)=P(\T{f},\Psi).
\end{equation}
For the reverse inequality, Proposition \ref{exis} says that for every $\nu\in\mc{M}(\Psi)$ there exists $\T{\nu}\in \mc{M}(\Phi)$ such that $\chi_*\T{\nu}=\nu$ and hence $P_{\T{\nu}}(f,\Phi)=P_{\nu}(\T{f},\Psi)$. From this we see that 
\[  P(\T{f},\Psi)=\sup_{\nu\in\mc{M}(\Psi)}P_{\nu}(\T{f},\Psi) = \sup_{\T{\nu}\in \mc{M}(\Phi)}P_{\T{\nu}}(f,\Psi)\leq \sup_{\mu\in \mc{M}(\Phi)}P_{\mu}(f,\Phi)=P(f,\Phi).\]
We thus obtain 
\begin{equation}\label{preso}
    P(f,\Phi)=P(\T{f},\Psi).
\end{equation} 
By hypothesis and Equation \eqref{pr} we have $P_{\chi_*\mu}(\T{f},\Psi)=P_{\mu}(f,\Phi)=P(f,\Phi)=P(\T{f},\Psi)$. Therefore $\chi_*\mu$ is an equilibrium measure for $\T{f}$.
\end{proof}

\section{Uniqueness of equilibrium measures}\label{uni}

In this section, under some particular conditions we prove uniqueness of equilibrium measures for the family of potentials $\mc{H}$ considered in Section \ref{pot}.

Let $M$ be a closed manifold without conjugate points, $\Phi(\phi_t:T_1M\to T_1M)$ be its geodesic flow and $f:T_1M\to \mb{R}$ be a potential. Recall that $\mc{R}_0$ is the set of expansive points. We say that $f$ has the pressure-gap if 
\[ \sup_{\mu} \{P_{\mu}(f,\Phi):\mu \text{ is supported on } T_1M\setminus \mc{R}_0 \}<P(f,\Phi), \]
where $P_{\mu}(f,\Phi)$ and $P(f,\Phi)$ are the metric and topological pressure respectively. For short we denote by $\mc{R}_0^c$ the set $T_1M\setminus \mc{R}_0$, i.e., the non-expansive set.
\begin{lem}
Let $M$ be a closed manifold without conjugate points and with visibility universal covering. If $f\in \mc{H}$ has the pressure-gap then $\mc{R}_0^c$ cannot support an equilibrium measure of $f$.
\end{lem}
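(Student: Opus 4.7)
The statement is essentially an immediate consequence of the definition of the pressure gap, so the plan is to argue by contradiction. I would suppose that $\mu$ is an equilibrium measure for $f$ whose support lies in $\mc{R}_0^c$, and then derive a strict inequality $P_\mu(f,\Phi)<P(f,\Phi)$, contradicting the defining property of an equilibrium measure.

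First, I would verify a small measurability/invariance preliminary, namely that $\mc{R}_0^c$ is Borel and $\Phi$-invariant, so that it legitimately supports a $\Phi$-invariant probability measure. Invariance of $\mc{R}_0$ follows from the flow-invariance of the horospherical foliations $\mc{F}^s,\mc{F}^u$ (Proposition \ref{horo}), which gives $\mc{I}(\phi_t(\theta))=\phi_t(\mc{I}(\theta))$, so the singleton condition $\mc{I}(\theta)=\{\theta\}$ is preserved under the flow; measurability follows since $\mc{R}_0$ is defined by a Borel condition on the diameter of $\mc{I}(\theta)$, which by Proposition \ref{morse} is uniformly bounded.

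Next, since $\mu$ is a $\Phi$-invariant probability measure with $\mathrm{supp}(\mu)\subset \mc{R}_0^c$, it belongs to the class of measures over which the supremum in the pressure-gap condition is taken. Therefore
\[
P_\mu(f,\Phi)\;\leq\;\sup\{P_\nu(f,\Phi):\nu\text{ is }\Phi\text{-invariant and supported on }\mc{R}_0^c\}\;<\;P(f,\Phi),
\]
where the strict inequality is exactly the pressure-gap hypothesis. On the other hand, the assumption that $\mu$ is an equilibrium measure for $f$ forces $P_\mu(f,\Phi)=P(f,\Phi)$, contradicting the previous display. This contradiction shows that no equilibrium measure for $f$ can be supported on $\mc{R}_0^c$.

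There is no real obstacle here beyond making the invariance bookkeeping explicit; the argument is definitional once one recognizes that the pressure gap is designed precisely to rule out this possibility. The only care needed is the minor observation that $\mc{R}_0^c$ is a $\Phi$-invariant Borel set, so that restricting the variational principle to measures supported there is meaningful, and that the hypothesis $f\in\mc{H}$ plays no role in this particular lemma beyond ensuring the metric and topological pressures appearing above are well defined.
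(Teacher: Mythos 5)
Your proposal is correct and is essentially the same argument as the paper's: assume an equilibrium measure supported on $\mc{R}_0^c$, note its metric pressure is bounded by the supremum in the pressure-gap condition, and contradict $P_\mu(f,\Phi)=P(f,\Phi)$. The extra remarks on invariance and measurability of $\mc{R}_0^c$ are harmless additions but not needed for the paper's one-line contrapositive.
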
 
\begin{proof}
Suppose there exists an equilibrium measure $\mu$ of $f\in \mc{H}$ supported on $\mc{R}_0^c$. Thus, $P(f,\Phi)=P_{\mu}(f,\Phi)\leq \sup_{\mu} \{P_{\mu}(f,\Phi):Supp(\mu)\subset \mc{R}^c_0 \}$ and hence $f$ has no pressure-gap.
\end{proof}
Therefore, the statement of the conclusion of this lemma is weaker than the pressure-gap of $f$. Thus, we say that $f$ has the weak pressure-gap if $\mc{R}_0^c$ cannot support an equilibrium measure of $f$. We will work with this weaker hypothesis instead of the classical one.
\begin{teo}\label{toe}
    Let $M$ be a closed manifold without conjugate points and with visibility universal covering. If $f\in \mc{H}$ has the weak pressure gap then $f$ has a unique equilibrium measure $\mu$. In particular, $\mu$ is ergodic. 
\end{teo}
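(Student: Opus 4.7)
The plan is to transfer uniqueness from the factor flow $(X,\Psi)$ back to the geodesic flow $\Phi$ via the quotient map $\chi$. By Proposition~\ref{indu} the induced potential $\T{f}$ is Bowen on $X$, and by Proposition~\ref{spe} the factor flow is expansive with the specification property; hence Corollary~\ref{frank} furnishes a unique equilibrium measure $\nu$ for $\T{f}$. If $\mu$ is any equilibrium measure of $f$, then $\chi_*\mu$ is an equilibrium measure for $\T{f}$ by Proposition~\ref{proy}, forcing $\chi_*\mu=\nu$. All equilibrium measures of $f$ therefore share the same projection, and the job reduces to ruling out distinct lifts of $\nu$.

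The weak pressure gap enters through the observation that $\chi$ is injective on $\mc{R}_0$. I first establish that every equilibrium measure $\mu$ of $f$ satisfies $\mu(\mc{R}_0)=1$. Consider the ergodic decomposition $\mu=\int \mu_\omega\,d\mu(\omega)$. Affinity of metric entropy along ergodic decompositions, together with linearity of integration, gives $P_\mu(f,\Phi)=\int P_{\mu_\omega}(f,\Phi)\,d\mu(\omega)$. Since $P_{\mu_\omega}(f,\Phi)\le P(f,\Phi)$ for every $\omega$ while $\mu$ attains the topological pressure, $\mu_\omega$ is an equilibrium measure for $\mu$-a.e.\ $\omega$. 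As $\mc{R}_0$ is a $\Phi$-invariant Borel set and $\mu_\omega$ is ergodic, $\mu_\omega(\mc{R}_0)\in\{0,1\}$; the case $\mu_\omega(\mc{R}_0)=0$ would produce an ergodic equilibrium measure supported on $\mc{R}_0^c$, contradicting the weak pressure gap. Integrating gives $\mu(\mc{R}_0)=1$.

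Finally, I invoke the Rokhlin disintegration recalled at the end of Section~\ref{pot}, writing $\mu=\int \mu_x\,d(\chi_*\mu)(x)$ with $\mu_x$ supported on the fiber $\chi^{-1}(x)=\mc{I}(\xi)$. Since $\mu(\mc{R}_0)=1$, the pushforward $\nu=\chi_*\mu$ is concentrated on $\chi(\mc{R}_0)$, and for $\nu$-a.e.\ $x$ the fiber $\chi^{-1}(x)$ is a singleton, forcing $\mu_x$ to be a Dirac mass. Therefore $\mu$ is completely determined by $\nu$, proving uniqueness. Ergodicity of this unique $\mu$ is then automatic: by the above, almost every component of its ergodic decomposition is an equilibrium measure, which must equal $\mu$ by uniqueness, so the decomposition is trivial.

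The main obstacle I anticipate is verifying the measure-theoretic hygiene, namely that $\mc{R}_0$ is $\Phi$-invariant and Borel, that metric entropy is affine along the ergodic decomposition (a theorem of Jacobs), and that singleton fibers force the Rokhlin conditionals to be Dirac masses; none of these is deep but each requires either a careful citation or a short justification. The geometric content of the argument is concentrated entirely in Propositions~\ref{indu} and~\ref{proy} and in the weak pressure gap, which together compress every equilibrium measure onto the set where $\chi$ acts as a measurable isomorphism.
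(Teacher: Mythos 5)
Your argument is correct and follows essentially the same route as the paper: ergodic decomposition plus the weak pressure gap force $\mu(\mc{R}_0)=1$, Corollary~\ref{frank} and Proposition~\ref{proy} pin down the projection $\chi_*\mu=\nu$, and bijectivity of $\chi$ on $\mc{R}_0$ (which you make precise via the Rokhlin disintegration) determines $\mu$, with ergodicity following as you say. The only point you omit is existence of an equilibrium measure, which the theorem also asserts; the paper dispatches it in one line from $C^{\infty}$ smoothness of the geodesic flow (upper semicontinuity of entropy), and alternatively you could produce one by lifting $\nu$ via Proposition~\ref{exis} together with $P(f,\Phi)=P(\T{f},\Psi)$.
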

\begin{proof}
By hypothesis the geodesic flow is $C^{\infty}$ and hence $f$ always has an equilibrium measure. For the uniqueness, let $\mu\in \mc{M}(\Phi)$ be any equilibrium measure for $f$ and $\mu_e$ be any ergodic component of $\mu$. It follows that $\mu_e$ is an equilibrium measure for $f$ as well. As $\mc{R}_0$ is $\Phi$-invariant, ergodicity provides that $\mu_e$ is supported either on $\mc{R}_0$ or on $\mc{R}_0^c$. The weak pressure-gap of $f$ implies that $\mu_e$ is supported on $\mc{R}_0$. Since all ergodic components of $\mu$ are supported on $\mc{R}_0$ so does $\mu$. From Section \ref{pot}, we consider the induced potential $\T{f}:X\to \mb{R}$ of $f$. By Corollary \ref{frank}, $\T{f}$ has a unique equilibrium measure $\nu$ with respect to the quotient flow $\Psi$. This together with Proposition \ref{proy} implies that $\mu$ projects on $\nu$: $\chi_*\mu=\nu$. Since $\mu$ is supported on $\mc{R}_0$ and $\chi$ is a bijection on $\mc{R}_0$, it follows that $\mu$ is completely determined by $\nu$. Therefore, every equilibrium measure of $f$ has the same form and the uniqueness follows.
\end{proof}
Next, we give a criterion guaranteeing full support for any equilibrium measure of $f\in \mc{H}$. First, recall the quotient flow $\Psi(\psi_t: X\to X)$ acting on the quotient space $X$ and the quotient map $\chi:T_1M\to X$. A subset $A\subset T_1M$ is saturated with respect to $\chi$ if $A=\chi^{-1}\circ \chi(A)$. The following result is a straightforward consequence of Lemma 3.1 of \cite{mam23}.  
\begin{lem}\label{sat}
Let $M$ be a closed manifold without conjugate points and with visibility universal covering. Then, for every open set $U\subset T_1M$ there exists an open saturated set $U'\subset U$. In particular, $\chi(U')$ is an open set in $X$.
\end{lem}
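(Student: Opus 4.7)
The plan is to take as $U'$ the interior saturation of $U$, namely
\[ U' = \{\xi \in T_1M : \mc{I}(\xi) \subset U\}. \]
Saturation and the inclusion $U' \subset U$ are then immediate from the definition: if $\xi \in U'$ and $\eta \in \mc{I}(\xi)$, then $\mc{I}(\eta) = \mc{I}(\xi) \subset U$, so $\eta \in U'$; and $\xi \in \mc{I}(\xi) \subset U$ gives $U' \subset U$.

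The substantive point is to show that $U'$ is open, for which I would argue by contradiction using upper semicontinuity of the class map $\xi \mapsto \mc{I}(\xi)$. Suppose $\xi \in U'$ but no neighborhood of $\xi$ is contained in $U'$; then there exist $\xi_n \to \xi$ and $\zeta_n \in \mc{I}(\xi_n) \setminus U$. By compactness of $T_1M$, after passing to a subsequence $\zeta_n \to \zeta$, and $\zeta \notin U$ because $T_1M \setminus U$ is closed. Since $\zeta_n \in \mc{F}^s(\xi_n) \cap \mc{F}^u(\xi_n)$ and the horospherical foliations $\mc{F}^s$, $\mc{F}^u$ are continuous (Theorem \ref{v1}), taking limits yields $\zeta \in \mc{F}^s(\xi) \cap \mc{F}^u(\xi) = \mc{I}(\xi) \subset U$, a contradiction. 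This upper semicontinuity is precisely the content of Lemma 3.1 of \cite{mam23} that is being invoked, so the appeal should be short.

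Once $U'$ is known to be open and, by construction, saturated, the final assertion is a formal consequence of the quotient topology: saturation gives $U' = \chi^{-1}(\chi(U'))$, and this set being open in $T_1M$ is exactly the condition for $\chi(U')$ to be open in $X$. The only step carrying real geometric content is the openness of $U'$, where one uses continuity of the horospherical foliations together with boundedness of classes from Proposition \ref{morse}; the remaining arguments are formal bookkeeping with the equivalence relation and the quotient topology.
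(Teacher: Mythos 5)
Your overall construction is the right one and matches what the paper intends (the paper itself gives no argument beyond citing Lemma 3.1 of \cite{mam23}): take the saturated interior $U'=\{\xi: \mc{I}(\xi)\subset U\}$, note saturation and $U'\subset U$ are formal, reduce openness of $\chi(U')$ to the definition of the quotient topology, and locate all the content in the upper semicontinuity (closedness of the graph) of $\xi\mapsto\mc{I}(\xi)$. That bookkeeping is fine.

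However, the justification you offer for the key step would fail as written. You pass to the limit directly in $T_1M$: from $\zeta_n\in\mc{F}^s(\xi_n)\cap\mc{F}^u(\xi_n)$, $\xi_n\to\xi$, $\zeta_n\to\zeta$ you conclude $\zeta\in\mc{F}^s(\xi)\cap\mc{F}^u(\xi)$ by ``continuity of the foliations.'' But by Theorem \ref{v1} the foliations $\mc{F}^s,\mc{F}^u$ on $T_1M$ are \emph{minimal}, so every leaf is dense, and leaf membership is not a closed relation there: taking $\xi_n=\xi$ constant and $\zeta_n\in\mc{F}^s(\xi)$ converging to an arbitrary $\zeta$ would otherwise force $\mc{F}^s(\xi)=T_1M$. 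Continuity of a foliation controls plaques in local charts, not the global leaf through a fixed point. The correct route is to lift: choose lifts $\T{\xi}_n\to\T{\xi}$ in $T_1\T{M}$ and lifts $\T{\zeta}_n\in\mc{\T{I}}(\T{\xi}_n)$ of $\zeta_n$; Proposition \ref{morse} gives $d(\T{\zeta}_n,\T{\xi}_n)\leq Q(M)$, so $(\T{\zeta}_n)$ stays in a compact set and a subsequence converges to a lift $\T{\zeta}$ of $\zeta$. In $T_1\T{M}$ the horospherical leaves are closed embedded submanifolds depending continuously on the base point, so $\T{\zeta}\in\mc{\T{F}}^s(\T{\xi})\cap\mc{\T{F}}^u(\T{\xi})=\mc{\T{I}}(\T{\xi})$, and projecting by $d\pi$ (which maps $\mc{\T{I}}(\T{\xi})$ onto $\mc{I}(\xi)$) yields $\zeta\in\mc{I}(\xi)\subset U$, the desired contradiction. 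Equivalently, you may simply invoke Lemma 3.1 of \cite{mam23} as the paper does, but then the sketched ``short appeal'' to continuity of $\mc{F}^s,\mc{F}^u$ on $T_1M$ should be deleted or replaced by the lifting argument above, in which Proposition \ref{morse} is genuinely used rather than mentioned in passing.
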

The following is a restatement of Proposition 7.3.15 of \cite{fish19} in our context. 
\begin{lem}\label{gibs}
Let $\Psi:X\to X$ be a continuous flow on a compact metric space $X$, $f:X\to \mathbb{R}$ be a Bowen potential and $\nu$ be an equilibrium measure of $f$. If $\Psi$ is expansive and has the specification property then $\nu$ has the Gibbs property for $f$ with constant $P(f,\Psi)$. In particular, $\nu$ has full support.
\end{lem}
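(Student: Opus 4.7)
The plan is to invoke the classical Bowen--Ruelle argument for the Gibbs property of equilibrium measures under expansivity and specification, which is exactly the content of Proposition 7.3.15 of \cite{fish19}. Since the hypotheses of the lemma match the hypotheses of that proposition, no adaptation is required beyond unpacking definitions. Concretely, the goal is to produce constants $K>0$ and $\epsilon>0$ such that for every $x\in X$ and every $T>0$,
\[
K^{-1}\leq \frac{\nu(B(x,\epsilon,T))}{\exp\bigl(-TP(f,\Psi)+\int_0^T f(\psi_t x)\,dt\bigr)}\leq K.
\]

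For the upper bound I would fix $\epsilon$ smaller than the Bowen constant of $f$ (so that Birkhoff integrals of $f$ vary by at most the Bowen constant $C$ on $(\epsilon,T)$-dynamical balls) and simultaneously smaller than an expansivity constant of $\Psi$. A maximal $(2\epsilon,T)$-separated set $F_T$ is then $(2\epsilon,T)$-spanning, and each $B(x,\epsilon,T)$ meets at most one element of $F_T$. Combining the Bowen property with the standard pressure-partition-function estimate $\sum_{y\in F_T}e^{\int_0^Tf(\psi_s y)ds}\asymp e^{TP(f,\Psi)}$, one obtains $\nu(B(x,\epsilon,T))\lesssim e^{-TP(f,\Psi)+\int_0^Tf(\psi_t x)dt}$.

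For the lower bound I would use specification to build, for each $(\epsilon,T)$-dynamical ball $B(x,\epsilon,T)$, an orbit segment approximating it, and then paste many such pseudo-orbits together with bounded gaps $\tau(\epsilon)$ to produce sufficiently many well-separated orbits through neighborhoods of $B(x,\epsilon,T)$. Counting these orbits and using the Bowen property to control the integrals of $f$ over each segment yields the inequality $\nu(B(x,\epsilon,T))\gtrsim e^{-TP(f,\Psi)+\int_0^Tf(\psi_t x)dt}$; here one uses that $\nu$, being an equilibrium state, realises the supremum in the variational principle and therefore gives nontrivial mass to every dynamical ball, in the quantitative form dictated by specification. Full support then follows immediately: any nonempty open $U\subset X$ contains a dynamical ball $B(x,\epsilon,T)$ for some $T>0$, and the lower Gibbs bound gives $\nu(U)\geq \nu(B(x,\epsilon,T))>0$.

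The main obstacle in writing this out in detail would be the lower bound, since the pasting-via-specification argument requires delicate bookkeeping of the transition time $\tau(\epsilon)$ and of the Bowen constant to ensure the errors do not degrade the exponential rate. However, all of this is carried out in full in \cite{fish19}, and since we are already in the setting of a continuous expansive flow with specification on a compact metric space acting with a Bowen potential, the statement is a direct application of that proposition rather than a new argument.
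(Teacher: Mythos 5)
Your proposal is correct and matches the paper's treatment: the paper gives no independent argument either, stating the lemma as a direct restatement of Proposition 7.3.15 of \cite{fish19}, which is exactly the citation you rely on. The sketch of the Bowen--Franco counting argument you include is the standard proof underlying that proposition, so nothing further is needed.
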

The following is an extension of Lemma 10.2 of \cite{mam24} to the case of equilibrium measures.
\begin{lem}\label{supp}
Let $M$ be a closed manifold without conjugate points and with visibility universal covering. If $\mc{R}_0$ is dense then any equilibrium measure of $f\in \mc{H}$ has full support.
\end{lem}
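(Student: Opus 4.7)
The plan is to push the problem to the quotient $X$, where the machinery of Section \ref{uni} gives full support for free, and then pull information back through $\chi$ using saturated open sets.

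First I would record what is already known about the projection $\chi_{*}\mu$. By Proposition \ref{proy}, if $\mu$ is an equilibrium measure of $f\in\mc{H}$ on $T_1M$ then $\chi_{*}\mu$ is an equilibrium measure of the induced potential $\T{f}$ with respect to the quotient flow $\Psi$ on $X$. By Proposition \ref{indu}, $\T{f}$ is a Bowen potential, and by Proposition \ref{spe}, $\Psi$ is expansive and has the specification property. Hence Lemma \ref{gibs} applies and yields that $\chi_{*}\mu$ enjoys the Gibbs property for $\T{f}$; in particular, $\chi_{*}\mu$ has full support in $X$.

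Second, I would translate this full support statement back to $T_1M$. Fix a non-empty open set $U\subset T_1M$; I want to show $\mu(U)>0$. The density hypothesis on $\mc{R}_0$ gives a point $\theta\in U\cap\mc{R}_0$, so $\mc{I}(\theta)=\{\theta\}$. The role of density is to guarantee that the conclusion of Lemma \ref{sat} produces a \emph{non-empty} open saturated subset $U'\subset U$: one can start from a small basis neighborhood $A\in\mc{A}(\theta)$ (Proposition \ref{que1}) sitting inside $U$, whose image $\chi(A)$ is open in $X$, and extract $U'=\chi^{-1}(\chi(A))\cap U$ (or directly invoke Lemma \ref{sat}) to obtain a non-empty open saturated $U'\subset U$. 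Because $U'$ is saturated, $U'=\chi^{-1}(\chi(U'))$, and $\chi(U')$ is a non-empty open subset of $X$.

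The computation then closes in one line using the definition of push-forward:
\[
\mu(U)\;\geq\;\mu(U')\;=\;\mu\bigl(\chi^{-1}(\chi(U'))\bigr)\;=\;(\chi_{*}\mu)(\chi(U'))\;>\;0,
\]
where the final inequality uses the full support of $\chi_{*}\mu$ established in the first step. This gives $\mu(U)>0$ for every non-empty open $U\subset T_1M$, which is the desired conclusion.

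The main obstacle I foresee is not the pressure/Gibbs part, which is a clean application of results already in place, but rather ensuring that the open saturated $U'\subset U$ supplied by Lemma \ref{sat} is actually non-empty. This is the only place where the hypothesis ``$\mc{R}_0$ dense'' is essential: without an expansive point inside $U$, the saturation of any small neighborhood could sweep out of $U$ because nearby classes $\mc{I}(\cdot)$ need not be arbitrarily small (they are only upper semicontinuous in a very weak sense). The density assumption forces $\theta\in U\cap\mc{R}_0$, and near this expansive point Proposition \ref{que1} provides a basis of neighborhoods of $\{\theta\}$ whose images are open in $X$, which is precisely what makes the saturated subset inside $U$ non-empty.
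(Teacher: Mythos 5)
Your proposal is correct and follows essentially the same route as the paper: project $\mu$ to $X$ via Proposition \ref{proy}, get full support of $\chi_*\mu$ from Proposition \ref{spe} and Lemma \ref{gibs}, and pull back through a non-empty open saturated set $U'\subset U$ whose existence uses Lemma \ref{sat} together with the density of $\mc{R}_0$. One small caution on your alternative construction: $\chi^{-1}(\chi(A))\cap U$ need not be saturated (a class meeting it can leave $U$), so for the identity $\mu(U')=\chi_*\mu(\chi(U'))$ you should rely on the saturated set furnished directly by Lemma \ref{sat}, exactly as the paper does.
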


\begin{proof}
Let $U\subset T_1M$ be an open set and $\mu$ be an equilibrium measure of $f$. By Lemma \ref{sat} there is an open saturated set $U'\subset U$. We observe that $U'$ could be empty in general. However, density of $\mc{R}_0$ ensures that $U'$ is non-empty and hence $\chi(U')\subset X$ is a non-empty open set. From Proposition \ref{proy} it follows that $\chi_*\mu$ is an equilibrium measure of $\T{f}$. Proposition \ref{spe} and Lemma \ref{gibs} imply that $\chi_*\mu$ has full support and hence $\chi_*\mu(\chi(U'))>0$. Since $U'$ is saturated, the conclusion follows from $\mu(U)\geq \mu(U')=\mu(\chi^{-1}\circ \chi (U'))=\chi_*\mu(\chi(U'))>0$.
\end{proof}

\begin{teo}\label{int}
Let $M$ be a closed manifold without conjugate points and with visibility universal covering. If $Int(\mc{R}_0)\neq \emptyset$ then every $f\in \mc{H}$ has a unique equilibrium measure. This measure is ergodic and fully supported.    
\end{teo}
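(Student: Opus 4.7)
The plan is to reduce the theorem to Theorem \ref{toe} and Lemma \ref{supp}, both of which become applicable once I establish that $\mc{R}_0$ is dense in $T_1M$. The hypothesis $\mathrm{Int}(\mc{R}_0)\neq \emptyset$ will play a dual role: it forces $\mc{R}_0$ to be dense (delivering full support via Lemma \ref{supp}), and the same full support then makes the weak pressure gap automatic, opening the way to Theorem \ref{toe} for uniqueness and ergodicity.

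First I would verify that $\mc{R}_0$ is $\phi_t$-invariant. By Proposition \ref{horo} the horospherical leaves satisfy $\phi_t(\mc{F}^*(\theta)) = \mc{F}^*(\phi_t(\theta))$ for $*=s,u$, so $\phi_t(\mc{I}(\theta)) = \mc{I}(\phi_t(\theta))$, and $\theta \in \mc{R}_0$ if and only if $\phi_t(\theta) \in \mc{R}_0$. Combined with the topological mixing of the geodesic flow from Theorem \ref{v1}, this yields density: for any non-empty open $V \subset T_1M$, mixing provides $T > 0$ with $\phi_T(\mathrm{Int}(\mc{R}_0)) \cap V \neq \emptyset$, and by invariance $\phi_T(\mathrm{Int}(\mc{R}_0)) \subset \mc{R}_0$, so $\mc{R}_0 \cap V \neq \emptyset$. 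Thus $\mc{R}_0$ is dense.

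With density of $\mc{R}_0$ in hand, Lemma \ref{supp} guarantees that every equilibrium measure of every $f \in \mc{H}$ has full support in $T_1M$. To apply Theorem \ref{toe} I still need the weak pressure gap, but this now follows by contradiction: if some equilibrium measure $\mu$ of $f$ were supported on $\mc{R}_0^c$, then $\mathrm{supp}(\mu) \subset \overline{\mc{R}_0^c} \subset T_1M \setminus \mathrm{Int}(\mc{R}_0)$, which is a proper closed subset of $T_1M$ because $\mathrm{Int}(\mc{R}_0) \neq \emptyset$; this contradicts the full support just obtained. So $f$ has the weak pressure gap, and Theorem \ref{toe} produces a unique and ergodic equilibrium measure, which by Lemma \ref{supp} is fully supported.

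I do not expect any significant obstacle: the argument is a bookkeeping combination of Proposition \ref{horo}, Theorem \ref{v1}, Lemma \ref{supp}, and Theorem \ref{toe}. The one delicate conceptual point is recognising the two-way logical traffic between \emph{full support} and the \emph{weak pressure gap}: here, full support is established first (from density of $\mc{R}_0$, not from the pressure gap), and the non-empty interior is precisely what converts full support back into the weak pressure gap needed to invoke Theorem \ref{toe}.
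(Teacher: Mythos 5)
Your proof is correct and follows essentially the same route as the paper: density of $\mc{R}_0$ from topological mixing (Theorem \ref{v1}) together with $\Phi$-invariance, full support of every equilibrium measure via Lemma \ref{supp}, deduction of the weak pressure gap from this, and then Theorem \ref{toe} plus Lemma \ref{supp} for the conclusion. The only cosmetic difference is that the paper obtains the weak pressure gap by passing through ergodic components, while you argue directly that an equilibrium measure supported on $\mc{R}_0^c$ would have topological support inside the proper closed set $T_1M\setminus Int(\mc{R}_0)$, contradicting full support; both are valid.
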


\begin{proof}
We first show the density of $\mc{R}_0$. Let $W$ be an open set contained in $\mc{R}_0$. By Theorem \ref{v1}(2) there exists a dense orbit $\beta$ of the geodesic flow $\Phi$. Thus $\beta$ intersects $W\subset \mc{R}_0$ and hence $\beta\subset \mc{R}_0$ by the $\Phi$-invariance of $\mc{R}_0$. Since $\beta$ is dense so is $\mc{R}_0$. Now, let $\mu$ be an equilibrium measure of $f$ and $\mu_e$ be an ergodic component of $\mu$. Clearly, $\mu_e$ is also an equilibrium measure of $f$ and moreover $\mu_e$ has full support by Lemma \ref{supp}. In particular, $\mu_e(W)>0$ and hence $\mu_e(\mc{R}_0)=1$ by ergodicity of $\mu_e$. Since all ergodic components of $\mu$ are supported on $\mc{R}_0$ so does $\mu$. Thus $f$ has the weak pressure-gap and the result follows from Theorem \ref{toe} and Lemma \ref{supp}.
\end{proof}

\begin{cor}\label{gramo}
Let $M$ be a closed manifold without conjugate points. If the universal covering $\T{M}$ is Gromov hyperbolic, $M$ has continuous Green bundles and a hyperbolic closed geodesic then every $f\in \mc{H}$ has a unique equilibrium measure. This measure is ergodic and fully supported.    
\end{cor}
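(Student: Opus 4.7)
The plan is to derive the corollary as a quick consequence of Theorem \ref{int}, so the entire task reduces to checking its single hypothesis $Int(\mc{R}_0) \neq \emptyset$. First I would observe that Theorem \ref{gromo} converts Gromov hyperbolicity of $\T{M}$ plus continuity of Green bundles into the visibility of $\T{M}$, which is exactly the geometric setting assumed in Theorem \ref{int}. Thus all the machinery built in Sections 3--5 is available; nothing else in the corollary is needed beyond producing an open set of expansive points.

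To produce this open set I would start from the hyperbolic closed geodesic $c$ and use Green bundles as a bridge. Let $\theta_0 \in T_1M$ be tangent to $c$. Hyperbolicity of $c$ means that its linearized Poincaré map is hyperbolic, and in the no-conjugate-points setting this is well known to be equivalent to $G^s(\theta) \cap G^u(\theta) = \{0\}$ for every $\theta$ on the $\Phi$-orbit of $\theta_0$ (the two Green subspaces are precisely the stable and unstable invariant subspaces of the return map). By the hypothesis that Green bundles are continuous, the set
\[
\mc{U} = \bigl\{\theta \in T_1M : G^s(\theta) \cap G^u(\theta) = \{0\}\bigr\}
\]
is open and contains the orbit of $\theta_0$. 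The next claim is that $\mc{U} \subset \mc{R}_0$: the horospherical leaves $\mc{F}^s(\theta)$ and $\mc{F}^u(\theta)$ are Lipschitz $(n-1)$-submanifolds (Proposition \ref{horo}) whose tangent distributions are $G^s(\theta)$ and $G^u(\theta)$; transversality at $\theta$ forces the intersection $\mc{F}^s(\theta)\cap\mc{F}^u(\theta)$ to reduce to $\{\theta\}$ on some small Sasaki-ball around $\theta$. Combined with Proposition \ref{morse}, which says $\mc{I}(\theta)$ is compact, connected and of diameter at most $Q(M)$, this local isolation at $\theta$ propagates to global isolation: the connected set $\mc{I}(\theta)$ cannot leave a small ball around $\theta$ where the only intersection point is $\theta$ itself, so $\mc{I}(\theta) = \{\theta\}$. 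Hence $\mc{U}$ is a non-empty open subset of $\mc{R}_0$, so $Int(\mc{R}_0) \neq \emptyset$ and Theorem \ref{int} yields existence, uniqueness, ergodicity and full support of the equilibrium measure of every $f \in \mc{H}$.

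The main obstacle I expect is justifying rigorously the step \emph{transversality of Green bundles $\Rightarrow$ expansivity}. Green bundles are in general only measurable, so the continuity hypothesis on Green bundles is essential for $\mc{U}$ to be open; and horospherical leaves are only Lipschitz, so one cannot simply invoke the implicit function theorem to produce a local product neighborhood. The cleanest way to run the argument is to choose Lipschitz graph parametrizations of $\mc{F}^s(\theta)$ and $\mc{F}^u(\theta)$ over complementary subspaces at $\theta$ (using $G^s(\theta) \oplus G^u(\theta) = N(\theta)$), show that the two graphs meet only at $\theta$ on a small Sasaki-ball, and then close the argument by invoking Proposition \ref{morse}. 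Conceptually this is the higher-dimensional counterpart, in the visibility setting, of the surface arguments already used in the literature that the corollary builds on, and it should go through with standard care around the Lipschitz (rather than $C^1$) regularity of horospheres.
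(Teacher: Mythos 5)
Your proposal follows essentially the same route as the paper: Theorem \ref{gromo} converts Gromov hyperbolicity plus continuous Green bundles into visibility, the hyperbolic closed geodesic together with continuity of the Green bundles produces a nonempty open set on which $G^s$ and $G^u$ are transverse, this set is contained in $\mc{R}_0$, and Theorem \ref{int} finishes the argument; the paper does exactly this, simply citing a proposition for the fact that the transversality set (its $\mc{R}_1$) is open and contained in $\mc{R}_0$. The one place where you go beyond the paper is in trying to reprove that containment, and there your key step is not justified as stated: Proposition \ref{horo} only gives that the horospherical leaves are Lipschitz submanifolds, and it does \emph{not} say that their tangent distributions are the Green bundles. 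For general manifolds without conjugate points this tangency is a genuinely nontrivial fact; it is known under continuity of Green bundles together with quasi-convexity of the universal covering (which the Gromov hyperbolic/visibility hypothesis supplies), and that is precisely the content packaged in the result the paper cites. So either quote that result directly, as the paper does, or be aware that your transversality argument needs it as an input (together with uniform cone conditions to handle the merely Lipschitz regularity, as you anticipate); with that reference supplied, your argument and the paper's coincide.
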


\begin{proof}
By Theorem \ref{gromo}, continuity of Green bundles and Gromov hyperbolicity imply that $\T{M}$ is a visibility manifold. This together with our hypothesis show that $\mc{R}_1$ is an open set contained in $\mc{R}_0$ by Proposition x. Therefore, the result follows from Theorem \ref{int}. 
\end{proof}

\section{K-mixing Property}
Let us recall the definition of \( K \)-mixing property in the discrete setting. Let $(X,\mc{B})$ be a measurable space, $\mu$ be a measure on $(X,\mc{B})$ and $g:X\to X$ be an invertible map preserving $\mu$. The system \( (X, \mathcal{B}, g, \mu) \) is said to be \( K \)-mixing if for any sets \( A_0, A_1, \dots, A_r \in \mathcal{B} \) for \( r \geq 0 \) it holds
\[
\lim_{n \to \infty}\sup_{B\in \mc{C}^r_n} \left| \mu(A_0 \cap B) - \mu(A_0)\mu(B) \right| = 0,
\]
where \( \mc{C}^r_n \) is the minimal \( \sigma \)-algebra generated by \( g^k(A_i) \) for \( 1 \leq i \leq r \) and \( k \geq n \). We see that \( K \)-mixing property implies mixing of all orders. Thus, \( K \)-mixing is interpreted as a strong mixing property of $\mu$. A measure-preserving flow $(X,\mc{B},\Psi,\mu)$ is $K$-mixing if for every $t\neq 0$ the discrete system $(X,\mc{B},\psi_t,\mu)$ is $K$-mixing. In fact, by Gurevic's work \cite{gure} we only need to verify that a single time-$t$ map is $K$-mixing.

We will state a criterion to get the $K$-mixing property in the context of flows due to Call and Thompson \cite{call}. This criterion is based on a previous work of Ledrappier in the discrete setting \cite{led-criterion}. Let $\Psi(\psi_t:X\to X)$ be a continuous flow on a compact metric space and $f$ be a continuous potential on $X$. We consider the self-product flow $\Psi\times \Psi: X\times X\to X\times X$ and the sum potential $F:X\times X\to \mb{R}$ defined by $F(x,x')=f(x)+f(x')$. We define the set of non-expansive points of $X$ at scale $\epsilon>0$ by
\[  NE(\epsilon)= \{ x\in X: \Gamma_{\epsilon}(x)\not\in \psi_{[-s,s]}(x), \text{ for every } s>0\}, \]
where $\Gamma_{\epsilon}(x)=\{y\in X:d(\psi_t(x),\psi_t(y))<\epsilon, \text{ for every } t\in \mb{R}  \}$. A $\Psi$-invariant measure $\mu$ is called almost expansive if there exists $\epsilon>0$ such that $\mu(NE(\epsilon))=0$. Moreover, the set of product non-expansive points of $X\times X$ at scale $\epsilon>0$ is given by
\[  NE^{\times}(\epsilon)= \{ (x,y)\in X\times X: \Gamma_{\epsilon}(x,y)\not\in \psi_{[-s,s]}(x)\times \psi_{[-s,s]}(y), \text{ for every } s>0\}. \]
A $\Psi\times \Psi$-invariant measure $\nu$ is product expansive if there exists $\epsilon>0$ such that $\nu(NE^{\times}(\epsilon))=0$.
\begin{pro}[\cite{call}]\label{calo}
    Let $(X,\Psi)$ be a continuous entropy expansive flow on a compact metric space and $f:X \to [0, \infty)$ be a continuous potential. Assume that any equilibrium measure of $f$ is almost expansive and any equilibrium measure of $F$ is product expansive. Suppose that $X \times [0, \infty)$ has a $\lambda$-decomposition $(\mathcal{P},\mathcal{G},\mathcal{S})$ satisfying:
    \begin{enumerate}
        \item $\mathcal{G}$ has specification at all scales.
        \item $f$ has the Bowen property on $\mathcal{G}$.
        \item $P(\mathcal{P} \cup \mathcal{S}, f) < P(f)$, and the corresponding $\T{\lambda}$-decomposition $(\mathcal{\T{P}},\mathcal{\T{G}},\mathcal{\T{S}})$ for $(X \times X, \Psi \times \Psi)$ satisfies
        \item $P(\mathcal{\T{P}} \cup \mathcal{\T{S}}, F) < P(F) = 2P(f)$.
    \end{enumerate}
    Then $(X\times X,\Psi\times \Psi,F)$ has a unique equilibrium measure and thus the unique equilibrium measure for $(X,\Psi,f)$ is $K$-mixing.
\end{pro}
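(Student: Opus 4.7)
The plan is to follow the Ledrappier-style criterion for K-mixing, adapted to continuous flows. First I would invoke the Climenhaga--Thompson uniqueness theorem for equilibrium measures of flows admitting a $\lambda$-decomposition: conditions (1), (2), (3) in the hypothesis---specification of $\mc{G}$ at all scales, Bowen property of $f$ on $\mc{G}$, and the pressure gap $P(\mc{P}\cup\mc{S},f)<P(f)$---are precisely what their theorem demands. Combined with entropy expansivity of $\Psi$ and the almost expansivity of equilibrium measures of $f$, this yields a unique equilibrium measure $\mu$ for $(X,\Psi,f)$.

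Next I would apply the same uniqueness theorem to the product system $(X\times X,\Psi\times\Psi,F)$ using the induced $\T{\lambda}$-decomposition $(\T{\mc{P}},\T{\mc{G}},\T{\mc{S}})$. The required inputs---specification of $\T{\mc{G}}$ at all scales and the Bowen property of $F$ on $\T{\mc{G}}$---follow from the analogous properties for $\mc{G}$ and $f$; combined with the product-expansivity assumption on equilibrium measures of $F$ and with condition (4), $P(\T{\mc{P}}\cup\T{\mc{S}},F)<2P(f)=P(F)$, this produces a unique equilibrium measure $\nu$ for $F$.

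A short computation then identifies $\nu$ with $\mu\times\mu$. Entropy is additive under products, so $h_{\mu\times\mu}(\psi_1\times\psi_1)=2h_\mu(\psi_1)$, while $\int F\,d(\mu\times\mu)=2\int f\,d\mu$; since $P(F)=2P(f)$ is automatic, we get
\[
P_{\mu\times\mu}(F,\Psi\times\Psi)=2h_\mu(\psi_1)+2\int f\,d\mu=2P_\mu(f,\Psi)=2P(f)=P(F),
\]
so $\mu\times\mu$ is an equilibrium measure for $F$ and by uniqueness $\nu=\mu\times\mu$.

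Finally I would invoke the flow version of Ledrappier's criterion: uniqueness of $\mu\times\mu$ as equilibrium measure of the product system forces triviality of the Pinsker $\sigma$-algebra of $(\psi_1,\mu)$, i.e.\ the Kolmogorov property for the time-one map, and then Gurevi\v c's theorem upgrades K for $\psi_1$ to K-mixing of the whole flow. The main obstacle I anticipate is not the entropy bookkeeping but rather checking that the $\T{\lambda}$-decomposition genuinely inherits the relevant structural properties from $(\mc{P},\mc{G},\mc{S})$---obtaining specification at all scales for $\T{\mc{G}}$ from specification on $\mc{G}$, and controlling the ``mixed'' pieces such as $\mc{P}\times\mc{G}$ and $\mc{G}\times\mc{S}$ inside $\T{\mc{P}}\cup\T{\mc{S}}$ so that the product pressure gap (4) is really the right hypothesis rather than a list of cross-term pressure estimates.
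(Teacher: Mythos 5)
You are trying to prove a statement that the paper itself does not prove: Proposition \ref{calo} is quoted from \cite{call} and used as a black box, so the only meaningful comparison is with Call--Thompson's own argument. Your outline reproduces the skeleton of that argument: apply the Climenhaga--Thompson criterion to $(X,\Psi,f)$ and to the self-product $(X\times X,\Psi\times\Psi,F)$ to obtain unique equilibrium measures $\mu$ and $\nu$, identify $\nu=\mu\times\mu$ via additivity of entropy and $P(F)=2P(f)$, and then conclude the K-property. The entropy bookkeeping for $\nu=\mu\times\mu$ is correct, and the obstacles you flag at the end (specification for $\T{\mc{G}}$, the Bowen property of $F$, control of the cross terms $\mc{P}\times\mc{G}$ etc.) are indeed where the cited paper spends real technical effort (compare the product-specification lemma quoted in this paper as Lemma 3.1 of \cite{call}).

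The genuine gap is your last step. You invoke ``the flow version of Ledrappier's criterion'' as a known theorem, but Ledrappier's result is a discrete-time statement, and its flow version is precisely the nontrivial content of Proposition \ref{calo}: the implication ``$\mu\times\mu$ is the unique equilibrium measure of $F$ $\Rightarrow$ the Pinsker $\sigma$-algebra of $(\psi_1,\mu)$ is trivial'' is what must be proved, so citing it is circular. The missing argument is the joining construction: if the Pinsker $\sigma$-algebra $\mathcal{P}(\mu)$ of the time-one map were nontrivial, take the relatively independent self-joining $\lambda$ of $\mu$ over $\mathcal{P}(\mu)$; since $\mathcal{P}(\mu)$ is invariant under every $\psi_t$, the measure $\lambda$ is $(\Psi\times\Psi)$-invariant; since the Pinsker factor has zero entropy, $h_{\lambda}(\psi_1\times\psi_1)=2h_{\mu}(\psi_1)$; and $\int F\,d\lambda=2\int f\,d\mu$ because both marginals of $\lambda$ are $\mu$. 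Hence $\lambda$ is an equilibrium measure for $F$, and for any $A\in\mathcal{P}(\mu)$ with $0<\mu(A)<1$ one has $\lambda(A\times A)=\mu(A)\neq\mu(A)^2=(\mu\times\mu)(A\times A)$, so $\lambda\neq\mu\times\mu$, contradicting the uniqueness you established for $F$. Only after this does Gurevi\v{c}'s theorem upgrade the K-property of $\psi_1$ to K-mixing of the flow. With that argument supplied (together with the deferred verifications of specification and the Bowen property for the product decomposition), your proposal becomes a complete proof along the same lines as \cite{call}.
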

\begin{lem}\label{produ}
    Let $(X,\Psi)$ be a continuous expansive flow on a compact metric space. Then every $\Psi\times\Psi$-invariant measure $\mu$ on $X\times X$ is product expansive.
\end{lem}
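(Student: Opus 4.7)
The strategy is to show a strictly stronger statement: under the expansivity of $\Psi$, the set $NE^{\times}(\epsilon)$ is actually \emph{empty} for all sufficiently small $\epsilon>0$. The conclusion $\mu(NE^{\times}(\epsilon))=0$ for every $\Psi\times \Psi$-invariant measure $\mu$ then follows trivially. The key ingredient is that, for the natural sup-metric on $X\times X$,
\[\Gamma^{\times}_{\epsilon}(x,y) = \Gamma_{\epsilon}(x)\times \Gamma_{\epsilon}(y),\]
since a pair $(x',y')$ shadows $(x,y)$ within distance $\epsilon$ along $\Psi\times \Psi$ in the sup-metric if and only if $x'$ shadows $x$ and $y'$ shadows $y$ each within $\epsilon$ along $\Psi$. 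This reduces the two-dimensional problem to the one-dimensional one.

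First I would use the expansivity of $\Psi$ to fix an expansivity constant $\epsilon_0>0$, which in this paper's formulation means $NE(\epsilon_0)=\emptyset$; equivalently, for every point $z\in X$ there exists $s(z)>0$ with $\Gamma_{\epsilon_0}(z)\subset \psi_{[-s(z),s(z)]}(z)$. Given an arbitrary pair $(x,y)\in X\times X$, applying this pointwise at $x$ and at $y$ produces $s_x,s_y>0$ such that $\Gamma_{\epsilon_0}(x)\subset \psi_{[-s_x,s_x]}(x)$ and $\Gamma_{\epsilon_0}(y)\subset \psi_{[-s_y,s_y]}(y)$. Setting $s=\max\{s_x,s_y\}$ and combining with the product identity above,
\[\Gamma^{\times}_{\epsilon_0}(x,y)=\Gamma_{\epsilon_0}(x)\times \Gamma_{\epsilon_0}(y)\subset \psi_{[-s,s]}(x)\times \psi_{[-s,s]}(y).\]
Hence $(x,y)\notin NE^{\times}(\epsilon_0)$.

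Since $(x,y)$ was arbitrary, this shows $NE^{\times}(\epsilon_0)=\emptyset$, and in particular $\mu(NE^{\times}(\epsilon_0))=0$ for every $\Psi\times \Psi$-invariant $\mu$, so $\mu$ is product expansive. There is no real obstacle here: the argument is just a direct transfer of the one-factor expansivity to the two-factor setting via the product identity for dynamical balls. The only minor point to observe is that the choice of product metric on $X\times X$ is immaterial—any standard choice (sup, sum, Euclidean) is bi-Lipschitz equivalent, so one only needs to adjust $\epsilon_0$ by a fixed constant factor.
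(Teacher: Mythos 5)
Your proof is correct and takes essentially the same route as the paper: both arguments reduce product non-expansivity to one-factor non-expansivity and conclude $NE^{\times}(\epsilon)=\emptyset$ from $NE(\epsilon)=\emptyset$, which expansivity supplies. The only difference is that you verify the needed containment directly via the product identity $\Gamma_{\epsilon}(x,y)=\Gamma_{\epsilon}(x)\times\Gamma_{\epsilon}(y)$ (in a sup-type product metric), whereas the paper cites Lemma 4.4 of Call--Thompson for the decomposition $NE^{\times}(\epsilon)=(X\times NE(\epsilon))\cup(NE(\epsilon)\times X)$.
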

\begin{proof}
    By Lemma 4.4 of \cite{call} we obtain $NE^{\times}(\epsilon)=(X\times NE(\epsilon))\cup (NE(\epsilon)\times X)$ for every $\epsilon>0$. Since $\Psi$ is expansive, $NE(\epsilon)=\emptyset$ for some $\epsilon>0$. Therefore $NE^{\times}(\epsilon)=\emptyset$ and hence $\mu(NE^{\times}(\epsilon))=0$.  
\end{proof}
\begin{pro}
    Let $M$ be a closed manifold without conjugate points and with visibility universal covering. If $f\in \mc{H}$ has the weak pressure-gap then the unique equilibrium measure of $f$ is $K$-mixing.
\end{pro}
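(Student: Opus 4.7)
The plan is to lift $K$-mixing from the factor flow. First I would use Theorem \ref{toe} together with the weak pressure-gap hypothesis (which forces any equilibrium measure to sit on the expansive set $\mc{R}_0$) to identify the unique equilibrium measure $\mu$ of $f$; by Proposition \ref{proy} the push-forward $\nu=\chi_*\mu$ is the unique equilibrium measure on $X$ for the induced potential $\T{f}$. Because $\chi$ is injective on $\mc{R}_0$ (as $\mc{I}(\xi)=\{\xi\}$ there) and $\mu(\mc{R}_0)=1$, the semi-conjugacy $\chi$ implements a measure-theoretic isomorphism between $(T_1M,\mu,\Phi)$ and $(X,\nu,\Psi)$, so it suffices to prove that $\nu$ is $K$-mixing.

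To this end I would invoke the Call--Thompson criterion (Proposition \ref{calo}) on the factor flow $(X,\Psi)$ with the potential $\T{f}$, adding a constant if necessary to make it non-negative, a modification that alters neither the equilibrium measure nor the $K$-mixing property. The hypotheses reduce to almost trivial checks in our setting: by Proposition \ref{spe}, $\Psi$ is expansive (hence entropy expansive) and has the specification property, so $NE(\epsilon)=\emptyset$ for small $\epsilon>0$ and every $\Psi$-invariant measure is automatically almost expansive; Lemma \ref{produ} supplies product expansivity of every $\Psi\times\Psi$-invariant measure, in particular of any equilibrium measure of the sum potential $F(x,x')=\T{f}(x)+\T{f}(x')$. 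Taking the trivial $\lambda$-decomposition $(\mc{P},\mc{G},\mc{S})=(\emptyset,X\times[0,\infty),\emptyset)$ (and correspondingly on $X\times X$), the specification hypothesis on $\mc{G}$ is just Proposition \ref{spe}, the Bowen property of $\T{f}$ on $\mc{G}$ is exactly Proposition \ref{indu}, and the two pressure-gap conditions hold vacuously. Proposition \ref{calo} then yields that $\nu$ is $K$-mixing.

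The main subtlety I expect is the reduction step rather than the Call--Thompson verification: one must confirm that $\chi$ restricted to $\mc{R}_0$ is a genuine Borel isomorphism onto its image (so that $K$-mixing transfers back to $\mu$ without loss) and that no mass of $\mu$ is carried by $\mc{R}_0^c$. Both facts are implicit in the proof of Theorem \ref{toe} but should be extracted explicitly. Once the reduction is in hand, verifying the Call--Thompson hypotheses on $(X,\Psi)$ is routine because the factor flow has been engineered to be expansive with specification, and potentials in $\mc{H}$ descend to Bowen potentials by Proposition \ref{indu}.
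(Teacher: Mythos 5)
Your proposal is correct and matches the paper's argument essentially step for step: both verify the Call--Thompson criterion on the factor flow $(X,\Psi)$ with the trivial $\lambda$-decomposition, using Proposition \ref{spe} for (entropy) expansivity and specification, Lemma \ref{produ} for product expansivity, Proposition \ref{indu} for the Bowen property of $\T{f}$, and then transfer $K$-mixing back to $\mu$ via the measure-theoretic isomorphism given by $\chi$ restricted to $\mc{R}_0$ (using $\mu(\mc{R}_0^c)=0$ and Theorem \ref{toe}/Proposition \ref{proy}). Your remark about normalizing $\T{f}$ to be non-negative is a harmless refinement the paper leaves implicit.
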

\begin{proof}
 By Proposition \ref{spe}, the quotient flow $\Psi$ is expansive and thus entropy expansive. Moreover, every $\Psi$-invariant measure on $X$ is almost expansive. Lemma \ref{produ} implies that any $\Psi\times \Psi$-invariant measure on $X\times X$ is product expansive. Proposition \ref{spe} asserts that $\Psi$ has the specification property and therefore so does $\Psi\times\Psi$ by Lemma 3.1 of \cite{call}. For this reason we can take the $\lambda$-decomposition $(\mathcal{P},\mathcal{G},\mathcal{S})$ of $X\times [0,\infty]$ and $\T{\lambda}$-decomposition $(\mathcal{\T{P}},\mathcal{\T{G}},\mathcal{\T{S}})$ of $X\times X\times [0,\infty]$ to be trivial, i.e., $\mathcal{P}=\mathcal{S}=\mathcal{\T{P}}=\mathcal{\T{S}}=\emptyset$ with $\mc{G}$ and $\mc{\T{G}}$ collecting all possible orbit segments of $\Psi$ and $\Psi\times \Psi$. Since the induced potential $\T{f}$ on $X$ is Bowen by Proposition \ref{indu}, all hypothesis of Proposition \ref{calo} are satisfied. Therefore the sum potential $\T{F}(x,x')=\T{f}(x)+\T{f}(x')$ has a unique equilibrium measure and the unique equilibrium measure $\nu$ of $\T{f}$ is $K$-mixing. From Theorem \ref{toe}, let $\mu$ be the unique equilibrium measure of $f$, which satisfies $\chi_*\mu=\nu$. Hence $(T_1M,\mu)$ and $(X,\nu)$ are isomorphic because $\mu(\mc{R}^c_0)=\nu(\chi(\mc{R}^c_0))=0$ and $\chi$ is a bijection on $\mc{R}_0$. Therefore, since $\nu$ is $K$-mixing so is $\mu$.    
\end{proof}
Lemma \ref{produ} and part of the above proof also show the $K$-mixing property for the equilibrium measures considered in Bowen-Franco's Theorem \ref{franco}.
\begin{teo}
    Let $(X,\Psi)$ be a continuous expansive flow on a compact metric space and $f$ be a Bowen potential on $X$. If $\Psi$ has the specification property then $f$ has a unique equilibrium measure, which is $K$-mixing. 
\end{teo}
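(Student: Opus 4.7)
The plan is to derive this as a nearly immediate combination of Bowen-Franco's Theorem \ref{franco} with the Call-Thompson criterion in Proposition \ref{calo}, essentially abstracting the argument of the preceding proposition. Theorem \ref{franco} already provides the existence and uniqueness of an equilibrium measure $\mu$ for $f$, so the only new content is the $K$-mixing property of $\mu$. First I would replace $f$ by $f - \min f$ if necessary; this does not change the Bowen property, the equilibrium measure, or any of the pressure gaps, but it lets us apply Proposition \ref{calo} which requires $f:X\to[0,\infty)$.

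Next I would check the hypotheses of Proposition \ref{calo}. Since $\Psi$ is expansive, it is in particular entropy-expansive, and for sufficiently small $\epsilon>0$ we have $NE(\epsilon)=\emptyset$, so every $\Psi$-invariant measure on $X$ is almost expansive. Lemma \ref{produ} then gives that every $\Psi\times\Psi$-invariant measure on $X\times X$ is product expansive, so both expansivity assumptions hold for any equilibrium measures that arise. The specification property of $\Psi$ passes to $\Psi\times\Psi$ by Lemma 3.1 of \cite{call}, so specification is available on both factors.

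For the decomposition hypotheses, I would take the trivial $\lambda$-decomposition $(\mathcal{P},\mathcal{G},\mathcal{S})=(\emptyset, X\times[0,\infty),\emptyset)$ of orbit segments, and analogously the trivial $\T{\lambda}$-decomposition for $X\times X$. Then $\mathcal{G}$ trivially has specification at all scales, $f$ has the Bowen property on $\mathcal{G}$ directly by hypothesis, and the pressure gaps $P(\mathcal{P}\cup\mathcal{S},f)<P(f)$ and $P(\T{\mathcal{P}}\cup\T{\mathcal{S}},F)<P(F)=2P(f)$ are vacuous because $\mathcal{P}\cup\mathcal{S}=\T{\mathcal{P}}\cup\T{\mathcal{S}}=\emptyset$ (interpreting the pressure of the empty family as $-\infty$). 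Applying Proposition \ref{calo} therefore yields that $F(x,x')=f(x)+f(x')$ has a unique equilibrium measure on $X\times X$ and that the unique equilibrium measure of $f$ is $K$-mixing, exactly as claimed.

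The argument is essentially bookkeeping once Proposition \ref{calo} is in hand; the one mildly delicate point is confirming that the trivial decomposition genuinely satisfies the Call-Thompson framework (in particular that their convention makes the vacuous pressure conditions legitimate), but since this is precisely the way the $\lambda$-decomposition was used in the preceding proposition for $(X,\Psi,\T{f})$, no new obstacle arises here.
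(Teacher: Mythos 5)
Your proposal is correct and follows essentially the same route as the paper, which also obtains this theorem by running the Call--Thompson criterion (Proposition \ref{calo}) with the trivial decompositions, Lemma \ref{produ} for product expansivity, and Franco's theorem for uniqueness, exactly as in the preceding proposition. Your extra remark about normalizing $f$ to $f-\min f$ to meet the nonnegativity requirement of Proposition \ref{calo} is a sensible small addition that the paper leaves implicit.
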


\section{Weighted equidistribution of periodic orbits}

Let $\Psi=(\psi_t)$ be a continuous flow acting on a compact metric space $X$. For every $\Psi$-periodic orbit $\gamma$ in $X$ we denote its minimal period by $|\gamma|$. For every $T>0$ and every $\epsilon>0$ small enough we denote by $\mc{O}_{T,\epsilon}$ the set of $\Psi$-periodic orbits $\gamma$ with $|\gamma|\in[T-\epsilon,T+\epsilon]$. Given a continuous potential $f:X\to \mb{R}$ and a $\Psi$-periodic orbit $\gamma$ in $X$ we denote the integral of $f$ around $\gamma$ by 
\[  f(\gamma)=\frac{1}{|\gamma|}\int_0^{|\gamma|}f(\psi_t(x_{\gamma}))dt \quad \text{ for some }\quad x_{\gamma}\in \gamma.  \]
For a fixed small enough $\epsilon>0$ let us consider
\begin{equation}\label{lam}
    \limsup_{T\to \infty}\frac{1}{T}\log \Lambda(f,\Psi,T,\epsilon)  \quad \text{ with }\quad \Lambda(f,\Psi,T,\epsilon)=\sum_{\gamma\in\mc{O}_{T,\epsilon}}e^{f(\gamma)}.
\end{equation}
If this $\limsup$ does not depend upon $\epsilon$ then we denote it by $\overline{P}^*(f,\Psi)$ and call it the upper $f$-pressure of periodic orbits. Analogously we define the lower $f$-pressure of periodic orbits $\underline{P}^*(f,\Psi)$ using $\liminf$ instead of $\limsup$. Whenever these two pressures exist and agree, we denote the common value by $P^*(f,\Psi)$ and call it the $f$-pressure of periodic orbits.

We can use $f$ and periodic orbits to construct $\Psi$-invariant measures by the averaging. For every $T>0$ and $\epsilon>0$ small enough we define 
\[ \mu_{T,\epsilon}=\frac{1}{\Lambda(f,\Psi,T,\epsilon)}\sum_{\gamma\in \mc{O}_{T,\epsilon}}e^{f(\gamma)}\mu_{\gamma}, \]
where $\mu_{\gamma}$ is the unique $\Psi$-invariant measure on $X$ supported on $\gamma$. This measure is a well-defined $\Psi$-invariant probability measure on $X$ for example when $\Psi$ is expansive because $|\mc{O}_{T,\epsilon}|$ and $\Lambda(f,\Psi,T,\epsilon)$ are finite. Given a $\Psi$-invariant measure $\mu$ on $X$, periodic orbits weighted by $f$ equidistribute to $\mu$ if for every $\epsilon>0$ small enough, $\mu_{T,\epsilon}\to \mu$ in the weak$^*$ topology as $T\to \infty$.

For geodesic flows of manifolds of negative curvature, Bowen showed that periodic orbits (for the zero potential) equidistribute to the unique measure of maximal entropy \cite{bowenequi}. This was extended to hyperbolic flows by Parry, showing that periodic orbits weighted by a Holder continuous potential $f$, equidistribute to the unique equilibrium measure of $f$ \cite{parry06}. These cases deal with expansive flows and hence there exist finitely many periodic orbits with period in $[T-\epsilon,T+\epsilon]$ for $\epsilon>0$ small enough. For manifolds of non-positive curvature, there may exist closed flat strips of zero curvature, i.e., infinite periodic orbits with same period. 

For closed manifolds without conjugate points and with visibility universal covering, recall that strips $S(\eta)$ are built of equivalence classes $\mc{I}(\eta)$ (defined in Section \ref{c26}),
\[ S(\eta)=\bigcup_{t\in \mb{R}}\phi_t(\mc{I(\eta)}) \quad\text{ for some }\quad \eta \in T_1M. \]
The strip is periodic if there is some $T>0$ such that $\phi_T(\mc{I}(\eta))=\mc{I}(\eta)$. If $\mc{\mc{I}(\eta)}=\{\eta\}$ then $S(\eta)$ is reduced to the single orbit $\phi_{\mb{R}}(\eta)$ and a periodic strip is reduced to a single periodic orbit. In our setting, a periodic strip is not necessarily made up solely of periodic orbits. However, each periodic strip has at least one periodic orbit, and all of its periodic orbits have the same period. Thus, for each periodic strip $S(\eta)$, we choose any one of its periodic orbits and call it a prime periodic orbit of $S(\eta)$. For every $T>0$ and $\epsilon>0$ small enough we denote by $\mc{O}'_{T,\epsilon}$ the set of $\Phi$-periodic prime orbits $\gamma$ with $|\gamma|\in[T-\epsilon,T+\epsilon]$.

For potentials $f\in \mc{H}$ which are constant on classes $\mc{I}(\eta)$, using $\mc{O}'_{T,\epsilon}$ we define an analogue $f$-pressure to above. For every $T>0$ and $\epsilon>0$ small enough, the $f$-pressure of periodic prime orbits is defined by
\[  P'(f,\Phi)=\lim_{T\to \infty}\frac{1}{T}\log \Lambda'(f,\Phi,T,\epsilon) \quad \text{ with } \quad  \Lambda'(f,\Phi,T,\epsilon)=\sum_{\gamma\in\mc{O}'_{T,\epsilon}}e^{f(\gamma)}<\infty,   \]
where the limit exists and does not depend upon $\epsilon$. 
\begin{lem}
Let $M$ be a closed manifold without conjugate points and with visibility universal covering. If $f\in \mc{H}$ then $P'(f,\Phi)=P(f,\Phi)<\infty$.
\end{lem}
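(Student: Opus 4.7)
The plan is to transfer the identity from the factor flow $\Psi$ on $X$, where both sides are classically equal because $\Psi$ is expansive with specification (Proposition \ref{spe}) and the induced potential $\T{f}$ is Bowen (Proposition \ref{indu}). Finiteness is immediate: the geodesic flow $\Phi$ is $C^{\infty}$ hence has finite topological entropy, and $f$ is continuous on the compact space $T_1M$, so the variational principle gives $P(f,\Phi)\leq h(\Phi)+\|f\|_{\infty}<\infty$.

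First, I would establish a period-preserving bijection between $\Phi$-periodic strips in $T_1M$ and $\Psi$-periodic orbits in $X$. If $S(\eta)$ is a periodic strip with $\phi_T(\mc{I}(\eta))=\mc{I}(\eta)$, then Equation \eqref{for} gives $\psi_T([\eta])=[\eta]$, so $\chi(S(\eta))$ is a $\Psi$-periodic orbit of the same minimal period. Conversely, for a $\Psi$-periodic orbit $\overline{\gamma}\subset X$, the preimage $\chi^{-1}(\overline{\gamma})$ is $\Phi$-invariant, and its slice above each point $\overline{\gamma}(t)$ equals a single class $\mc{I}(\eta_t)$ (since fibers of $\chi$ are precisely the classes); the flow permutes these classes with the minimal period of $\overline{\gamma}$, so $\chi^{-1}(\overline{\gamma})$ is exactly one periodic strip. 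Choosing one prime periodic orbit per strip provides a bijection $\mc{O}'_{T,\epsilon}\leftrightarrow \mc{O}^{\Psi}_{T,\epsilon}$, where the latter is the set of $\Psi$-periodic orbits with minimal period in $[T-\epsilon,T+\epsilon]$. This bijection in particular shows that $\Lambda'(f,\Phi,T,\epsilon)$ is a finite sum.

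Since $f$ is constant on classes and $\T{f}\circ\chi=f$, the weights match under this bijection: for any paired $\gamma\leftrightarrow\overline{\gamma}$ one has $f(\phi_t(\gamma(0)))=\T{f}(\psi_t([\gamma(0)]))$ for every $t$, so $f(\gamma)=\T{f}(\overline{\gamma})$ and hence $\Lambda'(f,\Phi,T,\epsilon)=\Lambda(\T{f},\Psi,T,\epsilon)$. Bowen's classical theorem on the periodic-orbit pressure for expansive flows with the specification property (see \cite{bowenendo}) then asserts that the limit $\lim_{T\to\infty}\frac{1}{T}\log\Lambda(\T{f},\Psi,T,\epsilon)$ exists, is independent of small $\epsilon>0$, and equals $P(\T{f},\Psi)$. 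Combining with Equation \eqref{preso} yields
$$P'(f,\Phi)=P(\T{f},\Psi)=P(f,\Phi)<\infty.$$
The main technical obstacle is the step in the second paragraph, namely checking that $\chi^{-1}(\overline{\gamma})$ is a single periodic strip rather than a $\Phi$-invariant union of several; this is what makes the counting bijection truly period-preserving and is precisely where the identification of $\chi$-fibers with the classes $\mc{I}(\cdot)$ and the $\Phi$-to-$\Psi$ equivariance of $\chi$ from Section \ref{c26} are used.
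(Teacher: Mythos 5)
Your proposal is correct and follows essentially the same route as the paper: identify prime periodic orbits of $\Phi$ with $\Psi$-periodic orbits via $\chi$ (the paper does this through the set equality $\chi(\mc{O}'_{T,\epsilon})=\mc{O}_{T,\epsilon}$ and expansivity of $\Psi$, rather than by analyzing $\chi^{-1}(\overline{\gamma})$ directly), match the weights using $\T{f}\circ\chi=f$ to get $\Lambda'(f,\Phi,T,\epsilon)=\Lambda(\T{f},\Psi,T,\epsilon)$, and then invoke the periodic-orbit pressure result for expansive flows with specification (the paper cites Proposition 7.3.13 of \cite{fish19} where you cite Bowen's classical theorem) together with $P(\T{f},\Psi)=P(f,\Phi)$ from Equation \eqref{preso}.
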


\begin{proof}
Let $\tilde{f}$ be the induced potential on $X$ and $\tilde{\epsilon}>0$ the its Bowen constant given by Proposition \ref{indu}. For every $T>0$ and every $\epsilon<\tilde{\epsilon}$ small enough let $\mc{O}_{T,\epsilon}$ and $\mc{O}'_{T,\epsilon}$ be respectively the sets of $\Psi$-periodic orbits and $\Phi$-periodic prime orbits with period in $[T-\epsilon,T+\epsilon]$ as explained above. Since the semi-conjugacy $\chi$ between $\Phi$ and $\Psi$ is time-preserving it follows that $\chi(\mc{O}'_{T,\epsilon})\subset \mc{O}_{T,\epsilon}$. The reverse inclusion follows from the surjectivity of $\chi$ and the above definition of $\Phi$-periodic prime orbits. We thus obtain $\chi(\mc{O}'_{T,\epsilon})=\mc{O}_{T,\epsilon}$ and $|\mc{O}'_{T,\epsilon}|=|\mc{O}_{T,\epsilon}|<\infty$ by expansivity of $\Psi$ given in Proposition \ref{spe}. On the other hand, from the semi-conjugacy Equation \eqref{for} and Proposition \ref{indu}, for every $\gamma\in \mc{O}'_{T,\epsilon}$ we have 
\begin{equation}\label{wei}
    f(\gamma)=\int_0^{|\gamma|}f(\phi_t(\eta_{\gamma}))dt= \int_0^{|\chi(\gamma)|}\tilde{f}(\psi_t(x_{\chi(\gamma)}))dt= \tilde{f}(\chi(\gamma)),
\end{equation}
for some $\eta_{\gamma}\in \gamma$ and $x_{\chi(\gamma)}\in \chi(\gamma)$ where $|\chi(\gamma)|=|\gamma|$ and $\chi(\gamma)\in\mc{O}_{T,\epsilon}$. Using all together gives
\begin{equation}\label{expa}
    \Lambda'(f,\Phi,T,\epsilon)=\sum_{\gamma\in \mc{O}'_{T,\epsilon}}e^{f(\gamma)}=\sum_{\chi(\gamma)\in \chi(\mc{O}'_{T,\epsilon})}e^{\tilde{f}(\chi(\gamma))}=\sum_{\T{\gamma}\in \mc{O}_{T,\epsilon}}e^{\tilde{f}(\T{\gamma})}= \Lambda(\tilde{f},\Psi,T,\epsilon)<\infty.
\end{equation}
From this and Proposition 7.3.13 of \cite{fish19}, for every $\epsilon>0$ small enough it holds
\[ \lim_{T\to \infty} \frac{1}{T}\log \Lambda'(f,\Phi,T,\epsilon)=\lim_{T\to \infty} \frac{1}{T}\log \Lambda(\tilde{f},\Psi,T,\epsilon)=P(\tilde{f},\Psi),\]
where $P(\tilde{f},\Psi)$ is a topological pressure. Since these limits exist and does not depend upon $\epsilon>0$ small enough we deduce that $P'(f,\Phi)$ and $P^*(\tilde{f},\Psi)$ exist as well. From this equation and Equation \eqref{preso} we conclude that
\[ P'(f,\Phi)=P^*(\tilde{f},\Psi)=P(\tilde{f},\Psi)=P(f,\Phi). \]
\end{proof}
Once we know the relation between the asymptotic exponential decrease rates we can show the weighted equidistribution of periodic orbits. 
\begin{pro}
Let $M$ be a closed manifold without conjugate points and with visibility universal covering, $f\in \mc{H}$ be a potential having the weak pressure gap and $\mu$ be the unique equilibrium measure of $f$. Then, $\Phi$-periodic prime orbits weighted by $f$ equidistribute to $\mu$.  
\end{pro}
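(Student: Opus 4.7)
The plan is to transfer the equidistribution from the factor flow $\Psi$ to the geodesic flow $\Phi$ through the quotient map $\chi$. Let $\nu=\chi_*\mu$, which by Proposition \ref{proy} and Corollary \ref{frank} is the unique equilibrium measure of the induced Bowen potential $\T{f}$ (the Bowen property being given by Proposition \ref{indu}). Since the factor flow $\Psi$ is expansive and has the specification property (Proposition \ref{spe}), a classical Bowen--Parry style equidistribution theorem (see for instance Proposition 7.3.13 of \cite{fish19}) yields, for every $\epsilon>0$ small enough,
\[ \mu^{\T{f}}_{T,\epsilon}:=\frac{1}{\Lambda(\T{f},\Psi,T,\epsilon)}\sum_{\T{\gamma}\in\mc{O}_{T,\epsilon}}e^{\T{f}(\T{\gamma})}\mu_{\T{\gamma}}\xrightarrow[T\to\infty]{w^*}\nu. \]

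Next, I would identify the push-forwards $\chi_*\mu^f_{T,\epsilon}$ with the measures $\mu^{\T{f}}_{T,\epsilon}$. Since $\chi$ is a time-preserving semi-conjugacy restricting to a period-preserving surjection between each $\Phi$-periodic prime orbit $\gamma$ and the $\Psi$-periodic orbit $\chi(\gamma)$, we have $\chi_*\mu_\gamma=\mu_{\chi(\gamma)}$ by uniqueness of the invariant probability on each closed orbit. Combining this with the identities $f(\gamma)=\T{f}(\chi(\gamma))$ and $\Lambda'(f,\Phi,T,\epsilon)=\Lambda(\T{f},\Psi,T,\epsilon)$ from the preceding lemma (Equations \eqref{wei} and \eqref{expa}) gives $\chi_*\mu^f_{T,\epsilon}=\mu^{\T{f}}_{T,\epsilon}$, so the push-forwards $\chi_*\mu^f_{T,\epsilon}$ already converge weak$^*$ to $\nu=\chi_*\mu$.

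The main obstacle is descending this convergence from $X$ back to $T_1M$, since the semi-conjugacy $\chi$ is in general not injective over non-expansive points. My plan here is to argue by compactness: let $\sigma$ be any weak$^*$ accumulation point of $(\mu^f_{T,\epsilon})_{T>0}$, which exists by compactness of $\mc{M}(T_1M)$. Each $\mu^f_{T,\epsilon}$ is a convex combination of the $\Phi$-invariant measures $\mu_\gamma$, hence $\sigma$ is $\Phi$-invariant, and continuity of $\chi_*$ forces $\chi_*\sigma=\nu=\chi_*\mu$. Combining Proposition \ref{sur}(1), which gives $h_\sigma(\phi_1)=h_{\chi_*\sigma}(\psi_1)$, with the disintegration identity \eqref{des} applied to the class-constant potential $f$, and then with Equation \eqref{preso}, I obtain
\[ P_\sigma(f,\Phi)=h_{\chi_*\sigma}(\psi_1)+\int_X\T{f}\,d(\chi_*\sigma)=P_\nu(\T{f},\Psi)=P(\T{f},\Psi)=P(f,\Phi). \]
Hence $\sigma$ is an equilibrium measure of $f$, and Theorem \ref{toe} forces $\sigma=\mu$. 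Since every weak$^*$ accumulation point of $(\mu^f_{T,\epsilon})$ equals $\mu$, the full family converges: $\mu^f_{T,\epsilon}\to\mu$ as $T\to\infty$.
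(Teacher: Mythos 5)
Your argument is correct, and up to the last step it is the same as the paper's: identify $\chi_*\mu^f_{T,\epsilon}$ with the $\T{f}$-weighted periodic-orbit measures on $X$ via Equations \eqref{wei} and \eqref{expa}, invoke the classical equidistribution for the expansive specification flow $\Psi$ to get convergence to $\nu=\chi_*\mu$, and then pass to an accumulation point $\sigma$ of $(\mu^f_{T,\epsilon})$ with $\chi_*\sigma=\nu$. Where you diverge is in how you force $\sigma=\mu$. The paper argues at the level of measures: since $\mc{R}_0$ is saturated and $\mu(\mc{R}_0)=1$ (from the weak pressure gap), the identity $\chi_*\sigma=\chi_*\mu$ gives $\sigma(\mc{R}_0)=1$ as well, and injectivity of $\chi$ on $\mc{R}_0$ then pins down $\sigma=\mu$ directly. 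You instead argue at the level of pressures: $h_\sigma(\phi_1)=h_{\chi_*\sigma}(\psi_1)$ (Proposition \ref{sur}) and $\int f\,d\sigma=\int\T{f}\,d(\chi_*\sigma)$ (since $f=\T{f}\circ\chi$), so $P_\sigma(f,\Phi)=P_\nu(\T{f},\Psi)=P(\T{f},\Psi)=P(f,\Phi)$ by \eqref{preso}, making $\sigma$ an equilibrium measure of $f$, whence $\sigma=\mu$ by the uniqueness in Theorem \ref{toe}. Both closings are legitimate and both ultimately consume the weak pressure gap (yours through Theorem \ref{toe}, the paper's through the fact that $\mu$ is carried by $\mc{R}_0$); yours is slightly more economical in that it recycles already-proved statements rather than redoing the saturation/injectivity bookkeeping, while the paper's is more self-contained and makes explicit where the limit measure lives. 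One small point worth making explicit in your write-up: the identification $\chi_*\mu^f_{T,\epsilon}=\mu^{\T{f}}_{T,\epsilon}$ uses that $\gamma\mapsto\chi(\gamma)$ is a bijection from $\mc{O}'_{T,\epsilon}$ onto $\mc{O}_{T,\epsilon}$ (one prime orbit per periodic strip), which is exactly what the preceding lemma's equality $|\mc{O}'_{T,\epsilon}|=|\mc{O}_{T,\epsilon}|<\infty$ records; you implicitly rely on it, so cite it.
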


\begin{proof}
As above, let $\tilde{f}$ be the induced potential on $X$ and $\tilde{\epsilon}>0$ be its Bowen constant given by Proposition \ref{indu}. For every $T>0$ and every $\epsilon<\tilde{\epsilon}$ small enough define the measure
\[ \mu_{T,\epsilon}=\frac{1}{\Lambda'(f,\Phi,T,\epsilon)}\sum_{\gamma\in \mc{O}'_{T,\epsilon}}e^{f(\gamma)}\mu_{\gamma}, \]
where $\Lambda'(f,\Phi,T,\epsilon)<\infty$ by Equation \eqref{expa} and $\mu_{\gamma}$ is the unique $\Phi$-invariant measure supported on $\gamma$. For every $T,\epsilon>0$ we clearly see that $\mu_{T,\epsilon}$ is a $\Phi$-invariant probability measure on $T_1M$.

We will show that $\mu_{T,\epsilon}$ converges to $\mu$. Let $\tau$ be an accumulation point of $(\mu_{T,\epsilon})$ in the weak$^*$ topology. Thus, there exists a sequence $T_n\to \infty$ such that $\mu_{T_n,\epsilon}\to \tau$ weakly. Since $\chi_*$ is weak$^*$ continuous it follows that $\chi_*\mu_{T_n,\epsilon}\to\chi_*\tau$ weakly. Using Equations \eqref{wei} and \eqref{expa} we obtain
\begin{equation}\label{ecis}
    \chi_*\mu_{T_n,\epsilon}=\frac{1}{\Lambda'(f,\Phi,T,\epsilon)}\sum_{\gamma\in \mc{O}'_{T_n,\epsilon}}e^{f(\gamma)}\chi_*\mu_{\gamma}=\frac{1}{\Lambda(\tilde{f},\Psi,T,\epsilon)}\sum_{\tilde{\gamma}\in \mc{O}_{T_n,\epsilon}}e^{\tilde{f}(\tilde{\gamma})}\nu_{\tilde{\gamma}}, 
\end{equation}
where $\nu_{\tilde{\gamma}}=\chi_*\mu_{\gamma}$ is the unique $\Psi$-invariant measure supported on $\tilde{\gamma}=\chi(\gamma)$. 

By Proposition 7.3.15 of \cite{fish19}, we know that $\Psi$-periodic orbits weighted by $\tilde{f}$ equidistribute to the unique equilibrium measure $\nu$ of $\tilde{f}$. Using Equation \eqref{ecis}, this equidistribution means   
\[ \chi_*\mu_{T_n,\epsilon}=\frac{1}{\Lambda(\tilde{f},\Psi,T_n,\epsilon)}\sum_{\tilde{\gamma}\in \mc{O}_{T_n,\epsilon}}e^{\tilde{f}(\tilde{\gamma})}\nu_{\tilde{\gamma}}\longrightarrow \nu \quad \text{ as } \quad n\to \infty \]
in the weak$^*$ topology and hence $\chi_*\tau=\nu=\chi_*\mu$ by Proposition \ref{proy}. By the weak pressure gap, we know that $\mu$ is supported on $\mc{R}_0$ and therefore so is $\tau$:
\[ 1=\mu(\mc{R}_0)=\mu(\chi^{-1}\circ\chi(\mc{R}_0))=\chi_*\mu(\chi(\mc{R}_0))=\chi_*\tau(\chi(\mc{R}_0))=\tau(\chi^{-1}\circ\chi(\mc{R}_0))=\tau(\mc{R}_0). \]
Since $\chi$ is a bijection when restricted to $\mc{R}_0$, $\chi_*\tau=\chi_*\mu$ implies that $\tau=\mu$. As $\tau$ was an arbitrary accumulation point of $(\mu_{T,\epsilon})$ we conclude that $\mu_{T,\epsilon}\to \mu$ weakly, i.e., $\Phi$-periodic prime orbits weighted by $f$ equidistribute to $\mu$.
\end{proof}

\section{Gibbs property}

Let $\Psi=(\psi_t)$ be a continuous flow acting on a compact metric space $X$, $f:X\to\mb{R}$ be a continuous potential and $\nu$ be a $\Psi$-invariant probability measure on $X$. We say that $\nu$ has the Gibbs property for $f$ with constant $P\in\mb{R}$ if for every $\epsilon>0$ there exists $C_{\epsilon}>1$ such that for all $x\in X$ and all $t>0$ it holds
\begin{equation}\label{proper}
    C_{\epsilon}^{-1}\leq \frac{\nu(B(x,\epsilon,t))}{e^{S_tf(x)-tP}}\leq C_{\epsilon},
\end{equation}
where $B(x,\epsilon,t)$ is the $(\epsilon,t)$-dynamical ball with respect to $\Psi$ and $S_tf(x)=\int_0^tg(\psi_s(x))ds$. If $\nu$ is an equilibrium measure of $f$, $\nu$ has the Gibbs property for $f$ with the topological pressure $P(f,\Psi)$ as constant.

Gibbs property was proved for equilibrium measures of Holder continuous potentials in the setting of hyperbolic transitive maps by Bowen \cite{bowenbook}. The extension of this result to continuous expansive flows having the specification property is a consequence of Franco's work \cite{fran77}. We used this result in Lemma \ref{supp} to prove full support of equilibrium measures. In our setting, we show that equilibrium measures of potentials in Theorem \ref{toe}, satisfy the Gibbs property except at non-expansive points.
\begin{lem}\label{ino}
Let $M$ be a closed manifold without conjugate points and with visibility universal covering, $\chi:T_1M\to X$ be the semi-conjugacy between the geodesic and quotient flow and $f\in \mc{H}$. Given $\theta\in T_1M$, $\epsilon>0$ and $t>0$, there exists $\tilde{\epsilon}_1>0$ such that
\[  \chi(B(\theta,\epsilon,t))\subset B(\chi(\theta),\tilde{\epsilon}_1,t). \]
If furthermore $\theta\in \mathcal{R}_0$ then there exist an open saturated set $D(\theta,\epsilon,t)$ and $\tilde{\epsilon}_2>0$ such that
\begin{equation}\label{double}
    B(\chi(\theta),\tilde{\epsilon}_2,t)\subset \chi(D(\theta,\epsilon,t)) \subset \chi(B(\theta,\epsilon,t)),
\end{equation}
where $B(\theta,\epsilon,t)$ and $B(\chi(\theta),\tilde{\epsilon},t)$ are the dynamical balls in $T_1M$ and $X$ respectively.
\end{lem}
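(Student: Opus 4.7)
The plan is to prove the two inclusions separately. For the first inclusion I would exploit that $\chi:T_1M\to X$ is continuous on the compact space $T_1M$ and hence uniformly continuous, so there is a modulus of continuity $\omega$ with $d(\chi(\xi),\chi(\xi'))\leq\omega(d_s(\xi,\xi'))$ and $\omega(\delta)\to 0$ as $\delta\to 0$. If $\theta'\in B(\theta,\epsilon,t)$ then $d_s(\phi_s(\theta),\phi_s(\theta'))<\epsilon$ for every $s\in[0,t]$, so by the semi-conjugacy identity $\chi\circ\phi_s=\psi_s\circ\chi$ of Equation \eqref{for} one has $d(\psi_s(\chi(\theta)),\psi_s(\chi(\theta')))\leq\omega(\epsilon)$ for every such $s$; choosing $\tilde{\epsilon}_1$ slightly larger than $\omega(\epsilon)$ produces $\chi(\theta')\in B(\chi(\theta),\tilde{\epsilon}_1,t)$, which is the desired first inclusion.

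For the second part I would take $D(\theta,\epsilon,t):=\chi^{-1}(B(\chi(\theta),\tilde{\epsilon}_2,t))$ for a value of $\tilde{\epsilon}_2>0$ to be fixed below. This set is automatically open, since $\chi$ is continuous and the quotient dynamical ball is open in $X$, and automatically saturated, since it is a preimage under $\chi$. Moreover $\chi(D)=B(\chi(\theta),\tilde{\epsilon}_2,t)$ by surjectivity of $\chi$, so the first inclusion in \eqref{double} becomes an equality and the whole statement reduces to the key claim $B(\chi(\theta),\tilde{\epsilon}_2,t)\subset\chi(B(\theta,\epsilon,t))$ for suitable $\tilde{\epsilon}_2>0$.

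To establish this key claim I would exploit the hypothesis $\theta\in\mc{R}_0$ together with the $\Phi$-invariance of $\mc{R}_0$, which is immediate from $\phi_s(\mc{I}(\theta))=\mc{I}(\phi_s(\theta))$ in Proposition \ref{horo}: the entire arc $\phi_{[0,t]}(\theta)$ lies in $\mc{R}_0$, so $\mc{I}(\phi_s(\theta))=\{\phi_s(\theta)\}$ is a singleton for every $s\in[0,t]$. Proposition \ref{que1} then furnishes, for each such $s$, an open set $A_s\in\mc{A}(\phi_s(\theta))$ with $A_s\subset B(\phi_s(\theta),\epsilon/4)$, and $\chi(A_s)$ is an open neighborhood of $[\phi_s(\theta)]$ in $X$. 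By continuity of $s\mapsto\phi_s(\theta)$ and compactness of $[0,t]$ I would extract a finite partition $0=s_0<s_1<\cdots<s_n=t$ together with radii $\delta_i>0$ such that $B([\phi_{s_i}(\theta)],\delta_i)\subset\chi(A_{s_i})$ and, for every $s\in[s_{i-1},s_i]$, both $d_s(\phi_s(\theta),\phi_{s_i}(\theta))<\epsilon/4$ and $[\phi_s(\theta)]\in B([\phi_{s_i}(\theta)],\delta_i/2)$ hold. Setting $\tilde{\epsilon}_2:=\min_i\delta_i/2$, for any $[\xi]\in B(\chi(\theta),\tilde{\epsilon}_2,t)$ and any $s\in[s_{i-1},s_i]$ one has $[\phi_s(\xi)]\in B([\phi_{s_i}(\theta)],\delta_i)\subset\chi(A_{s_i})$, hence $\phi_s(\xi)\in A_{s_i}\subset B(\phi_{s_i}(\theta),\epsilon/4)$, and the triangle inequality yields $d_s(\phi_s(\theta),\phi_s(\xi))<\epsilon/2<\epsilon$; thus $\xi\in B(\theta,\epsilon,t)$ and $[\xi]\in\chi(B(\theta,\epsilon,t))$ as required.

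The main obstacle I anticipate is producing a single $\tilde{\epsilon}_2$ uniform over $s\in[0,t]$. A naive attempt to apply Proposition \ref{lev} pointwise in $s$ does not suffice: the intermediate point extracted there by the Lebesgue-number argument is not forced to lie in $\mc{R}_0$, and the triangle inequality across its nontrivial class contributes the global diameter $Q(M)$ from Proposition \ref{morse}, yielding the useless bound $\epsilon+Q(M)$ instead of $\epsilon$. The compactness/partition argument above sidesteps this by centering the basis elements $A_{s_i}$ directly on the expansive points $\phi_{s_i}(\theta)$, whose classes collapse to singletons, so only the freely chosen scale $\epsilon/4$ of each $A_{s_i}$ enters the final estimate.
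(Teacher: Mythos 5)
Your first inclusion is exactly the paper's argument (uniform continuity of $\chi$ plus the semi-conjugacy), but for the second half you take a genuinely different route. The paper applies Lemma \ref{sat} to the open set $B(\theta,\epsilon,t)$ to get an open saturated set $D(\theta,\epsilon,t)\subset B(\theta,\epsilon,t)$, observes that $\chi(D(\theta,\epsilon,t))$ is then an open neighborhood of $[\theta]$ in $X$, and concludes by noting that the quotient dynamical balls, being the balls of the metric $d_t(x,y)=\max_{s\in[0,t]}d(\psi_s(x),\psi_s(y))$, form a neighborhood basis of $[\theta]$, so some $B([\theta],\tilde{\epsilon}_2,t)$ fits inside $\chi(D(\theta,\epsilon,t))$. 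You instead set $D:=\chi^{-1}(B([\theta],\tilde{\epsilon}_2,t))$ (open, saturated, with $\chi(D)$ equal to the quotient ball) and prove the key inclusion $B([\theta],\tilde{\epsilon}_2,t)\subset\chi(B(\theta,\epsilon,t))$ by hand, covering the orbit segment $\phi_{[0,t]}(\theta)\subset\mc{R}_0$ by finitely many basis neighborhoods $A_{s_i}\in\mc{A}(\phi_{s_i}(\theta))$ from Proposition \ref{que1}. This is a valid alternative: it trades the paper's soft topological step for an explicit compactness estimate, and your closing remark about why a naive pointwise use of Proposition \ref{lev} only yields the useless bound $\epsilon+Q(M)$ is correct and is precisely why the paper does not argue that way.

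Three caveats you should address. First, the step ``$[\phi_s(\xi)]\in\chi(A_{s_i})$ hence $\phi_s(\xi)\in A_{s_i}$'' requires the basis sets of Proposition \ref{que1} to be saturated; otherwise you only learn that some point of the class $\mc{I}(\phi_s(\xi))$, whose diameter can be as large as $Q(M)$, lies in $A_{s_i}$, and your triangle inequality breaks. Saturation does hold for the construction in \cite{mam24} and is used implicitly by the paper in the proof of Proposition \ref{lev}, but it is not part of the statement you quote, so say it. Second, your partition extraction is circular as phrased: the required fineness of $[s_{i-1},s_i]$ depends on $\delta_i$, which depends on the endpoint $s_i$ you are still choosing; the standard repair is to take a finite subcover by intervals $J_{u_j}$ attached to comparison times $u_j$ and require each partition interval to lie in some $J_{u_j}$, comparing to $\phi_{u_j}(\theta)$ rather than to an endpoint. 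Third, unlike the paper's $D$, yours is not contained in $B(\theta,\epsilon,t)$, and the later Gibbs-property proof explicitly invokes $D(\theta,\epsilon,t)\subset B(\theta,\epsilon,t)$ to pass from $\mu(D)$ to $\mu(B(\theta,\epsilon,t))$; one can still salvage that application from your version because $\mu(\mc{R}_0)=1$ gives $\mu(\chi^{-1}\circ\chi(B(\theta,\epsilon,t)))=\mu(B(\theta,\epsilon,t))$, but the paper's route via Lemma \ref{sat} delivers the containment for free.
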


\begin{proof}
Recall the notation of Section \ref{c26}: for every $\theta\in T_1M$, $[\theta]=\chi(\theta)$. For the first inclusion, let $\eta\in B(\theta,\epsilon,t)$ hence $d_s(\phi_s(\theta),\phi_s(\eta))<\epsilon$ for all $s\in[0,t]$. By uniform continuity of $\chi$ there exists $\tilde{\epsilon}_1>0$ such that for every $s\in[0,t]$ we have
\[   d(\psi_s[\theta],\psi_s[\eta])<\tilde{\epsilon}_1 \quad \text{ for every }\quad s\in [0,t],   \]
which means that $[\eta]\in B([\theta],\tilde{\epsilon}_1,t)$ and hence $\chi(B(\theta,\epsilon,t))\subset B([\theta],\tilde{\epsilon}_1,t)$.

For the second inclusion, since $B(\theta,\epsilon,t)$ is an open set in $T_1M$, by Lemma \ref{sat} there exists an open saturated set $D(\theta,\epsilon,t)\subset B(\theta,\epsilon,t)$ and hence 
\begin{equation}\label{inclu}
\chi(D(\theta,\epsilon,t))\subset \chi(B(\theta,\epsilon,t)).    
\end{equation}
Since $D(\theta,\epsilon,t)$ is open and saturated it follows that $\chi(D(\theta,\epsilon,t))$ is an open set in $X$. Note that for all $t>0$, $d_t(x,y)=\max_{s\in[0,t]}d(\psi_s(x),\psi_s(y))$ is also a metric on $X$ and its open balls $B_t(x,\epsilon)$ are exactly the dynamical balls $B(x,\epsilon,t)$. We know that the family of dynamical balls $\{B_t(x,1/n)\}_{n\in \mathbb{N}}$ form a basis of neighborhoods for every $x\in X$. Thus, there exists $\tilde{\epsilon}_2>0$ such that $B_t([\theta],\tilde{\epsilon}_2)=B([\theta],\tilde{\epsilon}_2,t)\subset \chi(D(\theta,\epsilon,t))$ because $\chi(D(\theta,\epsilon,t))$ is an open neighborhood of $[\theta]$. Finally, using Equation \eqref{inclu} we obtain $B([\theta],\tilde{\epsilon}_2,t)\subset \chi(B(\theta,\epsilon,t))$.       
\end{proof}

\begin{pro}
Let $M$ be a closed manifold without conjugate points and with visibility universal covering, $f\in \mc{H}$ be a potential having the weak pressure gap and $\mu$ be the unique equilibrium measure of $f$. Then, for every $\epsilon>0$ there exists $C_{\epsilon}>1$ such that for all $\theta\in T_1M$ and all $t>0$ it holds
\[ \mu(B(\theta,\epsilon,t))\leq C_{\epsilon}e^{S_tf(\theta)-tP(f,\Phi)}.  \]
If furthermore $\theta\in \mathcal{R}_0$ then  
\[ \mu(B(\theta,\epsilon,t))\geq C_{\epsilon}^{-1}e^{S_tf(\theta)-tP(f,\Phi)},  \]
where $P(f,\Phi)$ is the topological pressure of $f$ and $S_tf(\theta)=\int_0^tf(\phi_s(\theta))ds$. In particular, $\mu$ has the Gibbs property except at non-expansive points.
\end{pro}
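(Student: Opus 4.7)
The plan is to transfer the Gibbs property enjoyed by the pushforward measure $\nu := \chi_*\mu$ on the factor system $(X,\Psi)$ back to $\mu$ on $(T_1M,\Phi)$, using Lemma \ref{ino} to translate between dynamical balls through the semi-conjugacy $\chi$.

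First I would fix the setup. By Theorem \ref{toe} together with the weak pressure gap argument in its proof, $\mu$ is supported on $\mc{R}_0$ and, by Proposition \ref{proy}, $\nu=\chi_*\mu$ is an equilibrium measure for the induced Bowen potential $\tilde f$ (Proposition \ref{indu}) on the quotient flow $\Psi$. By Proposition \ref{spe}, $\Psi$ is expansive with the specification property, so Lemma \ref{gibs} yields the Gibbs property of $\nu$ with constant $P(\tilde f,\Psi)=P(f,\Phi)$ (Equation \eqref{preso}); that is, for every $\delta>0$ there exists $K_\delta>1$ such that
\[ K_\delta^{-1} \leq \frac{\nu\bigl(B([\theta],\delta,t)\bigr)}{\exp\bigl(S_t f(\theta)-tP(f,\Phi)\bigr)} \leq K_\delta, \]
where I used $\tilde f\circ\chi=f$ to identify $S_t\tilde f([\theta])=S_tf(\theta)$.

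For the upper bound, fix $\theta\in T_1M$ and apply the first part of Lemma \ref{ino} to get $\tilde\epsilon_1>0$ with $\chi(B(\theta,\epsilon,t))\subset B([\theta],\tilde\epsilon_1,t)$. From the inclusion $B(\theta,\epsilon,t)\subset \chi^{-1}\chi(B(\theta,\epsilon,t))$ and $\chi_*\mu=\nu$, I obtain
\[ \mu(B(\theta,\epsilon,t)) \leq \nu\bigl(\chi(B(\theta,\epsilon,t))\bigr) \leq \nu\bigl(B([\theta],\tilde\epsilon_1,t)\bigr) \leq K_{\tilde\epsilon_1}\, e^{S_tf(\theta)-tP(f,\Phi)}. \]
For the lower bound with $\theta\in\mc{R}_0$, the second part of Lemma \ref{ino} supplies a saturated open set $D(\theta,\epsilon,t)\subset B(\theta,\epsilon,t)$ and $\tilde\epsilon_2>0$ with $B([\theta],\tilde\epsilon_2,t)\subset\chi(D(\theta,\epsilon,t))$. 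Saturation gives $\chi^{-1}(\chi(D))=D$, hence $\mu(D)=\chi_*\mu(\chi(D))=\nu(\chi(D))$, so
\[ \mu(B(\theta,\epsilon,t))\geq \mu(D)=\nu(\chi(D))\geq \nu\bigl(B([\theta],\tilde\epsilon_2,t)\bigr)\geq K_{\tilde\epsilon_2}^{-1}\,e^{S_tf(\theta)-tP(f,\Phi)}. \]
Setting $C_\epsilon=\max(K_{\tilde\epsilon_1},K_{\tilde\epsilon_2})$ yields the stated inequalities.

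The main obstacle is verifying that the auxiliary scales $\tilde\epsilon_1,\tilde\epsilon_2$ coming from Lemma \ref{ino} depend only on $\epsilon$, not on the pair $(\theta,t)$, so that $C_\epsilon$ is genuinely uniform. For $\tilde\epsilon_1$ this is immediate from uniform continuity of $\chi$ on the compact space $T_1M$. For $\tilde\epsilon_2$, the natural construction is to take $D(\theta,\epsilon,t)=\{\eta\in T_1M:\mc{I}(\phi_s(\eta))\subset B(\phi_s(\theta),\epsilon)\text{ for all }s\in[0,t]\}$, which is automatically saturated and contained in $B(\theta,\epsilon,t)$, and then invoke Proposition \ref{lev} together with $\Phi$-invariance of $\mc{R}_0$ (so $\phi_s(\theta)\in\mc{R}_0$ throughout the orbit segment) to conclude that $\chi(D)$ contains a dynamical ball $B([\theta],\tilde\epsilon_2,t)$ whose radius $\tilde\epsilon_2$ depends only on $\epsilon$.
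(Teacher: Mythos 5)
Your argument is essentially the paper's own proof: you transfer the Gibbs property of $\nu=\chi_*\mu$ (via Propositions \ref{indu}, \ref{proy}, \ref{spe} and Lemma \ref{gibs}) through the two inclusions of Lemma \ref{ino}, using $B(\theta,\epsilon,t)\subset\chi^{-1}\chi(B(\theta,\epsilon,t))$ for the upper bound and the saturated set $D(\theta,\epsilon,t)$ for the lower bound, exactly as the paper does. The uniformity in $(\theta,t)$ of the scale $\tilde\epsilon_2$ that you flag is a genuine subtlety which the paper itself leaves implicit (Lemma \ref{ino} as stated allows $\tilde\epsilon_2$ to depend on $\theta$ and $t$), so this is an observation about the paper rather than a flaw in your argument; only note that in your sketched repair the auxiliary point produced by Proposition \ref{lev} need not lie in $\mc{R}_0$, so that step would need additional care.
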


\begin{proof}
Let $\tilde{f}:X\to \mb{R}$ be the induced potential for $f$ (Section \ref{pot}) and $\nu=\chi_*\mu$ be the unique equilibrium measure of $\tilde{f}$. From Lemma \ref{gibs} we know that $\nu$ has the Gibbs property hence for every $\epsilon>0$ there exists $C_{\epsilon}>1$ satisfying Equation \eqref{proper} with $x=[\theta]$ and $P=P(\tilde{f},\Psi)$. Since $A\subset \chi^{-1}\circ\chi(A)$ for any $A\subset T_1M$, from Lemma \ref{ino} it follows that
\begin{equation}
    \mu(B(\theta,\epsilon,t))\leq \mu(\chi^{-1}\circ\chi(B(\theta,\epsilon,t))) =\chi_*\mu(\chi(B(\theta,\epsilon,t)))\leq \nu(B([\theta],\tilde{\epsilon}_1,t)).
\end{equation}
Using this inequality and Equations \eqref{des}, \eqref{pres} and \eqref{proper} we finally obtain 
\[   \mu(B(\theta,\epsilon,t))\leq \nu(B([\theta],\tilde{\epsilon}_1,t)) \leq C_{\epsilon} e^{S_t\tilde{f}[\theta]-tP(\tilde{f},\Psi)}=C_{\epsilon}e^{S_tf(\theta)-tP(f,\Phi)}.   \]
For the second inequality, suppose $\theta\in \mathcal{R}_0$. By Lemma \ref{ino} there exists an open saturated set $D(\theta,\epsilon, t)\subset B(\theta,\epsilon, t)$. Since $D(\theta,\epsilon, t)$ is saturated, Equations \eqref{proper} and \eqref{double} imply that
\begin{equation}
    \mu(D(\theta,\epsilon,t))=\mu(\chi^{-1}\circ \chi(D(\theta,\epsilon, t)))=\chi_*\mu(\chi(D(\theta,\epsilon, t)))\geq \nu(B([\theta],\tilde{\epsilon}_2,t))\geq C_{\epsilon}^{-1}e^{S_t\tilde{f}[\theta]-tP(\tilde{f},\Psi)}.
\end{equation}
Since $S_t\tilde{f}[\theta]=S_tf(\theta)$, $P(\tilde{f},\Psi)=P(f,\Phi)$ and $D(\theta,\epsilon, t)\subset B(\theta,\epsilon, t)$ we conclude that 
\[ \mu(B(\theta,\epsilon,t))\geq \mu(D(\theta,\epsilon,t))\geq C_{\epsilon}^{-1}e^{S_tf(\theta)-tP(f,\Phi)}.\]
\end{proof}

\section{Large deviations principle and entropy density}

Let $\Psi=(\psi_t)$ be a continuous flow acting on a compact metric space $X$. We say that $\Psi$ has entropy density of ergodic measures if for any $\Psi$-invariant measure $\mu$ on $X$ there exists a sequence of ergodic measures $(\mu_n)$ on $X$ such that $\mu_n\to\mu$ in the weak$^*$ topology and $h_{\mu_n}(\psi_1)\to h_{\mu}(\psi_1)$. There is an analogue definition for the discrete case \cite{walt00}.

In the discrete case, Eizenberg-Kifer-Weiss proved entropy density of ergodic measures for $\mb{Z}^d$-shifts having the specification property \cite{eiz94}. Pfister and Sullivan extended this property to maps having an almost global product \cite{fis04}. In the continuous setting, Constantine-Lafont-Thompson showed entropy density for continuous expansive flows having a weak specification property \cite{cons20}. In particular, this holds for all transitive hyperbolic flows including geodesic flows of compact manifolds of negative curvature. Though, for rank-1 surfaces, Coudene and Schapira showed that ergodic measures cannot approximate certain invariant measures supported on flat strips \cite{cou10}. Hence, entropy density of ergodic measures does not hold in our setting. Despite this, it is still possible to approximate invariant measures not supported on strips, thus obtaining a restricted entropy density of ergodic measures.      
\begin{pro}
Let $M$ be a closed manifold without conjugate points and with visibility universal covering, and $\Phi=(\phi_t)$ be its geodesic flow. Given a $\Phi$-invariant measure $\mu$ on $T_1M$ with $\mu(T_1M\setminus\mathcal{R}_0)=0$, there exists a sequence of ergodic measures $(\mu_n)$ on $T_1M$ such that $\mu_n\to \mu$ in the weak$^*$ topology and $h_{\mu_n}(\phi_1)\to h_{\mu}(\phi_1)$. 
\end{pro}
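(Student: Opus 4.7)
The plan is to transfer the entropy density property from the factor flow $\Psi$ to the geodesic flow $\Phi$ via the semi-conjugacy $\chi$, crucially using the hypothesis that $\mu$ is concentrated on the expansive set $\mc{R}_0$, where $\chi$ restricts to a Borel bijection. First I push $\mu$ forward to $\nu=\chi_*\mu$ on $X$; this is $\Psi$-invariant and Proposition \ref{sur}(1) gives $h_\nu(\psi_1)=h_\mu(\phi_1)$. Since Proposition \ref{spe} says that $\Psi$ is expansive with the specification property, the entropy density result of Constantine--Lafont--Thompson \cite{cons20} applies to $(X,\Psi)$ and produces a sequence of ergodic $\Psi$-invariant measures $\nu_n$ on $X$ with $\nu_n\to\nu$ in the weak$^*$ topology and $h_{\nu_n}(\psi_1)\to h_\nu(\psi_1)$.

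Next I lift each $\nu_n$ to an ergodic $\Phi$-invariant measure $\mu_n$ on $T_1M$ with $\chi_*\mu_n=\nu_n$. Proposition \ref{exis} provides a $\Phi$-invariant lift $\T{\nu}_n$, which need not be ergodic. Writing its ergodic decomposition as $\T{\nu}_n=\int \sigma_\omega\, d\lambda_n(\omega)$ and pushing forward yields
\[ \nu_n=\chi_*\T{\nu}_n=\int \chi_*\sigma_\omega\, d\lambda_n(\omega), \]
a barycentric representation of $\nu_n$ in the Choquet simplex of $\Psi$-invariant probability measures on $X$. Since $\nu_n$ is an extreme point of this simplex, uniqueness of the Choquet decomposition (applied after further decomposing each $\chi_*\sigma_\omega$ into its ergodic parts) forces $\chi_*\sigma_\omega=\nu_n$ for $\lambda_n$-almost every $\omega$. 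I pick any such $\omega$ and set $\mu_n=\sigma_\omega$. Then $\mu_n$ is $\Phi$-ergodic with $\chi_*\mu_n=\nu_n$, and Proposition \ref{sur}(1) gives $h_{\mu_n}(\phi_1)=h_{\nu_n}(\psi_1)\to h_\nu(\psi_1)=h_\mu(\phi_1)$, which is the required entropy convergence.

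It remains to verify $\mu_n\to\mu$ in the weak$^*$ topology. By compactness of the space of Borel probability measures on $T_1M$ it suffices to show that every weak$^*$ accumulation point of $(\mu_n)$ equals $\mu$. Let $\tau$ be such an accumulation point. As a limit of $\Phi$-invariant measures, $\tau$ is $\Phi$-invariant, and weak$^*$ continuity of $\chi_*$ gives $\chi_*\tau=\lim\nu_n=\nu=\chi_*\mu$. The set $\mc{R}_0$ is $\chi$-saturated because $\chi^{-1}(\chi(\theta))=\mc{I}(\theta)=\{\theta\}$ for every $\theta\in\mc{R}_0$, so
\[ \tau(\mc{R}_0)=\tau(\chi^{-1}(\chi(\mc{R}_0)))=\chi_*\tau(\chi(\mc{R}_0))=\chi_*\mu(\chi(\mc{R}_0))=\mu(\mc{R}_0)=1. \]
Since $\tau$ and $\mu$ both concentrate on $\mc{R}_0$, where $\chi$ is a Borel bijection, the identity $\chi_*\tau=\chi_*\mu$ forces $\tau=\mu$.

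I expect the extraction of an \emph{ergodic} lift in the second paragraph to be the most delicate step: upgrading the invariant lift provided by Proposition \ref{exis} to an ergodic one requires invoking the uniqueness of barycentric representations in the Choquet simplex and not merely the existence of some lift. Everything else is bookkeeping built on the $\chi$-saturation of $\mc{R}_0$ and the bijectivity of $\chi$ there, which are precisely what the hypothesis $\mu(T_1M\setminus\mc{R}_0)=0$ is designed to exploit and explain why general invariant measures (those with mass on flat strips) resist this kind of approximation.
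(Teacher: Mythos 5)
Your proof is correct, and it reaches the conclusion by a route that differs from the paper's at the key step of producing ergodic lifts. The paper also pushes $\mu$ to $\nu=\chi_*\mu$, invokes Proposition \ref{spe} and \cite{cons20} to get ergodic $\nu_n\to\nu$ with converging entropies, and lifts via Proposition \ref{exis}; but instead of upgrading the lift abstractly, it argues that the lifted measures $\mu_n$ are \emph{automatically} ergodic: it first shows $\mu_n(\mc{R}_0)\in\{0,1\}$ by saturation and ergodicity of $\nu_n$, then rules out the value $0$ by a limiting argument (evaluating the weak$^*$ convergence $\nu_n\to\chi_*\mu$ on the set $\chi(\mc{R}_0)$), and finally transfers ergodicity from $\nu_n$ through the bijection $\chi|_{\mc{R}_0}$ by writing any $\Phi$-invariant set $A$ as $A\cap\mc{R}_0$ up to $\mu_n$-null sets. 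Your alternative--take the invariant lift $\T{\nu}_n$, disintegrate it into ergodic components, and use extremality of the ergodic $\nu_n$ (Bauer/Choquet uniqueness; in fact each $\chi_*\sigma_\omega$ is already ergodic since $\chi$ is a semi-conjugacy, so uniqueness of the ergodic decomposition suffices) to find an ergodic $\sigma_\omega$ with $\chi_*\sigma_\omega=\nu_n$--buys independence from whether the lift charges $\mc{R}_0$ and avoids the paper's most delicate step, namely passing $\nu_n(\chi(\mc{R}_0))\to\chi_*\mu(\chi(\mc{R}_0))$ and $\mu_{n_k}(\mc{R}_0)\to\tau(\mc{R}_0)$ under weak$^*$ convergence for a set that is not obviously a continuity set; correspondingly, you obtain $\tau(\mc{R}_0)=1$ directly from $\chi_*\tau=\chi_*\mu$ and the saturation $\chi^{-1}(\chi(\mc{R}_0))=\mc{R}_0$, which is cleaner. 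The price is invoking ergodic-decomposition/Choquet machinery, and (as in the paper) one should note that sets like $\chi(\mc{R}_0)$ and $\chi(A\cap\mc{R}_0)$ are a priori only analytic, hence universally measurable, which is enough for the measure identities you and the paper both use. The hypothesis $\mu(T_1M\setminus\mc{R}_0)=0$ enters exactly where you say it does: in identifying the accumulation point $\tau$ with $\mu$ through the bijection on $\mc{R}_0$.
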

\begin{proof}
Since $\chi$ is a semi-conjugacy it follows that $\chi_*\mu$ is a $\Psi$-invariant measure on $X$. By Proposition \ref{spe}, $\Psi$ is expansive and has the specification property. Proposition 5.5 of \cite{cons20} asserts that $\Psi$ has entropy density of ergodic measures. Thus, there exists a sequence of ergodic measures $(\nu_n)$ on $X$ such that $\nu_n\to \chi_*\mu$ weakly and $h_{\nu_n}(\psi_1)\to h_{\chi_*\mu}(\psi_1)$. By Proposition \ref{exis}, for all $n\geq1$ there exists a $\Phi$-invariant measure $\mu_n$ on $T_1M$ such that $\chi_*\mu_n=\nu_n$ and hence 
\begin{equation}\label{espa}
\mu_n(\mc{R}_0)=\mu_n(\chi^{-1}\circ\chi(\mc{R}_0))=\chi_*\mu_n(\chi(\mc{R}_0))= \nu_n(\chi(\mc{R}_0))\in \{0,1\},
\end{equation}
since $\nu_n$ is ergodic and $\chi(\mc{R}_0)$ is $\Psi$-invariant. We claim that 
\begin{equation}\label{med}
    \mu_n(\mc{R}_0)=1 \quad \text{ for every } \quad n\geq 1.    
\end{equation}
Otherwise $\mu_n(\mc{R}_0)=0$ for all $n\geq 1$. This condition and Equation \eqref{espa} lead to a contradiction:
\[ 1=\mu(\mathcal{R}_0)=\mu(\chi^{-1}\circ\chi(\mathcal{R}_0))= \chi_*\mu(\chi(\mathcal{R}_0))=\lim_{n\to \infty}\nu_n(\chi(\mathcal{R}_0))=\lim_{n\to \infty}\mu_n(\mc{R}_0)=0.  \]
We will show that $\mu_n\to \mu$ weakly. Let $\tau$ be an accumulation point of $(\mu_n)$ in the weak$^*$ topology. Thus there exists a subsequence $(n_k)$ such that $\mu_{n_k}\to \tau$ weakly. Since $\chi_*$ is weak$^*$ continuous it follows that $\nu_{n_k}=\chi_*\mu_{n_k}\to \chi_*\tau$ and hence $\chi_*\tau=\chi_*\mu$. Moreover, $\tau(\mc{R}_0)=\lim_{k\to\infty}\mu_{n_k}(\mc{R}_0)=1$ by Equation \eqref{med}. Since $\chi$ is a bijection when restricted to $\mc{R}_0$ and $\mu(\mc{R}_0)=\tau(\mc{R}_0)=1$, from $\chi_*\tau=\chi_*\mu$ we deduce that $\tau=\mu$. As $\tau$ was an arbitrary accumulation point of $(\mu_n)$ we conclude that $\mu_n\to \mu$ weakly.

We now show the ergodicity of $\mu_n$. Let $A\subset T_1M$ be any $\Phi$-invariant set. Observe that $A=(A\cap\mc{R}_0)\cup (A\cap (T_1M\setminus\mc{R}_0))$. By Equation \eqref{med} we know that $\mu_n(T_1M\setminus \mc{R}_0)=0$ for all $n\geq1$ and hence 
\[ \mu_n(A)=\mu_n(A\cap \mc{R}_0)=\mu_n(\chi^{-1}\circ\chi(A\cap \mc{R}_0))=\chi_*\mu_n(\chi(A\cap \mc{R}_0))=\nu_n(\chi(A\cap \mc{R}_0))\in\{0,1\},\]
by ergodicity of $\nu_n$ and $\Psi$-invariance of $\chi(A\cap \mc{R}_0)$. Therefore, $\mu_n$ is ergodic for all $n\geq 1$. Finally, using entropy density of $\Psi$ and Proposition \ref{sur} we obtain
\[ h_{\mu_n}(\phi_1)=h_{\chi_*\mu_n}(\psi_1)=h_{\nu_n}(\psi_1)\longrightarrow h_{\chi_*\mu}(\psi_1)=h_{\mu}(\phi_1).  \]
\end{proof}
Entropy density of ergodic measures was used by Eizenberg-Kifer-Weiss to prove level-2 large deviations of $\mb{Z}^d$ maps \cite{eiz94}. We can also use this property to show a related result.

Consider a continuous flow $\Psi=(\psi_t)$ acting on a compact metric space $X$. Denote by $\mc{M}_{\Psi}(X)$ the set of $\Psi$-invariant probability measures on $X$. Let $\nu$ be an equilibrium measure of the potential $g$ and $h:X\to \mb{R}$ be a continuous potential. We say that $\nu$ satisfies the level-1 large deviations principle for $h$ if for every $\epsilon>0$ it holds
\begin{equation}\label{lar}
    \lim_{t\to \infty}\frac{1}{t}\log \nu \left\{ x\in X: \left|\frac{1}{t}\int_0^t h(\psi_s(x))ds - \nu(h) \right|\geq \epsilon \right\}= -q(\epsilon) \quad \text{with} 
\end{equation}
\[  q(\epsilon)=  \inf_{\tau\in D_{\epsilon}} P(\Psi,g)-P_{\tau}(\Psi,g) \quad \text{ where }\quad D_{\epsilon}=\{ \nu\in\mc{M}_{\Psi}(X): |\nu(g)-\tau(g)|\geq \epsilon \}. \]
We say that $\nu$ satisfies level-2 large deviations principle if $\nu$ has level-1 large deviations principle for every continuous potential $h$. Level-2 large deviations principle is quite strong and holds for hyperbolic transitive flows \cite{kif90}. In our setting, we show that certain measures having weak pressure gap (Section \ref{uni}) satisfy level-1 large deviations principle for potentials in $\mc{H}$ (Definition \ref{hfam}).
\begin{pro}
Let $M$ be a closed manifold without conjugate points and with visibility universal covering, $f\in \mc{H}$ be a potential having the weak pressure gap and $\mu$ be the unique equilibrium measure of $f$. Then $\mu$ satisfies the level-1 large deviations principle for every $g\in \mc{H}$. 
\end{pro}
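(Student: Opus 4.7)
The plan is to transfer the level-1 large deviations principle from the quotient system $(X,\Psi,\nu)$, where $\nu=\chi_*\mu$ is the unique equilibrium measure of $\tilde{f}$, back to $(T_1M,\Phi,\mu)$ via the semi-conjugacy $\chi$. First, I would establish a level-2 LDP for $\nu$, which in particular yields a level-1 LDP for the continuous observable $\tilde{g}$. By Proposition \ref{spe} the quotient flow $\Psi$ is expansive with the specification property, and by Proposition \ref{indu} the induced potential $\tilde{f}$ is Bowen on $X$. In this setting, the classical large deviations theory for expansive flows with specification and Bowen equilibrium states (a continuous-time analogue of the Young--Kifer and Eizenberg--Kifer--Weiss theorems) gives that for every $\epsilon>0$,
\[ \lim_{t\to\infty}\frac{1}{t}\log \nu\left\{x\in X:\left|\frac{1}{t}\int_0^t \tilde{g}(\psi_s(x))\,ds - \nu(\tilde{g})\right|\geq \epsilon\right\} = -\tilde{q}(\epsilon), \]
with $\tilde{q}(\epsilon)=\inf_{\sigma\in\tilde{D}_\epsilon}\bigl(P(\tilde{f},\Psi)-P_\sigma(\tilde{f},\Psi)\bigr)$ and $\tilde{D}_\epsilon=\{\sigma\in\mc{M}_\Psi(X):|\sigma(\tilde{g})-\nu(\tilde{g})|\geq\epsilon\}$.

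Next, I would translate the deviation set from $T_1M$ to $X$. Since $g\in\mc{H}$ is constant on classes, $g(\phi_s(\theta))=\tilde{g}(\psi_s(\chi(\theta)))$ for all $\theta\in T_1M$ and $s\in\mb{R}$, and by Equation \eqref{des} $\mu(g)=\nu(\tilde{g})$. Hence the deviation set
\[ A_t^\epsilon=\left\{\theta\in T_1M:\left|\tfrac{1}{t}\textstyle\int_0^t g(\phi_s(\theta))\,ds-\mu(g)\right|\geq\epsilon\right\} \]
is precisely $\chi^{-1}(\tilde{A}_t^\epsilon)$, the $\chi$-preimage of the analogous set on $X$; in particular it is saturated, so $\mu(A_t^\epsilon)=\chi_*\mu(\tilde{A}_t^\epsilon)=\nu(\tilde{A}_t^\epsilon)$. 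Then I match the rate functions. By Proposition \ref{exis}, $\chi_*:\mc{M}_\Phi(T_1M)\to\mc{M}_\Psi(X)$ is surjective, and Equations \eqref{pr}, \eqref{preso}, \eqref{des} give $P_\tau(f,\Phi)=P_{\chi_*\tau}(\tilde{f},\Psi)$, $P(f,\Phi)=P(\tilde{f},\Psi)$ and $\tau(g)=\chi_*\tau(\tilde{g})$ for every $\tau\in\mc{M}_\Phi(T_1M)$. Consequently $\tau\in D_\epsilon$ if and only if $\chi_*\tau\in\tilde{D}_\epsilon$, so the infimum defining $q(\epsilon)$ agrees with the one defining $\tilde{q}(\epsilon)$. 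Combining these identifications with step one yields $\lim_{t\to\infty}\tfrac{1}{t}\log\mu(A_t^\epsilon)=-q(\epsilon)$, which is the desired level-1 large deviations principle for $\mu$ and $g$.

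The main obstacle will be step one: importing a sufficiently general level-2 large deviations principle for the expansive quotient flow $\Psi$ with a Bowen equilibrium state. For Bowen equilibrium states over expansive systems with specification this is classical in the discrete setting, and the continuous-time counterpart should either be available in the literature or deducible by standard adaptations of those arguments; the proof will depend on a careful citation. Once that input is in place, every remaining step reduces to routine bookkeeping with the semi-conjugacy identities already established in Sections \ref{pot} and \ref{uni}, together with the weak pressure gap ensuring $\mu$ is supported on $\mc{R}_0$ so that $\mu$ and $\nu$ are related by a measurable isomorphism on the full $\mu$-measure set where $\chi$ is a bijection.
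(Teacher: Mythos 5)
Your proposal is correct and follows essentially the same route as the paper: obtain the level-1 LDP for $\nu=\chi_*\mu$ on the quotient flow from the expansive-plus-specification literature (the paper invokes \cite{bom19}, \cite{cons20}, \cite{fis04} via entropy density), transfer the deviation sets through the semi-conjugacy using $g=\tilde{g}\circ\chi$ and $\mu(g)=\nu(\tilde{g})$, and match the rate functions via surjectivity of $\chi_*$ and the pressure identities \eqref{pr}, \eqref{preso}. Your observation that $A_{t,\epsilon}=\chi^{-1}(\tilde{A}_{t,\epsilon})$ is saturated even streamlines the paper's detour through $\mc{R}_0$, but the argument is otherwise the same.
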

\begin{proof}
Let $g\in \mc{H}$ and $\tilde{g}:X\to \mb{R}$ be its induced potential given in Proposition \ref{indu}. From Proposition \ref{proy} we know that $\chi_*\mu=\nu$ is the unique equilibrium measure of the induced potential $\tilde{f}:X\to \mb{R}$. By Proposition \ref{spe}, $\Psi$ is expansive and has the specification property, thus $\Psi$ has the entropy density of ergodic measures by Proposition 5.5 of \cite{cons20}. Applying any of the results of \cite{bom19}, \cite{cons20} or \cite{fis04}, shows that $\nu$ satisfies the level-1 large deviations for every continuous potential $h:X\to \mb{R}$. In particular, $\nu$ and $\tilde{f}$ satisfy Equation \eqref{lar} for $\tilde{g}$. We will verify this equation for $\mu$, $f$ and $g$. For every $t>0$ and $\epsilon>0$ define the sets
\[ \tilde{A}_{t,\epsilon}=\left\{ x\in X: \left|\frac{1}{t}\int_0^t\tilde{g}(\psi_s(x))ds - \nu(\tilde{g}) \right|\geq \epsilon \right\}, A_{t,\epsilon}=\left\{ \eta\in T_1M: \left|\frac{1}{t}\int_0^tg(\phi_s(\eta))ds -\mu(g)\right|\geq \epsilon \right\}. \]
Using the semi-conjugacy Equation \eqref{for} and Proposition \ref{indu}, for every $\eta\in T_1M$ we get
\[  \frac{1}{t}\int_0^t\tilde{g}(\psi_s(\chi(\eta)))ds=\frac{1}{t}\int_0^tg(\phi_s(\eta))ds.   \]
Moreover, since $\chi_*\mu=\nu$, Equation \eqref{des} implies that
\begin{equation}\label{on}
    \nu(\tilde{g})=\int_X\tilde{g}d\nu=\int_{T_1M}gd\mu=\mu(g).
\end{equation}
From these equations and surjectivity of $\chi$ it follows that $\chi(A_{t,\epsilon})=\tilde{A}_{t,\epsilon}$. Now, observe that
\[ A_{t,\epsilon}=(A_{t,\epsilon}\cap \mc{R}_0)\cup (A_{t,\epsilon}\cap(T_1M\setminus \mc{R}_0)), \quad    \tilde{A}_{t,\epsilon}=(\tilde{A}_{t,\epsilon}\cap \chi(\mc{R}_0))\cup (\tilde{A}_{t,\epsilon}\cap(X\setminus \chi(\mc{R}_0))).\]
Since $\mu(T_1M\setminus \mc{R}_0)=\nu(X\setminus\chi(\mc{R}_0))=0$ it follows that $\mu(A_{t,\epsilon})=\mu(A_{t,\epsilon}\cap \mc{R}_0)$ and $\nu(\tilde{A}_{t,\epsilon})=\nu(\tilde{A}_{t,\epsilon}\cap \chi(\mc{R}_0))$. Using these equations we deduce that
\[  \mu(A_{t,\epsilon})=\mu(A_{t,\epsilon}\cap \mc{R}_0)= \mu(\chi^{-1}\circ \chi(A_{t,\epsilon}\cap \mc{R}_0))=\chi_*\mu(\chi(A_{t,\epsilon}\cap \mc{R}_0)))=\nu(\tilde{A}_{t,\epsilon}\cap \chi(\mc{R}_0))=\nu(\tilde{A}_{t,\epsilon}). \]
From this equation and Equation \eqref{lar}, we obtain the desired relation 
\[ \lim_{t\to \infty} \frac{1}{t}\log \mu(A_{t,\epsilon})= \lim_{t\to \infty} \frac{1}{t}\log \nu(\tilde{A}_{t,\epsilon})=-q(\epsilon). \]
We now verify that $q(\epsilon)$ has the desired form. Denote by $\mc{M}_{\Phi}(T_1M)$ the set of $\Phi$-invariant probability measures on $T_1M$. For every $\epsilon>0$ consider the sets
\[  D_{\epsilon}=\{ \tau\in \mc{M}_{\Phi}(T_1M):|\tau(g)-\mu(g)|\geq \epsilon \} \quad\text{ and } \quad  \tilde{D}_{\epsilon}=\{ \omega\in \mc{M}_{\Psi}(X):|\omega(\tilde{g})-\nu(\tilde{g})|\geq \epsilon \}. \]
Using Equation \eqref{on} we see that
\[\mu(g)-\tau(g)=\int_{T_1M}gd\mu-\int_{T_1M}gd\tau=\int_{X}\tilde{g}d\chi_*\mu-\int_X\tilde{g}d\chi_*\tau =\nu(\tilde{g})-\chi_*\tau(\tilde{g}).  \]
From this and surjectivity of $\chi_*$ it follows that $\chi_*(D_{\epsilon})=\tilde{D}_{\epsilon}$. Using this and Equations \eqref{pr} and \eqref{preso} we finally conclude that
\[ q(\epsilon)= \inf_{\omega\in \tilde{D}_{\epsilon}} P(\Psi,\tilde{g})-P_{\omega}(\Psi,\tilde{g})=\inf_{\chi_*\tau \in \chi_*(D_{\epsilon})} P(\Psi,\tilde{g})-P_{\chi_*\tau}(\Psi,\tilde{g})= \inf_{\tau \in D_{\epsilon}} P(\Phi,g)-P_{\tau}(\Phi,g).  \]
\end{proof}

\section{Acknowledgments}
The author appreciates the financial support of CNPQ and FAPEMIG funding agencies during the work. Part of this work was done under the project Math-Amsud of CAPES agency. 

\bibliographystyle{plain}
\bibliography{ref}

\end{document}